\newtheorem{theorem}{Theorem}[section]
\newtheorem{coro}[theorem]{Corollary}
\newtheorem{lemma}[theorem]{Lemma}
\newtheorem{prop}[theorem]{Proposition}
\newtheorem{definition}[theorem]{Definition}
\theoremstyle{definition}
\newtheorem{Remark}[theorem]{Remark}
\newcommand{\dalpha}{d_\alpha}
\newcommand{\calpha}{c_\alpha}
\newcommand{\ualpha}{u_\alpha}
\newcommand{\valpha}{v_\alpha}
\newcommand{\ealpha}{e_\alpha}
\newcommand{\length}[1]{\lvert #1 \rvert} % length of the word #1
\newcommand{\block}[1]{p^{(#1)}} % p-block
\newcommand{\NN}{\mathbb{N}}
\newcommand{\ZZ}{\mathbb{Z}}
\newcommand{\CC}{\mathbb{C}}
\newcommand{\RR}{\mathbb{R}}
\newcommand{\OOmega}{(\Omega,T)}
\newcommand{\SL}{SL(2,\RR)}
 \DeclareMathOperator{\Aut}{Aut}
\DeclareMathOperator{\Ker}{Ker}
\title[Uniformity of locally constant cocycles]{Subshifts with leading sequences, uniformity of cocycles and spectra of Schreier graphs}
\author{Rostislav Grigorchuk}
\address{Mathematics Department, Texas A\&M University, College Station, TX 77843-3368, USA}
\email{grigorch@math.tamu.edu}
\author{Daniel Lenz}
\address{Mathematisches Institut \\Friedrich Schiller
Universit{\"a}t Jena \\07743 Jena, Germany }
\email{daniel.lenz@uni-jena.de}
\author{Tatiana Nagnibeda}
\address{Section de Math\'{e}matiques, Universit\'{e} de Gen\`{e}ve, 2-4, Rue du
Li\`{e}vre, Case Postale 64 1211 Gen\`{e}ve 4, Suisse}
\email{Tatiana.Smirnova-Nagnibeda@unige.ch}
\author{Daniel Sell}
\address{Mathematisches Institut \\Friedrich Schiller
Universit{\"a}t Jena \\07743 Jena, Germany }
\email{daniel.sell@uni-jena.de}
\begin{document}

\begin{abstract}
We introduce a class of subshifts governed by finitely many two-sided infinite words. We call these words leading sequences. We show that any locally constant cocycle over such a subshift is uniform. From this we obtain Cantor spectrum of Lebesgue measure zero for  associated Jacobi operators if the subshift is aperiodic. Our class covers all simple Toeplitz subshifts as well as all Sturmian subshifts. We apply our results to the spectral theory of Schreier graphs for uncountable families of groups acting on rooted trees.
\end{abstract}

\maketitle

%\tableofcontents

\section*{Introduction}
This article is concerned with matrix valued cocycles over
dynamical systems, with applications to spectral theory of Jacobi
operators. More specifically, we consider a uniquely ergodic
dynamical system $\OOmega$, where $\Omega$ is a compact metric
space, $T:\Omega\longrightarrow \Omega$ is a homeomorphism and there
is only one $T$-invariant probability measure on $\Omega$. It is
well-known that such systems admit a uniform ergodic theorem for
continuous functions, see e.g.  \cite{Wal2}.  Hence, for
any continuous $f :\Omega\longrightarrow \CC$, there exists $\lambda\in \CC$ with
$$\lambda = \lim_{n\to\infty} \frac{1}{n}\sum_{j=1}^n f(T^j \omega) \, ,$$
uniformly in $\omega\in\Omega$. A natural question in this context
is whether a similar result holds for matrix-valued functions if
summation is replaced by multiplication and averaging is done by a
logarithmic mean. To be more precise, to any continuous function $A: \Omega\longrightarrow
\SL$ let us associate the \emph{cocycle}
$$A(\cdot,\cdot):  \ZZ   \times\Omega \longrightarrow \SL$$
defined by
\begin{equation*}
A(n,\omega) := \left\{\begin{array}{r@{\quad:\quad}l}
 A(T^{n-1} \omega)\cdots A(\omega)  & n>0\\
 Id & n=0\\
A^{-1} (T^n \omega) \cdots A^{-1} (T^{-1}\omega)  & n < 0.
\end{array}\right.
\end{equation*}
Following \cite{Fur} (cf. \cite{Wal2} as well), we call the
continuous function $A: \Omega \longrightarrow \SL$ \emph{uniform} if the limit
$$\Lambda = \lim_{n \to \infty} \frac{1}{n} \log \| A(n,\omega)\|$$
exists for all $\omega \in \Omega$ and the convergence is uniform on
$\Omega$. If $A$ is uniform, we also call the associated cocycle
$A(\cdot,\cdot)$ uniform. With these definitions, the question about
existence of ergodic averages uniformly in $\omega$ becomes the
question about uniformity of all continuous $\SL$-valued functions,
or equivalently, of their associated cocycles. If $(\Omega,T)$ is
\emph{minimal} (i.e.  $\{T^n\omega: n\in\ZZ\}$ is dense in $\Omega$
for each $\omega\in\Omega$) the definition can be simplified. Then,
uniform convergence already follows if the limit exists for all
$\omega\in \Omega$. This observation has been shared with us by
Benjamin Weiss in a private communication, and we present its proof
at the end of Section \ref{s:background}.

Existence or non-existence of uniform cocycles  has been studied by
various people, e.g. in \cite{Wal2,Her1,Fur, Len3}. In fact, Walters
raises in \cite{Wal2} the question whether  every uniquely ergodic
dynamical system with non-atomic invariant measure $\mu$ admits a
non-uniform cocycle. Walters presents a class of examples admitting
non-uniform cocycles based on results of Veech \cite{Vee}. He also
discusses a further class of examples, namely suitable irrational
rotations, for which non-uniformity was shown by Herman \cite{Her1}.
Furman carries out in \cite{Fur} a careful study of uniformity of cocycles. For strictly ergodic dynamical systems, he characterizes
uniform cocycles with positive $\Lambda$ in terms of uniform
diagonalizability, which in turn is equivalent to uniform
hyperbolicity of the cocycle. In this way the question of existence
of non-uniform cocycles becomes the question of  existence of
hyperbolic but not uniformly hyperbolic cocycles.

A particular class of dynamical system of interest are subshifts.
For them, there exists a finite set $\mathcal{A}$ such that $\Omega$
is a closed subset of $\mathcal{A}^{\ZZ}$ (equipped with product
topology),  which is invariant under the shift $$T :
\mathcal{A}^{\ZZ}\longrightarrow\mathcal{A}^{\ZZ}, (Ts)(n) =
s(n+1).$$ For subshifts, one may first restrict attention to those
$A:\Omega\longrightarrow \SL$ which are locally constant (i.e. admit
an  $N>0$ such that $A(\omega)$ only depends on $\omega(-N)\ldots
\omega (N)$). We will then also call  the associated cocycle
locally constant.

Uniformity of locally constant cocycles is relevant for applications
to the spectral theory of self-adjoint operators arising in the
study of aperiodic order. Such operators have attracted considerable
attention in recent decades, see e.g.\ the surveys
\cite{Sut,Dam,DEG} for more information on such operators,  and
\cite{TAO,BMoo,KLS} for background on aperiodic order. In
particular, it was shown in \cite{Len}, that uniformity of locally
constant cocycles implies Cantor spectrum of Lebesgue measure zero
for discrete Schroedinger operators associated with aperiodic
subshifts. As a consequence, all aperiodic linearly repetitive
subshifts yield Cantor spectrum of Lebesgue measure zero. In
\cite{DL} this was generalized to the substantially larger class of
aperiodic subshifts  satisfying the so-called Boshernitzan
condition. These results do not only hold for discrete Schrödinger
operators but also for the larger class of Jacobi operators
\cite{BP}. By now it seems fair to say that establishing validity of
Boshernitzan condition has become the standard method of proving
Cantor spectrum in this context. Quite remarkably, \cite{LiuQu} gave
results showing Cantor spectrum for Schrödinger operators
associated to certain subshifts, which do not satisfy Boshernitzan
condition. These subshifts are \emph{simple Toeplitz} subshifts. The
methods used in \cite{LiuQu} are involved and tailored to deal with
discrete Schrödinger operators. In particular, \cite{LiuQu} leaves
open the question whether uniformity of all locally constant
cocycles actually holds for their subshifts. In fact, it is not even
clear that the methods of \cite{LiuQu} carry over to Jacobi
operators and their associated locally constant cocycles.

A main result of the present article, Theorem
\ref{thm-simple-toeplitz-lsc},  shows uniformity of all locally
constant cocycles over simple Toeplitz subshifts. In particular, we
provide  the first examples of subshifts without Boshernitzan
condition which satisfy uniformity of \emph{all} locally constant
cocycles. As a consequence we obtain Cantor spectrum of Lebesgue
measure zero for aperiodic  Jacobi operators associated to simple
Toeplitz subshifts.

The application to Jacobi operators  is particularly relevant as it
allows us to apply our results to a problem from spectral graph
theory. Indeed, as has recently been observed in \cite{GLN},
Laplacians on certain infinite graphs arising naturally in the
theory of groups of intermediate growth can be realized as Jacobi
operators on subshifts. In \cite{GLN}, the important example of the
so-called ``first Grigorchuk's group\footnote{This is how the group
is generally known and how we refer to it, in spite of the first
author's reluctance.}", introduced in \cite{Gr80}, was studied from
this viewpoint. The corresponding subshift turned out to be defined
by a primitive substitution and thus linearly repetitive. Hence the
Cantor spectrum of Lebesgue measure zero holds for the anisotropic
Laplacians on the infinite Schreier graphs of this group.

In fact, this group belongs to an uncountable family of groups
\cite{Gr84} that are non-isomorphic and even non-quasi-isometric,
but nevertheless share many properties, in particular, they all act
by automorphisms on the infinite binary tree and are of intermediate
growth, i.e., have word growth strictly between polynomial and
exponential. It is then natural to ask (see \cite{GLN_Survey,GLN})
whether the spectral result from \cite{GLN} holds for other groups
in the family, and even to members of other related uncountable
families of groups, so called spinal groups. Our second main result
(see Corollary~7.2 and Section~7 in general) shows that this is
indeed the case. The part of the proof that realizes the Laplacians
on Schreier graphs as Jacobi operators on subshifts carries over
from \cite{GLN}. However, the subshifts that arise will no longer be
linear repetitive, and in general will not even satisfy the
Boshernitzan condition. However, we show that they are all simple
Toeplitz subshifts, and therefore our main result applied and yields
 Cantor spectrum of Lebesgue measure zero in the aperiodic
(anisotropic) case.

A few words on methods may be in order. The basic task in proving
uniformity of cocycles is to provide lower bounds on the growth of
products of matrices. For locally constant $A$ this amounts to
providing lower bounds on growth along \emph{all finite words}.
Inspired by the considerations for Sturmian subshifts in
\cite{Len4}, we introduce here  a new method to achieve this. This
method may be of interest in itself (beyond the applications given
in the main two results). It  relies on proving growth along
\emph{finitely many infinite words}, which control the whole
subshift in a meaningful way. We call these infinite words
\emph{leading sequences} and call the subshifts admitting them
\emph{subshifts satisfying the leading sequence condition} (LSC). On the
conceptual level, putting forward this class of subshifts is a key
insight of the article. The corresponding technical result claims uniformity of locally constant cocycles over (LSC) subshifts, see Theorem~\ref{l:main-technical}. We show that this class contains all
simple Toeplitz subshifts as well as all Sturmian subshifts.

%As in \cite{GLN} a key element in the our investigation is to use
%the theory of  Jacobi- or Schroedinger operators associated to
%dynamical systems to treat Lapacians on Schreier graphs. In passing
%we   note that this idea can   can not only be applied in the
%situation of dynamical system  of low complexity (such as Toeplitz
%subshifts).  Indeed, it has recently been used in the investigation
%of spectra related to the lamplighter group \cite{GS}.

The article is organized as follows. In Section~\ref{s:background} we recall some basic notation and results concerning subshifts and cocycles. In Section~\ref{s:subshifts} we introduce the leading sequence condition (LSC) for subshifts. We discuss general properties of (LSC) subshifts and prove the main technical result: uniformity of locally constant cocycles on (LSC) subshifts (Theorem~\ref{l:main-technical}). In Section~\ref{s:spectral} we derive Cantor spectrum for Jacobi operators associated to aperiodic (LSC) subshifts. In Section~\ref{s:combinatorial} we provide combinatorial conditions ensuring (LSC). Equipped with these combinatorial conditions, we then show that simple Toeplitz subshifts and Sturmian subshifts satisfy condition (LSC) in Section~\ref{s:simple} and Section~\ref{s:sturmian} respectively. Finally, Section~\ref{s:family} contains the application of our results to the study of spectra of Laplacians on infinite Schreier graphs of Grigorchuk groups and spinal groups acting on the infinite binary tree. The material of the Sections~\ref{s:subshifts} to \ref{s:sturmian} constitutes a part of the PhD thesis of one of the authors \cite{Sell-PhD}.

\medskip

\bigskip

{\small \textbf{Acknowledgements.} R.~G. and D.~L. gratefully acknowledge hospitality of the Department of Mathematics of the University of Geneva on various occasions during the last five years. The work of R.~G. is supported by the Simons Foundation through Collaboration Grant 527814. R.~G. and T.~N. gratefully acknowledge support of the Swiss National Science Foundation and of the grant of the Government of the Russian Federation No 14.W03.31.0030. The work of D.~S. is supported by a PhD scholarship from \textit{Landesgraduiertenstipendium - Th\"uringen}. D.~S. would also like to express his gratitude for an invitation to the Department of Mathematics of the University of Geneva in 2017. }

\section{Background on subshifts and uniform cocycles}\label{s:background}
In this section we recall some background on our main actors. These
are subshifts over a finite alphabet and uniform cocycles.

\bigskip

Let $\mathcal{A}$ be a finite set called the \emph{alphabet}. The
elements of the free monoid $\mathcal{A}^* =\cup_{n=0}^\infty
\mathcal{A}^n$ are denoted as \emph{finite words} over
$\mathcal{A}$, where $\mathcal{A}^0 = \{\epsilon\}$ with the
\emph{empty word} $\epsilon$. We  will freely use standard
notation concerning words (see e.g. \cite{Lot}). In particular, we define the \emph{length}
of a word $v$ by $|v| = n$ if $v\in\mathcal{A}^n$ for some
$n\in\NN$. Moreover, we let the concatenation $x y$ of $x =
x(1)\ldots x(N)$ and $y=y(1)\ldots y(M)$
  be given by $x y =x(1)\ldots x(N) y(1)\ldots y(M)$ and call
  $x$  then a \emph{prefix} of $xy$.

A pair $(\Omega,T)$ is called a \emph{subshift} over
$\mathcal{A}$, if $\Omega$ is a closed subset of $\mathcal{A}^{\ZZ}$
(with product topology) and invariant under the shift $T:
\mathcal{A}^{\ZZ}\longrightarrow \mathcal{A}^{\ZZ}$, $(T s)(n) :=
s(n+1)$.  Whenever $(\Omega,T)$ is a subshift over $\mathcal{A}$ and $v$ is a
word of length $n$ over $\mathcal{A}$, we say that $v$ \emph{occurs} in $\omega\in\Omega$
at $k\in\ZZ$ if
$$v = \omega (k)\ldots \omega (k+n-1).$$ We denote the set of all
words of length $n$ occurring in $\omega\in \Omega$  by
$\mathcal{W}(\omega)_n$ and define the set of all finite words associated to $(\Omega,T)$ by
$$\mathcal{W}(\Omega):=\bigcup_{n\in \NN,\omega\in\Omega} \mathcal{W}(\omega)_n.$$

As is well known, both minimality and unique ergodicity can be
characterized via $\mathcal{W}(\Omega)$ (see for example Section~5.1
in \cite{PFogg}). More specifically, the subshift is minimal if
$\mathcal{W}(\Omega)_n =\mathcal{W}(\omega)_n$ for any
$\omega\in\Omega$ for any $n\in\NN$. A subshift is uniquely ergodic
if the limit
$$\lim_{n\to\infty} \frac{\sharp_v \omega(1)\ldots\omega(n)}{n}=:\nu(v)$$
exists uniformly in $\omega\in \Omega$ for any
$v\in\mathcal{W}(\Omega)$. Here, $\sharp_v \omega (1)\ldots\omega
(n)$ denotes the number of occurrences of $v$ in $\omega(1)\ldots
\omega (n)$ i.e. the cardinality of the set
$$\{k\in\{1,\ldots, n-|v|+1\} : \omega(k)
\ldots \omega(k +|v|-1) = v\}.$$ The term $\nu(v)$ is then called
the \emph{frequency} of $v$ and uniformity of the limit above is
referred to as \emph{uniform existence of frequencies}.

For a finite word $w= w(1)\ldots w(n)$ we define the reflected word
$w^R$ via $w^R = w(n) \ldots w(1)$. Similarly, for a one-sided
infinite word $\eta : \NN\longrightarrow \mathcal{A}$ we denote by
$\eta^R$ the one-sided infinite word arising by reflecting $\eta$
(i.e. $\eta^R :-\NN \longrightarrow \mathcal{A}, \eta^R (n)=\eta
(-n)$).  We write $|$ to denote the position of the origin. More
specifically for an  $\omega : \ZZ\longrightarrow \mathcal{A}$ we
write
$$\omega =  \varrho|\eta $$
whenever $\eta : \NN\cup\{0\} \longrightarrow \mathcal{A}$ agrees
with  the restriction of $\omega$ to $\NN\cup \{0\}$ and $\varrho :
-\NN\longrightarrow \mathcal{A}$ agrees with the restriction of
$\omega$ to $-\NN$. A function $f$ from a subshift is called
\emph{locally constant} if there exists an $N\in \NN$ such that
\begin{equation*}
f(\omega)= f(\varrho), \;\:\mbox{whenever}\:\; \omega(-N) \ldots
\omega(N) = \varrho(-N) \ldots \varrho(N).
\end{equation*}
Clearly, any locally constant function into a topological space is
continuous. As mentioned already in the introduction our main
concern are locally constant functions $A : \Omega\longrightarrow
\SL$. Any such $A$ gives rise to a cocycle and we will also call
this cocycle locally constant. For our subsequent dealing with
locally constant cocycles we include two simple observations. One
observation shows that we can 'chop off' finite pieces of cocycles
without changing the exponential behaviour, the other is that we can
take inverses without changing norms.

\begin{prop}\label{p:nutzlich} Let $(\Omega,T)$ be a subshift and $A :
\Omega\longrightarrow \SL$ locally constant. Then the following
holds:

\begin{itemize}
\item[(a)] $\|A(-n,\omega)\| = \|A(n,T^{-n} \omega)\|$ for all $n\in \NN$ and
$\omega\in \Omega$.

\item[(b)] There exists for any $ N \in \NN $ a $c(N)>0$ with
\[ \log\|A(n-N,  T^N \omega)\|   - c(N) \leq \log\|A(n,\omega)\| \leq \log\|A(n-N, T^N  \omega)\| + c(N) \]
for all $\omega\in\Omega$ and all $ n \in \ZZ $.
\end{itemize}

\end{prop}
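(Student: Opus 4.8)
The plan is to reduce both statements to two elementary facts: the cocycle identity $A(m+n,\omega) = A(m, T^n\omega)\,A(n,\omega)$, valid for all $m,n\in\ZZ$ directly from the definition of $A(\cdot,\cdot)$, and the observation that every $M\in\SL$ satisfies $\|M\| = \|M^{-1}\|$. The latter holds because the two singular values of such an $M$ are $\sigma$ and $\sigma^{-1}$ for some $\sigma\geq 1$ (their product being $\det M = 1$), so that $\|M\| = \sigma = \|M^{-1}\|$. I would first record the cocycle identity, checking the three sign regimes in the definition of $A(n,\omega)$; the only mildly delicate point is the bookkeeping of orders and inverses for negative indices, but this is purely formal.

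For part (a), I would unwind the definition of $A(-n,\omega)$ for $n\in\NN$ and of $A(n, T^{-n}\omega)$ and observe that they are mutually inverse. Concretely,
\[ A(-n,\omega) = A^{-1}(T^{-n}\omega)\cdots A^{-1}(T^{-1}\omega), \]
whose inverse, obtained by reversing the order of the factors and inverting each, is exactly
\[ A(T^{-1}\omega)\cdots A(T^{-n}\omega) = A(n, T^{-n}\omega). \]
Hence $A(-n,\omega) = A(n, T^{-n}\omega)^{-1}$, and applying $\|M\| = \|M^{-1}\|$ to $M = A(n,T^{-n}\omega)$ gives the claim.

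For part (b), I would apply the cocycle identity with the splitting $n = (n-N) + N$ to get
\[ A(n,\omega) = A(n-N, T^N\omega)\, A(N,\omega) \]
for all $n\in\ZZ$. Since $A$ is locally constant and $\Omega$ is compact, $A$ takes only finitely many values, so the matrix $A(N,\omega)$, being a product of $N$ uniformly bounded factors in $\SL$, satisfies $\|A(N,\omega)\| \leq M_N$ for some finite constant $M_N\geq 1$ independent of $\omega$; moreover $A(N,\omega)\in\SL$, so $\|A(N,\omega)^{-1}\| = \|A(N,\omega)\| \leq M_N$ as well. Submultiplicativity of the operator norm then yields $\|A(n,\omega)\| \leq \|A(n-N, T^N\omega)\|\,M_N$, while writing $A(n-N, T^N\omega) = A(n,\omega)\,A(N,\omega)^{-1}$ gives the reverse inequality $\|A(n-N,T^N\omega)\| \leq \|A(n,\omega)\|\,M_N$. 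Setting $c(N) := \log M_N$ (increasing it slightly if one insists on strict positivity) and taking logarithms produces the stated two-sided bound, uniformly in $\omega\in\Omega$ and $n\in\ZZ$.

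I expect no genuine obstacle here: everything rests on the two facts above together with submultiplicativity and compactness. If a difficulty is to be singled out, it is only the careful verification of the cocycle identity and of the identity $A(-n,\omega) = A(n,T^{-n}\omega)^{-1}$ across the sign cases of the definition, which is a matter of tracking the order and inverses of the matrix factors rather than of any real mathematical content.
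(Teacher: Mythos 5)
Your proposal is correct and follows essentially the same route as the paper: part (a) from the definition of $A(\cdot,\cdot)$ together with $\|B\|=\|B^{-1}\|$ on $\SL$, and part (b) from the decomposition $A(n,\omega)=A(n-N,T^N\omega)\,A(N,\omega)$, the uniform bound $\sup_\omega\log\|A(N,\omega)\|<\infty$ (compactness plus continuity), and submultiplicativity in both directions. The only difference is that you spell out the unwinding of the definitions and the singular-value argument that the paper leaves implicit.
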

\begin{proof} (a) This is a direct consequence of the definition of $A(\cdot,\cdot)$ and
$\|B\| =
\|B^{-1}\|$ for any $B\in \SL$.

\smallskip

(b) By continuity of $A$ we easily infer
$$c(N):=\sup_{\omega\in\Omega} \{\log \|A(N,\omega)\| \}<\infty.$$ Since $ A( n , \omega ) = A( n-N , T^N \omega ) A( N , \omega ) $, the desired statement follows now from the inequality
$$\frac{1}{\|D^{-1}\|} \|C\| \leq \|C D\|\leq \|D\| \|C\|$$
 for matrices $C,D\in \SL$.
\end{proof}

We finish this section by discussing how the definition of
uniformity of cocycles can be simplified in the case of minimal
dynamical systems. More specifically, in that case uniformity of
convergence follows once convergence  holds for all elements of the
dynamical system.  We learned this result  from Benjamin Weiss. The
result and its proof do not seem to have appeared in print and with
his kind permission we include it here.
\begin{theorem} Let $(\Omega,T)$ be a minimal dynamical system and $f_n :\Omega\to \RR$ be continuous functions for $n\in \NN$ with
$$ f_{n+m} (\omega) \leq f_n (\omega) + f_m (T^n \omega)$$
for all $\omega\in \Omega$ and $n,m\in \NN$. If $f_n (\omega)/n \to
\phi (\omega)$ for all $\omega\in \Omega$ then $\phi$ is constant
and the convergence is uniform.
\end{theorem}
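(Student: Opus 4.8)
The plan is to prove constancy of $\phi$ first, by a soft Baire-category argument, and then to split uniformity into an easy upper half, obtained by concatenating local bounds along orbits, and a hard lower half. Throughout write $g_n := \tfrac1n f_n$, so that $g_n \to \phi$ pointwise and each $g_n$ is continuous. The only consequence of subadditivity I will use at the start is the \emph{subinvariance} of $\phi$: dividing $f_{n+1}(\omega) \le f_1(\omega) + f_n(T\omega)$ by $n$ and letting $n \to \infty$ gives $\phi(\omega) \le \phi(T\omega)$ for every $\omega$. Thus $\phi$ is nondecreasing along every forward orbit and nonincreasing along every backward orbit.

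To get constancy, I would use that $\phi$, being a pointwise limit of the continuous functions $g_n$, is of Baire class one; hence its set $C$ of continuity points is a dense $G_\delta$ in the compact (Baire) space $\Omega$. Fix $\omega_0 \in C$. By minimality every forward orbit is dense, so for any $\omega_1 \in C$ there are times $k_j \to \infty$ with $T^{k_j}\omega_0 \to \omega_1$; continuity of $\phi$ at $\omega_1$ gives $\phi(T^{k_j}\omega_0) \to \phi(\omega_1)$, while the full sequence $\phi(T^k\omega_0)$ is nondecreasing and hence converges to its supremum. These two limits coincide, so $\phi$ takes a single (finite) value $c := \phi(\omega_0)$ at \emph{every} point of $C$. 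For an arbitrary $\omega$, the nondecreasing sequence $\phi(T^k\omega)$ has a subsequence converging to $c$ (density of the forward orbit and a continuity point), forcing its limit to be $c$ and hence $\phi(\omega) \le c$; applying the same reasoning to the backward orbit (minimal as well, nonincreasing sequence) gives $\phi(\omega) \ge c$. Therefore $\phi \equiv c$ on all of $\Omega$.

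For the \emph{uniform upper bound} I would exploit that subadditivity concatenates upper bounds. Given $\varepsilon > 0$, for each $\omega$ choose $N(\omega)$ with $f_{N(\omega)}(\omega) < (c+\varepsilon)N(\omega)$; by continuity this persists on an open neighbourhood $U_\omega$, and compactness yields a finite subcover with block lengths bounded by some $M$. For any $\omega$ and any large $n$, tile $[0,n)$ greedily by such good blocks followed by a remainder of length less than $M$; iterating $f_{a+b}(\omega) \le f_a(\omega) + f_b(T^a\omega)$ and bounding the remainder by $\max_{j < M}\sup_\omega |f_j|$ gives $f_n(\omega) \le (c+\varepsilon)n + C_M$ with $C_M$ independent of $n$ and $\omega$. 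Hence $\limsup_n \sup_\omega g_n \le c$, i.e. $g_n \le c + o(1)$ uniformly.

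The main obstacle is the \emph{uniform lower bound} $\liminf_n \inf_\omega g_n(\omega) \ge c$, and the difficulty is structural: subadditivity produces lower bounds on the short pieces but never on the long product, so no purely local concatenation can work, and the error terms in the reverse estimate $f_n(\omega) \ge f_{n+p}(\omega) - f_p(T^n\omega)$ accumulate with the number of blocks. My plan here is to average rather than concatenate. Using the standard covering inequality $f_n(\omega) \le \tfrac1m \sum_{k=0}^{n-1} f_m(T^k\omega) + C(m)$, dividing by $n$ and letting $n \to \infty$ shows that for the continuous function $h_m := \tfrac1m f_m$ one has $\liminf_L \tfrac1L \sum_{k=0}^{L-1} h_m(T^k\omega) \ge c$ along every orbit, while the upper half gives $h_m \le c + o(1)$. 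The deficit set $\{h_m < c - \delta\}$ must then be visited with small frequency, and minimality enters decisively, as every nonempty open set is syndetically visited (bounded return times), so a genuine persistent deficit would contradict the averaged lower bound together with $\phi \equiv c$. I expect the delicate point to be exactly this last transfer: for a merely minimal, not necessarily uniquely ergodic, system the syndeticity gives only a positive, possibly tiny, visit frequency, so closing the argument should require combining it with the subadditive variational principle $\overline{\Lambda} = \sup_\mu \lim_n \tfrac1n \int f_n \, d\mu$ and the observation that, since $g_n \to c$ everywhere, Kingman's theorem forces $\lim_n \tfrac1n \int f_n \, d\mu = c$ for every invariant measure $\mu$; keeping the slack in the upper bound small relative to the return-time scale, uniformly in $n$, is where I anticipate the real work of the proof to lie.
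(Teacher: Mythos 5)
Your constancy argument and your uniform upper bound are both correct, and the former is a genuinely different route from the paper's, which never uses that $\phi$ is of Baire class one nor the subinvariance $\phi \le \phi\circ T$. But the proof is incomplete exactly where you say it is, and the gap is real, not merely technical. For the lower bound, your frequency argument gives: every orbit visits the open deficit set $D_{m,\delta}:=\{\omega: f_m(\omega)/m < c-\delta\}$ with upper density at most $\varepsilon_m/(\delta+\varepsilon_m)$, where $\varepsilon_m:=\max_\omega f_m(\omega)/m - c\to 0$; while minimality gives, if $D_{m,\delta}\neq\emptyset$, visits with lower density at least $1/K(m,\delta)$ for some syndeticity constant $K(m,\delta)$ of that set. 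These two bounds never collide: $K(m,\delta)$ depends on $m$ through the set $D_{m,\delta}$, which may shrink as $m\to\infty$, so nothing prevents $K(m,\delta)$ from growing faster than $\delta/\varepsilon_m$. The variational-principle patch cannot repair this. Kingman plus $\phi\equiv c$ does give $\lim_n \frac{1}{n}\int f_n\,d\mu = c$ for every invariant $\mu$, but the subadditive variational principle is a statement about the supremum, $\lim_n\max_\omega f_n(\omega)/n=\sup_\mu\lim_n\frac{1}{n}\int f_n\,d\mu$; the corresponding infimum identity is false for general subadditive sequences (non-uniform $\SL$-cocycles over uniquely ergodic systems are counterexamples), because invariant measures see only $\mu$-a.e.\ points, whereas $\min_\omega f_n(\omega)/n$ may be attained along orbit segments invisible to every invariant measure. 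Any correct proof must invoke the everywhere-existence hypothesis at precisely those exceptional points, and your outline has no mechanism for doing so.

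The paper's part (I) supplies exactly this missing mechanism, by a Baire-category argument dual to your constancy step. Set $A:=\liminf_n\min_\omega f_n(\omega)/n$ and, for $\alpha>A$, let $E_\alpha^n:=\{\omega:\inf_{k\ge n}f_k(\omega)/k<\alpha\}$; these sets are open. Density is where minimality and subadditivity cooperate: given a nonempty open $W$, minimality yields $\Omega=\bigcup_{j=0}^N T^jW$; choosing $m$ large with $a_m:=\min_\omega f_m(\omega)/m<\alpha$, the set $V:=\{\omega: f_m(\omega)/m<(a_m+\alpha)/2\}$ is open and nonempty, so some $x\in W$ has $T^jx\in V$ with $j\le N$, and then $f_{m+j}(x)\le f_j(x)+f_m(T^jx)$ turns the near-minimal value at $T^jx$ into $f_{m+j}(x)/(m+j)<\alpha$, i.e.\ $x\in E_\alpha^n$. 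On the residual intersection, the hypothesis that the limit exists gives $\phi\le A$ there, and combined with your $\phi\equiv c$ this is the desired $\liminf_n\min_\omega f_n(\omega)/n\ge c$. The one idea your proposal is missing is this transport step: subadditivity pulls a near-minimal average from a point deep along the orbit back into an arbitrary prescribed open set, making the set of points admitting infinitely many near-minimal averages residual, and only there is pointwise convergence invoked.
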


The \textit{proof} is split in two parts:
\begin{itemize}
\item[(I)] Let $a_n:=\min\{ f_n (\omega) / n: \omega\in \Omega\}$ and $A:=\liminf a_n$.  Then, $\phi (\omega)\leq A$ for a dense $G_\delta$ set of $\omega$'s.
\item[(II)]
Let $b_n :=\max\{ f_n (\omega) / n: \omega\in \Omega\}$ and
$B:=\limsup b_n$.  Then, $\phi (\omega)\geq B$ for a dense
$G_\delta$ set of $\omega$'s.
\end{itemize}
It is clear that the theorem follows from $(I)$ and $(II)$. It
remains to show $(I)$ and $(II)$.

\textit{Proof of $(I)$:}  Fix $\alpha >A$ and consider
$$ E_\alpha^n :=\{\omega : \inf_{k\geq n} f_k (\omega)/ k < \alpha\}.$$
Then, $E_\alpha^n$ is open for any $n\in \NN$. It remains to show
that each $E_\alpha^n$  is dense. (Then, the statement will follow
from the Baire category theorem by taking intersections over $n$ and
suitable sequences $\alpha$ converging to $A$.) To show denseness of
$E_\alpha^n$ we let $W$ be an open set in $X$.  By minimality there
exists an $N$ such that $\cup_{j=0}^N T^j W = X$. We now choose $m$
sufficiently large with $a_m < \alpha$. Then,
$$V:=\{\omega\in\Omega : f_m (\omega)/m < \frac{a_m + \alpha}{2}\}$$
is open and non-empty (by definition of $a_m$). Thus, we can find
$x\in W$ and $j\in \{ 0,\ldots, N \}$ with  $T^j x \in V$. By
subadditivity we  then have
$$ \frac{f_{m+j} (x)}{m + j} \leq \frac{f_{j} (x)}{j}\frac{j}{m+ j} + \frac{f_m (T^j x)}{m} \frac{ m }{m+j}< \alpha.$$
Here, the last inequality follows from the definition of $V$ and the
largeness of $m$. This, shows $x\in E_\alpha^n$. As $W$ was
arbitrary the desired denseness statement holds.

\medskip

\textit{Proof of $(II)$:}  The condition on the $f_n$ easily yields
$$ f_{m-n} (\omega)   \geq - f_n (T^{-n} \omega) + f_m (T^{-n} \omega)$$
for all $m,n$ with $n \leq m$. We can now consider $\beta < B$,
$$ F_\beta^n :=\{\omega : \sup_{k\geq n} f_k (\omega)/k > \beta\}$$
and use $\cup_{j=0}^N T^{-j} W = X$  and
$$ U:=\{\omega : f_m (\omega)/m > \frac{b_m + \beta}{2}\}$$
to mimick the proof of $(I)$.

\medskip
This finishes the proof of the theorem. \hfill \qed

\medskip

As a consequence of the previous theorem a cocycle $A$ over a
minimal dynamical system $(\Omega,T)$ is uniform if and only if
$$\lim_{n\to \infty}\frac{1}{n} \log\|A(n,\omega)\|$$ exists for all
$\omega\in\Omega$.

\section{Subshifts satisfying (LSC) and the main (technical) result}\label{s:subshifts}
In this section we introduce the class of subshifts that is the main
concern in this article, discuss  some of their basic properties and
state our main technical result for these subshifts.

\bigskip

Consider a subshift $(\Omega,T)$.  The subshift is said to satisfy
the \emph{combinatorial leading sequence condition}   if there
exists a natural number $r\in \NN$  and finitely many $\omega^{(j)} \in\Omega$,
$j=1,\ldots, r$, such that the following holds:
\begin{itemize}
\item[($\alpha$)] There exists $N\in\NN$   with
$$\mathcal{W}(\Omega)_n  = \bigcup_{j=1}^r  \{\omega^{(j)} (-k+1 )\ldots
\omega^{(j)}(-k + n) : k = 0, \ldots, n\}$$
 for all $n\in \NN$ with $n\geq N$.
\end{itemize}
In this case the words $\omega^{(j)}$, $j=1,\ldots, r$, are called
the \emph{leading words} or \emph{leading sequences} of the
subshift. Clearly, $(\alpha)$ is a condition concerning
combinatorics on words.  A subshift satisfying the combinatorial
leading sequence condition with leading words $\omega^{(j)}$,
$j=1,\ldots, r$, is said to satisfy the \emph{cocycle leading
sequence condition} if, for every locally constant function $A : \Omega\longrightarrow \SL$, the following two statements holds:
\begin{itemize}
\item[($\beta$)]{For every $j\in \{1,\ldots, r\}$ the limits
$$\lim_{n\to \pm \infty} \frac{\log\|A(n,\omega^{(j)})\|}{|n|} $$
exist. Moreover, all limits have the same value.}

\item[($\gamma$)]{For every $j\in \{1,\ldots,r\}$ and every $v\in \RR^{2} \setminus \{ 0 \} $, at most one of the limits $ \lim_{n \to \pm \infty} \frac{1}{\lvert n \rvert}\log\|A(n,\omega^{(j)})v\| $ is negative.}
\end{itemize}
The conditions $(\beta), (\gamma)$ involve cocycles and are -
apriori - hard to check. In  Section \ref{s:combinatorial} we will
provide sufficient combinatorial conditions for validity of
$(\beta)$ and $(\gamma)$. These sufficient conditions will be shown
to hold in the case of simple Toeplitz subshifts and Sturmian
subshifts in Section \ref{s:simple} and Section \ref{s:sturmian}.

\begin{definition}[Leading sequence condition (LSC)]
A subshift satisfying $(\alpha),(\beta)$ and $(\gamma)$ is said to
satisfy the \emph{leading sequence condition} (LSC).
\end{definition}

\bigskip

We next gather some simple properties of subshifts satisfying (LSC).

\begin{prop} Let $(\Omega,T)$ be a subshift satisfying (LSC) with leading words $\omega^{(j)}$, $j=1,\ldots, r$. Then,
the following holds:\\
\\ (a) The subshift is uniquely ergodic. \\
\\ (b) The inequality $\sharp \mathcal{W}(\Omega)_n\leq r (n+1) $ holds for
all $n\in\NN$ larger than a suitable $N\in\NN$.
\end{prop}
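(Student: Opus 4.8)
The plan is to settle (b) by a direct count from $(\alpha)$, and to prove (a) by reducing the uniform existence of frequencies to the behaviour of the additive cocycle $n\mapsto S_n\chi_v$ along the finitely many leading sequences, where $(\beta)$ applies, and then propagating this to all of $\Omega$ by means of $(\alpha)$. For (b) I would simply read off the cardinality: for each fixed $j$ the set $\{\omega^{(j)}(-k+1)\ldots\omega^{(j)}(-k+n):k=0,\ldots,n\}$ has at most $n+1$ elements, since $k$ ranges over the $n+1$ values $0,\ldots,n$. Taking the union over $j=1,\ldots,r$ in $(\alpha)$ therefore gives $\sharp\mathcal{W}(\Omega)_n\leq r(n+1)$ for all $n\geq N$, which is the assertion.

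For (a), recall that unique ergodicity amounts to the uniform existence of the frequencies $\nu(v)$ for every $v\in\mathcal{W}(\Omega)$. Fix such a $v$, let $\chi_v\colon\Omega\to\{0,1\}$ be the locally constant indicator of $\{\omega(1)\ldots\omega(|v|)=v\}$, and set $S_n\chi_v(\omega):=\sum_{k=0}^{n-1}\chi_v(T^k\omega)$, so that $\tfrac1n\sharp_v(\omega(1)\ldots\omega(n))$ and $\tfrac1nS_n\chi_v(\omega)$ differ by a term of order $|v|/n$. The first step is to feed the \emph{diagonal} cocycle
\[
A(\omega):=\begin{pmatrix} e^{\chi_v(\omega)} & 0\\ 0 & e^{-\chi_v(\omega)}\end{pmatrix}\in\SL
\]
into condition $(\beta)$. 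Since $\chi_v\geq 0$ one has $\log\|A(n,\omega)\|=|S_n\chi_v(\omega)|$, where for negative indices $S_m\chi_v$ is taken as the corresponding additive cocycle; thus $(\beta)$ produces a single number $\nu(v)$ with $S_m\chi_v(\omega^{(j)})/m\to\nu(v)$ as $m\to\pm\infty$ for every leading sequence $\omega^{(j)}$. The crucial point is that the ``all limits have the same value'' clause of $(\beta)$ forces this limit to be independent of $j$ and of the time direction, which is exactly what a genuine frequency must be.

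The second, and main, step is to transfer this convergence to \emph{every} $\omega\in\Omega$, uniformly. Because $\chi_v$ depends only on $\omega(1)\ldots\omega(|v|)$, the sum $S_n\chi_v(\omega)$ depends only on the word $\omega(1)\ldots\omega(n+|v|-1)$. By $(\alpha)$, for large $n$ this word coincides with a central window $\omega^{(j)}(-k+1)\ldots\omega^{(j)}(-k+n+|v|-1)$ of some leading sequence, i.e.\ $\omega$ agrees with $\omega^{(j)}$ at offset $q=-k$ with $0\leq k\leq n+|v|-1$. Local constancy then yields the telescoping identity
\[
S_n\chi_v(\omega)=S_{n+q}\chi_v(\omega^{(j)})-S_q\chi_v(\omega^{(j)}),
\]
where $|q|\leq n+|v|$ and $|n+q|\leq n+|v|$. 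Writing Step~1 as $S_m\chi_v(\omega^{(j)})=\nu(v)\,m+o(|m|)$, uniformly over the finitely many $j$ and with the finitely many small $|m|$ absorbed into a constant, the two terms combine to $\nu(v)\,n+o(n)$ with an error uniform in $\omega$. Dividing by $n$ gives $\tfrac1nS_n\chi_v(\omega)\to\nu(v)$ uniformly, hence uniform existence of $\nu(v)$ and thus unique ergodicity.

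The step I expect to be delicate is this transfer: one must verify that the matching offset $q$ supplied by $(\alpha)$ stays of order $n$, so that the $o(\cdot)$–estimates along the leading sequences — available only as $|m|\to\infty$ — really produce a genuine $o(n)$ error after the subtraction, uniformly in both $\omega$ and $j$. Everything else — the count in (b), the application of $(\beta)$ in Step~1, and the passage from $S_n\chi_v$ to the frequency $\nu(v)$ — is routine.
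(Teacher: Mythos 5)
Your proposal is correct and follows essentially the same route as the paper: part (b) by the same direct count from $(\alpha)$, and part (a) by feeding a diagonal locally constant $\SL$-cocycle built from the indicator of the cylinder of $v$ (your $\mathrm{diag}(e^{\chi_v},e^{-\chi_v})$ is the paper's $\mathrm{diag}(2,1/2)$-on-the-cylinder construction in base $e$) into condition $(\beta)$, then transferring to all of $\Omega$ via $(\alpha)$. The transfer step you flag as delicate is exactly the part the paper dismisses with ``From ($\alpha$) we then easily obtain uniform existence of frequencies,'' and your telescoping argument with offset $|q|\leq n+|v|-1$, together with uniformity over the finitely many $j$, fills it in correctly.
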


\begin{proof} (a) It suffices to show uniform existence of
frequencies of words. Let $v\in \mathcal{W}(\Omega)$ be arbitrary.
Define $A : \Omega\longrightarrow \SL$ by $A(\omega) = I$ if
$\omega(0)\ldots \omega (|v|-1) \neq v$ and
\begin{equation*}  A(\omega) =\left(
\begin{array}{cc} 2  & 0\\ 0 & 1/2
\end{array}   \right)
\end{equation*}
otherwise. Clearly, $A$ is locally constant. Now, it it not hard to
see that $$ \frac{\log\|A(n,\omega^{(j)})\|}{n} = \log 2
\frac{\sharp_v \omega^{(j)} (0) \ldots \omega^{(j)}(|v|+n-2) }{n}$$ for
$n > 0$ and, similarly, $$ \frac{\log\|A(n,\omega^{(j)})\|}{|n|} =
\log 2 \frac{\sharp_v \omega^{(j)} (n) \ldots \omega^{(j)}(|v|-2)
}{|n|}$$ for $n < 0$. So, from ($\beta$) we infer that the
frequencies of words exist along all half-sided sequences of the
form $\omega^{(j)}(1)\omega^{(j)}(2)\ldots$ and  $\ldots
\omega^{(j)}(-1)\omega^{(j)}(0)$, $j=1,\ldots, r$. From ($\alpha$)
we then easily  obtain uniform existence of frequencies of words.

\smallskip

(b)  This is immediate from ($\alpha$).
\end{proof}

\begin{Remark} While  we do not need it, we note that (LSC) is
stable under morphism. More specifically, consider a subshift
$(\Omega,T)$ over $\mathcal{A}$ satisfying (LSC). Let $\mathcal{B}$
be a finite set and  $\varphi : \mathcal{A} \longrightarrow
\mathcal{B}^*$ be arbitrary.  Define  $\Omega_\varphi\subset
\mathcal{B}^\ZZ$ as the set of all translates of sequences of the
form
$$....\varphi (\omega(-1)) \varphi (\omega(0)) \varphi
(\omega(1))\ldots ...$$ for $\omega\in\Omega$. Then,
$(\Omega_\varphi,T)$ is a subshift satisfying (LSC) as well. We
leave the details to the reader.
\end{Remark}

We now turn towards our main result.  We start by recalling a lemma
essentially due to Ruelle \cite{Rue} (see \cite{Len3,LS} as well).

\begin{lemma}\label{l:ruelle}

Let $(A_n)$ be a sequence of matrices in $\SL$ with $ \sup_{n\in
  \NN} \|A_{n+1} A_{n}^{-1}\|<\infty$ and assume that $\Lambda
  =\lim_n \frac{\log\|A_n \ldots A_1\|}{n}$
   exists and is positive. Then, there exists a unique one-dimensional subspace
   $V\subset \RR^2$
  with $$\lim_n\frac{\log\|A_n \ldots A_1 v\|} {n} = -\Lambda \mbox{ and }
\lim_n \frac{\log\|A_n \ldots A_1  u\|} {n} = \Lambda$$ for all
$v\in V$ with $v\neq 0$ and all  $u\notin V$.
\end{lemma}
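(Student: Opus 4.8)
The plan is to prove this via the multiplicative ergodic structure encoded in the product $A_n\cdots A_1$, exploiting that we work in $\SL$, where $\det = 1$ forces the two Lyapunov exponents to be $\Lambda$ and $-\Lambda$. First I would observe that since $\|A_n\cdots A_1\| \to \infty$ exponentially at rate $\Lambda > 0$ and $\det(A_n\cdots A_1)=1$, the smallest singular value $s_n$ of $P_n := A_n\cdots A_1$ satisfies $s_n = 1/\|P_n\|$, so $\frac{1}{n}\log s_n \to -\Lambda$. Writing the singular value decomposition $P_n = U_n \Sigma_n W_n^{*}$ with $\Sigma_n = \mathrm{diag}(\|P_n\|, 1/\|P_n\|)$, the candidate contracting subspace is $V_n := W_n^{*} \, \mathrm{span}(e_2)$, the direction of maximal contraction. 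The goal is to show $V_n$ converges to a fixed line $V$ and that this $V$ is the desired subspace.

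The key step is to prove that the contracting directions $V_n$ form a Cauchy sequence on the projective line $\mathbb{P}^1(\RR)$, so that $V = \lim_n V_n$ exists. Here I would use the hypothesis $\sup_n \|A_{n+1}A_n^{-1}\| =: C < \infty$, which bounds how much the product can change when one extra factor is appended: $P_{n+1} = A_{n+1}P_n$ with $A_{n+1}$ of bounded norm (and bounded inverse norm, since it lies in $\SL$). The standard estimate is that the angle between the most-contracted directions of $P_n$ and $P_{n+1}$ is controlled by the ratio of singular values, which decays like $e^{-2\Lambda n}$; summing the geometric series gives convergence of $V_n$ at an exponential rate, hence existence of $V$. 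I would also establish a gap estimate ensuring the top and bottom singular value directions stay uniformly separated, so that the projection onto $V_n$ does not degenerate.

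Once $V$ is identified, I would verify the two limit statements. For $v \in V \setminus\{0\}$, the bound $\|P_n v\| \le \|P_n\|\,\mathrm{dist}(v, V_n)\,|v| + s_n |v|$ together with the exponential convergence $\mathrm{dist}(V,V_n) = O(e^{-2\Lambda n})$ shows $\|P_n v\|$ is dominated by $s_n |v|$ up to subexponential corrections, giving $\frac{1}{n}\log\|P_n v\| \to -\Lambda$. For $u \notin V$, the component of $u$ along the expanding direction $W_n^{*}e_1$ is bounded below uniformly in $n$ (again by the separation of $V_n$ from $V$ plus $u \notin V$), so $\|P_n u\| \ge \|P_n\| \cdot c - s_n|u| \ge \frac{c}{2}\|P_n\|$ for large $n$, whence $\frac{1}{n}\log\|P_n u\| \to \Lambda$. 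Uniqueness of $V$ is then automatic: any line on which the rate is $-\Lambda$ must coincide with $V$, since every $u \notin V$ yields rate $+\Lambda$.

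The main obstacle I anticipate is the convergence of the contracting directions, i.e.\ showing $(V_n)$ is Cauchy in $\mathbb{P}^1(\RR)$ with a usable rate. This is where the hypothesis $\sup_n\|A_{n+1}A_n^{-1}\| < \infty$ is essential and must be combined carefully with the positivity of $\Lambda$: one needs that the singular-value gap grows exponentially fast enough to beat the bounded perturbation introduced by each new factor. I would handle this via a perturbation lemma for invariant directions of matrices with a spectral (singular-value) gap — the quantitative statement being that appending a factor of bounded norm moves the contracting direction by at most $O(\|A_{n+1}A_n^{-1}\|\cdot s_n/\|P_n\|)$ — and then sum the resulting exponentially small increments. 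This is precisely the content of Ruelle's original argument, and I would either cite \cite{Rue,Len3,LS} for the perturbation estimate or reproduce its short proof.
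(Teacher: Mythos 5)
The paper never proves Lemma~\ref{l:ruelle}: it is quoted as ``essentially due to Ruelle'' and its proof is delegated to \cite{Rue,Len3,LS}. Your sketch reconstructs exactly the argument contained in those references --- singular value decomposition of $P_n = A_n\cdots A_1$ with $s_n = 1/\|P_n\|$ from unimodularity, exponential convergence of the most-contracted directions, then the two limit statements and uniqueness --- so in terms of method you are reproducing the intended proof, and that outline is sound.

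There is, however, one genuine mismatch between your argument and the statement as printed. Your Cauchy estimate treats the passage from $P_n$ to $P_{n+1} = A_{n+1}P_n$ as a perturbation by a matrix ``of bounded norm'', and your per-step bound is written as $O\bigl(\|A_{n+1}A_n^{-1}\|\cdot s_n/\|P_n\|\bigr)$. But the hypothesis bounds $\|A_{n+1}A_n^{-1}\|$, not $\|A_{n+1}\|$; since the $A_n$ in the lemma are the individual factors (forced by the expression $A_n\cdots A_1$), this assumption by itself allows the factors to be unbounded --- e.g.\ $A_k = D^k$ with $D\in\SL$ hyperbolic gives $A_{k+1}A_k^{-1}=D$ bounded while $\|A_k\|\to\infty$. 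The matrix actually appended to $P_n$ is $A_{n+1}$, so the correct perturbation constant is $\|A_{n+1}\|\,\|A_{n+1}^{-1}\| = \|A_{n+1}\|^{2}$, and if the factors grow at exponential rate exceeding $\Lambda$ this beats the singular-value gap $e^{-2\Lambda n + o(n)}$ and your geometric series no longer closes; whether the lemma even holds under the literal hypothesis is not settled by this argument, since you nowhere exploit the extra strength of the assumption that the limit $\Lambda$ exists. What you do prove is the standard lemma of \cite{LS} and \cite{Len3}, in which the symbols $A_n$ denote the \emph{products}, so that $\sup_n\|A_{n+1}A_n^{-1}\|<\infty$ says precisely that consecutive products differ by a bounded factor, i.e.\ that the one-step matrices are bounded. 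This mismatch is inherited from the paper's transcription of that lemma rather than being your invention, and it is harmless for the application: in the proof of Theorem~\ref{l:main-technical} the cocycle is locally constant, hence takes finitely many values, and the factors are bounded. Still, to make your proof match a true statement you should either read the $A_n$ of the lemma as the products (as in \cite{LS,Len3}), or state the hypothesis you actually use, namely $\sup_n\|A_n\|<\infty$ in factor notation.
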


After this preparation we can come to the main technical result of
the article.

\begin{theorem}[Main technical result]\label{l:main-technical}
Assume that the minimal uniquely ergodic subshift
 $(\Omega,T)$ satisfies (LSC). Then, every
locally constant function $A: \Omega \longrightarrow \SL$  is uniform.
\end{theorem}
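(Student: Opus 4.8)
The plan is to reduce the assertion to \emph{pointwise} convergence of the Lyapunov average and then to establish that convergence by comparing, for each $\omega$, the product $A(n,\omega)$ with products read off the finitely many leading sequences. First I would put $f_n(\omega):=\log\|A(n,\omega)\|$. These functions are continuous, satisfy $f_n\ge 0$ (since $\|B\|\ge 1$ for $B\in\SL$), and are subadditive, $f_{n+m}(\omega)\le f_n(\omega)+f_m(T^n\omega)$, because $A(n+m,\omega)=A(m,T^n\omega)A(n,\omega)$ and $\|\cdot\|$ is submultiplicative. By the theorem of Benjamin Weiss recalled in Section~\ref{s:background} (and the remark following it), for the minimal system $(\Omega,T)$ it then suffices to show that $\tfrac1n f_n(\omega)$ converges for every $\omega$; the limit is automatically constant and the convergence uniform. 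Writing $\Lambda\ge 0$ for the common value supplied by $(\beta)$, the goal becomes $\tfrac1n f_n(\omega)\to\Lambda$ for all $\omega$.

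Next I would exploit $(\alpha)$ together with local constancy. Since $A(n,\omega)$ depends only on a window of $\omega$ of length $n+2N$, condition $(\alpha)$ identifies that window with a window of some $\omega^{(j)}$ meeting the origin. Splitting the corresponding product at the origin gives, up to chopping a bounded number of factors (controlled uniformly by Proposition~\ref{p:nutzlich}(b)),
\[ A(n,\omega)=A(n-k,\omega^{(j)})\,A(k,T^{-k}\omega^{(j)})=:P_{n-k}\,Q_k \]
for some $j\in\{1,\dots,r\}$ and some $0\le k\le n$; when the window lies entirely on one side of the origin one gets instead a pure forward or backward product, whose rate is already $\Lambda$ by $(\beta)$. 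By Proposition~\ref{p:nutzlich}(a) and the cocycle identity one has $Q_k^{-1}=A(-k,\omega^{(j)})$ and $\|Q_k\|=\|A(-k,\omega^{(j)})\|$, so $(\beta)$ yields $\tfrac1m\log\|P_m\|\to\Lambda$ and $\tfrac1k\log\|Q_k\|\to\Lambda$.

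The upper bound is then immediate from submultiplicativity, $\tfrac1n\log\|P_{n-k}Q_k\|\le\tfrac{n-k}{n}\tfrac{\log\|P_{n-k}\|}{n-k}+\tfrac{k}{n}\tfrac{\log\|Q_k\|}{k}\le\Lambda+o(1)$, uniformly in $k$ (bounded indices being absorbed by continuity); in particular, if $\Lambda=0$ then $f_n\ge 0$ forces $\tfrac1n f_n(\omega)\to 0$ and we are done. The hard case is $\Lambda>0$. There I would apply Lemma~\ref{l:ruelle} to $(P_m)$ — the hypothesis $\sup_m\|P_{m+1}P_m^{-1}\|=\sup_m\|A(T^m\omega^{(j)})\|<\infty$ holds since $A$ takes finitely many values — to obtain a contracting line $V_j^+$, and analogously to $A(-k,\omega^{(j)})=Q_k^{-1}$ to obtain a contracting line $V_j^-$ for the backward cocycle. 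Condition $(\gamma)$ now forces $V_j^+\neq V_j^-$: a nonzero vector in $V_j^+\cap V_j^-$ would make both limits $\lim_{n\to\pm\infty}\tfrac1{|n|}\log\|A(n,\omega^{(j)})v\|$ equal to $-\Lambda<0$, which $(\gamma)$ forbids.

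The main obstacle is the lower bound, since a priori the expansion of $Q_k$ could be cancelled by the contraction of $P_{n-k}$. The device I would use to prevent this is to push through the cocycle a \emph{fixed} vector that is backward-contracted but forward-expanded. Concretely, fix a nonzero $e\in V_j^-$; then $e\notin V_j^+$ by the transversality just obtained. Feeding the $k$-dependent unit vector $w_k:=Q_k^{-1}e/\|Q_k^{-1}e\|$ gives
\[ \|P_{n-k}Q_k\|\ \ge\ \|P_{n-k}Q_k w_k\|\ =\ \frac{\|P_{n-k}e\|}{\|Q_k^{-1}e\|}\ =\ \frac{\|A(n-k,\omega^{(j)})e\|}{\|A(-k,\omega^{(j)})e\|}. \]
Because $e\notin V_j^+$, Lemma~\ref{l:ruelle} gives $\tfrac1m\log\|A(m,\omega^{(j)})e\|\to\Lambda$, while $e\in V_j^-$ gives $\tfrac1k\log\|A(-k,\omega^{(j)})e\|\to-\Lambda$. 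Taking logarithms, dividing by $n$, and using the weights $\tfrac{n-k}{n}+\tfrac{k}{n}=1$ yields $\tfrac1n\log\|P_{n-k}Q_k\|\ge\Lambda-o(1)$ uniformly over $0\le k\le n$ (the regimes where $k$ or $n-k$ stays bounded are absorbed into the $o(1)$ exactly as for the upper bound, and the finiteness of the index set $j$ together with the common $\Lambda$ from $(\beta)$ makes the estimate uniform across leading sequences). Combining the two bounds gives $\tfrac1n f_n(\omega)\to\Lambda$ for every $\omega$, and uniformity then follows from Weiss's theorem, proving that $A$ is uniform.
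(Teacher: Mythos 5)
Your proof is correct, and its technical heart coincides with the paper's own argument: both normalize to the case where $A(\omega)$ depends only on $\omega(0)$ via Proposition~\ref{p:nutzlich}, use $(\alpha)$ to identify $A(n,\omega)$ with a product along a leading sequence split at the origin, apply Lemma~\ref{l:ruelle} in both time directions to get contracting lines $V^{(j)}_{\pm}$, use $(\gamma)$ to force $V^{(j)}_{+}\neq V^{(j)}_{-}$, and then push a fixed nonzero vector of $V^{(j)}_{-}$ through the split product; your inequality
$\|P_{n-k}Q_k\|\geq \|A(n-k,\omega^{(j)})e\|/\|A(-k,\omega^{(j)})e\|$
is exactly the paper's estimate with $\hat{u}=A(-n_-,\omega^{(j)})\hat{v}$. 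Where you genuinely diverge is the reduction that converts this estimate into uniformity. The paper invokes Theorem~3 of \cite{Len3}, a characterization of uniformity in terms of uniform positivity, so it only needs the crude lower bound $\log\|A(n,\omega)\|/n\geq\delta$ for some $\delta>0$; this is why its case analysis is content with the constants $\Lambda/2$ and $\Lambda/8$. You instead prove full pointwise convergence $\tfrac1n\log\|A(n,\omega)\|\to\Lambda$ for \emph{every} $\omega$ --- the upper bound by submultiplicativity along the same decomposition, the lower bound by the vector estimate, both uniform over the split index $k$ because there are only finitely many leading sequences and the reference vectors are fixed --- and then close with Weiss's theorem from Section~\ref{s:background}, whose hypotheses (minimality, continuity, subadditivity of $f_n=\log\|A(n,\cdot)\|$) you verify correctly. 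Your route buys self-containedness (no appeal to the external result of \cite{Len3}, only to statements already proved or recorded in the paper) together with the sharper intermediate conclusion that the lower bound actually converges to $\Lambda$; the paper's route is lighter at the estimate stage, since any positive $\delta$ suffices and no matching upper bound is needed. Both reductions make essential use of minimality, which is a standing hypothesis of the theorem, so this is legitimate in either case.
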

\begin{proof} By ($\beta$) there exist a $\Lambda\geq 0$ with
$\Lambda =\lim_{n\to \pm\infty} \frac{\log\|A(n,\omega^{(j)})\|}{|n|}$
for every $j=1,\ldots, r$. If $\Lambda=0$ the desired statement
follows rather easily from ($\alpha$). So, we consider now the case
$\Lambda \neq 0$. By Theorem 3 of \cite{Len3} it suffices to show
that there exists a $\delta>0$ with
$$\frac{\log\|A(n,\omega)\| }{n} \geq \delta$$ for all
$\omega\in\Omega$ and all sufficiently large $n$. This follows from
($\beta$) and ($\gamma$). Here are the details: Assume without loss
of generality that $A(\omega)$ only depends on $\omega(0)$. (The
general case can be treated by Proposition \ref{p:nutzlich}.) By Lemma~\ref{l:ruelle} there exists for each $ j \in \{1,\ldots, r \} $ a one dimensional subspace $ V^{(j)}_{+} \subset \RR^{2} $ with
\[ \frac{\log \lVert A( n , \omega^{(j)} ) v \rVert }{n} \to -\Lambda , n \to \infty , \]
whenever $ v \in V^{(j)}_{+} \setminus \{ 0 \} $ and a one dimensional subspace $ V^{(j)}_{-} \subset \RR^{2} $ with
\[ \frac{ \log \lVert A( -n , \omega^{(j)} ) v \rVert }{n} \to -\Lambda , n \to \infty , \]
whenever $ v \in V^{(j)}_{-} \setminus \{ 0 \}$. Now, by $(\gamma)$ we infer that $ V^{(j)}_{+} \neq V^{(j)}_{-} $ for each $ j \in \{1 , \ldots , r \} $. So, again, by Lemma~\ref{l:ruelle} we have
\[ \frac{\log \lVert A( n , \omega^{(j)} ) v \rVert }{n} \to \Lambda , n \to \infty \]
whenever $ v \in V^{(j)}_{-} \setminus \{ 0 \} $. Now fix $ \hat{v} \in  V^{(j)}_{-} \setminus \{ 0 \} $. Then there exists $ n_{1} \in \NN $ with
\[ \frac{ \log \lVert A( -n , \omega^{(j)} ) \hat{v} \rVert }{n} \leq - \frac{\Lambda}{2} \quad \text{and} \quad \frac{\log \lVert A( n , \omega^{(j)} ) \hat{v} \rVert }{n} \geq \frac{\Lambda}{2} \]
for all $ n \geq n_{1}$. Moreover, there exists a number $n_{2}$ with
\[ \frac{1}{n} \cdot \lvert \; \max_{l : \, \lvert l \rvert < n_{1}} \left( \log \lVert A( l , \omega^{(j)} ) \hat{v} \rVert \right) \; \rvert  \leq \frac{\Lambda}{8} \]
for all $n \geq n_{2}$. Let now $ n_{-}, n_{+} \geq 0 $ with $ N := n_{-} + n_{+} \geq \max \{ 2 n_{1} , n_{2} \} $. It is easy to see that
\[ A( N , T^{-n_{-}} \omega^{(j)} ) = A( n_{+} , \omega^{(j)} ) \cdot A( -n_{-} , \omega^{(j)} )^{-1} \, .\]
We denote $ \hat{u} := A( -n_{-} , \omega^{(j)} ) \hat{v} $ and obtain
\begin{align*}
\log \lVert A( N , T^{-n_{-}} \omega^{(j)} ) \rVert &\geq \log \left( \frac{\lVert A( N , T^{-n_{-}} \omega^{(j)} ) \hat{u} \rVert }{\lVert \hat{u} \rVert} \right) \\
&= \log \lVert A( n_{+} , \omega^{(j)} ) \hat{v} \rVert - \log \lVert A( -n_{-} , \omega^{(j)} ) \hat{v} \rVert \, .
\end{align*}
Clearly, $n_{-}$ and $n_{+}$ cannot both be smaller than $n_{1}$, since we assumed $ n_{-} + n_{+} \geq 2 n_{1} $. If $ n_{-} , n_{+} \geq n_{1} $ holds, then the definitions of $n_{1}$ and $\hat{v}$ imply
\[ \frac{ \log \lVert A( N , T^{-n_{-}} \omega^{(j)} ) \rVert }{N} \geq \frac{ \log \lVert A( n_{+} , \omega^{(j)} ) \hat{v} \rVert }{n_{+}} \frac{n_{+}}{N} - \frac{ \log \lVert A( -n_{-} , \omega^{(j)} ) \hat{v} \rVert }{n_{-}} \frac{n_{-}}{N} \geq \frac{\Lambda}{2} \, .\]
If $ n_{-} < n_{1} $ and $ n_{+} \geq n_{1} $ hold, we can use $ \frac{n_{+}}{N} \geq \frac{1}{2} $ as well as $ n_{-} < n_{1} $ and $ N \geq n_{2} $ to obtain
\[ \frac{ \log \lVert A( N , T^{-n_{-}} \omega^{(j)} ) \rVert }{N} \geq \frac{ \log \lVert A( n_{+} , \omega^{(j)} ) \hat{v} \rVert }{n_{+}} \frac{n_{+}}{N} - \frac{ \log \lVert A( -n_{-} , \omega^{(j)} ) \hat{v} \rVert }{N} > \frac{\Lambda}{8} \, . \]
The remaining case ($ n_{-} \geq n_{1} $ and $ n_{+} < n_{1} $) can be treated similarly. By Proposition~\ref{p:nutzlich}, we infer that there exists a $ \delta > 0 $ with
\[ \frac{ \log \lVert A( N , T^{-n_{-}+1} \omega^{(j)} ) \rVert }{N} \geq \delta \]
Now, by ($\alpha$), for every $\omega$ and every sufficiently large $n$, there exists $ j \in \{ 1 , \hdots , r \} $ and $ n_{-} ,n_{+} \geq 0 $ such that
\[ \omega( 0 ) \hdots \omega( n-1 ) = \omega^{(j)}( -n_{-}+1 ) \hdots \omega^{(j)}( 0 ) \omega^{(j)}( 1 ) \hdots \omega^{(j)}( n_{+} ) \, . \]
Here, for $ n_{-} = 0 $, the part $ \omega^{(j)}( -n_{-} ) \hdots \omega^{(j)}(-1)$ denotes the empty word and similarly for $ n_{+} = 0 $.
We obtain $ A( n , \omega ) = A( n_{-} + n_{+} , T^{-n_{-}+1} \omega^{(j)} ) $ and the desired statement follows.
\end{proof}

\begin{Remark} The definition of (LSC) may be weakened and still
allow for the above result to hold. Details are discussed in this
remark:

(a)  Invoking the avalanche principle of Bourgain / Jitomirskaya
\cite{BJ} as in in \cite{DL} it is not hard to see that the previous
result remains valid if condition $(\alpha)$ is weakened to
$(\widetilde{\alpha})$:  There exists an $r\in \NN$,  finitely many
$\omega^{(j)} \in\Omega$, $j=1,\ldots, r$, as well as a sequence
$(l_n)$ of natural numbers with $l_n\to \infty$,  such that the
following holds:
\begin{itemize}
\item[$(\widetilde{\alpha})$] There exists $N\in\NN$   with
$$\mathcal{W}(\Omega)_{l_n}  = \bigcup_{j=1}^r  \{\omega^{(j)} (-k+1 )\ldots
\omega^{(j)}(-k + l_n) : k = 0, \ldots, l_n\}$$
 for all $n\in \NN$ with $n\geq N$.
\end{itemize}
We refrain from giving details as our main examples satisfy
condition $(\alpha)$.

(b) Note that one could also allow for infinitely many leading
sequences provided the convergence in $(\beta)$ is uniform over the
family of leading sequences.
\end{Remark}

\section{Spectral theory of Jacobi operators associated to (LSC)
subshifts}\label{s:spectral} In this section we introduce the Jacobi
operators associated to a subshift and then present our first main result, which is a spectral
consequence of Theorem~\ref{l:main-technical}.

\bigskip

Consider a dynamical system  $(\Omega,T)$. To continuous functions
 $f: \Omega \longrightarrow \RR \setminus \{ 0 \}$,  $g : \Omega \longrightarrow \RR$ we associate a family of
\emph{discrete operators} $(H_\omega)_{\omega\in \varOmega}$.
Specifically, for each $\omega \in\varOmega$,  $H_\omega$ is a
bounded selfadjoint operator from $\ell^2 (\ZZ)$ to $\ell^2 (\ZZ)$
acting via
$$(H_\omega u ) (n) = f(T^n \omega) u (n-1) +
f(T^{n+1}\omega)  u (n+1) + g(T^n \omega) u (n)$$ for $u \in \ell^2
(\ZZ)$ and $n\in\ZZ$. In the case $f\equiv 1$ these operators are
known  as \emph{discrete Schr\"odinger operators}. For general $f$
the name \emph{Jacobi operators} is often used in the literature.
The spectrum of $H_\omega$ i.e. the set of $E\in \RR$ such
that  $(H_\omega - E)$ is not invertible, is denoted by $\sigma
(H_\omega)$. If $(\Omega,T)$ is minimal, then the spectrum of
$H_\omega$ does not depend on $\omega\in \Omega$. We denote it by
$\Sigma(f,g)$. Thus, we have
$$\Sigma(f,g) = \sigma(H_\omega)$$
for all $\omega\in\Omega$ in the minimal case. The spectral theory
of the $H_\omega$ is intimately linked with behaviour of solutions
$u:\ZZ\longrightarrow \CC$ to the equation
$$\;\: f(T^n \omega) u (n-1) +
f(T^{n+1}\omega)  u (n+1) + g(T^n \omega) u (n) - E u(n) = 0$$ for
$E\in\RR$. The behaviour of such solutions in turn can be captured
via the function
$$M^E : \Omega \longrightarrow \SL,
M^E( \omega ) := \left( \begin{array}{cc} \frac{E- g(T\omega)}{f(T^{2}\omega)} &
\frac{-1}{f(T^{2}\omega)}\\f(T^{2}\omega) & 0
\end{array} \right).
$$
More specifically, as is well known and not hard to see, $u$ is a
solution of the preceding equation if and only if
$$\widetilde{u} (n) = M^E (n,\omega)\widetilde{u} (0)$$
for all $n\in\ZZ$, where
$$\widetilde{u}(n) = \left( \begin{array}{c}  u(n+1) \\  f(T^{n+1} \omega) u(n) \end{array}
\right).$$  We call $M^E$ the \emph{Jacobi cocycle} associated to
the energy $E$. Note that it belongs indeed to $\SL$ (and it is
exactly to ensure this that we introduced the  factor $f(T^{n+1}\omega)$
in various places above.)

Now, uniformity of the cocycles $M^E$  implies Cantor spectrum of
Lebesgue measure zero. In the Schrödinger case this was shown
 in \cite{Len}. This result was then extended to
Jacobi operators in \cite{BP}. To state the result in a precise
manner, we need one more piece of notation. We say that $(f,g) :
\Omega\longrightarrow \RR^2$ is \emph{periodic} if there exists a
natural number $P\geq 1$ with $(f(T^P\omega), g(T^P \omega)) =
(f(\omega), g(\omega))$ for all $\omega\in\Omega$.

\begin{lemma}[Theorem 3 in \cite{BP}]\label{l:cantor-jacobi} Let $(\Omega,T)$ be a
uniquely ergodic minimal  dynamical system and
$f,g:\Omega\longrightarrow\RR$ continuous  with $f(\omega)\neq 0$
for all $\omega\in\Omega$. Assume that $(f,g)$ is not periodic.
Then, $\Sigma(f,g)$ is a Cantor set of Lebesgue measure zero if
$M^E$ is uniform for every $E\in\RR$.
\end{lemma}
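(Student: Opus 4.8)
The plan is to reduce everything to the behaviour of the \emph{uniform Lyapunov exponent} of the Jacobi cocycle and to read off both the measure-zero and the Cantor property from a single identification of the spectrum. First I would note that, by unique ergodicity together with the hypothesis that $M^E$ is uniform, for every $E\in\RR$ the limit
\[ \Lambda(E) := \lim_{n\to\infty}\frac{1}{n}\log\|M^E(n,\omega)\| \]
exists, is independent of $\omega\in\Omega$ and is attained uniformly in $\omega$. Since the entries of $M^E(\omega)$ depend polynomially on $E$, a standard subharmonicity (Thouless-type) argument shows in addition that $E\mapsto\Lambda(E)$ is continuous.

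The central step is the identity
\[ \Sigma(f,g)=\{E\in\RR:\Lambda(E)=0\}. \]
To prove it I would combine two classical facts. By Johnson's theorem, $E$ lies in the resolvent set of $H_\omega$ (equivalently, $E\notin\Sigma(f,g)$) precisely when the cocycle $M^E$ admits an exponential dichotomy, i.e.\ is uniformly hyperbolic. On the other hand, for uniform cocycles over a strictly ergodic system the analysis of Furman \cite{Fur} recalled in the introduction shows that uniform hyperbolicity is equivalent to $\Lambda(E)>0$: positivity of the uniform exponent forces uniform diagonalizability and hence uniform hyperbolicity, while uniform hyperbolicity trivially forces $\Lambda(E)>0$. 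Chaining the two equivalences gives $E\notin\Sigma(f,g)\iff\Lambda(E)>0$, which is the claimed identity; in particular $\Sigma(f,g)$ is closed.

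To obtain that $\Sigma(f,g)$ has Lebesgue measure zero I would argue by contradiction and invoke Kotani theory for ergodic Jacobi matrices. If $\{E:\Lambda(E)=0\}$ had positive Lebesgue measure, then Kotani's theorem would force the operator family to be deterministic (reflectionless on that set); exploiting that the coefficients $(f,g)$ take only finitely many values, as they do over a subshift in the intended applications, the Kotani--Simon rigidity result then forces $(f,g)$ to be \emph{periodic}. This contradicts the standing assumption that $(f,g)$ is not periodic, so $\{E:\Lambda(E)=0\}=\Sigma(f,g)$ must be a Lebesgue-null set. This is the step I expect to be the main obstacle: it is where aperiodicity is genuinely used and where the full strength of Kotani theory (together with the passage from determinism to periodicity for finite-valued coefficients) enters, and establishing the Jacobi version of this rigidity is the technical heart of the matter.

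Finally, the Cantor property splits into two easy consequences. Having Lebesgue measure zero, $\Sigma(f,g)$ has empty interior. For the absence of isolated points I would use that $\Sigma(f,g)$ equals the topological support of the density of states measure $dN$, and that the integrated density of states is continuous (for instance by the log-Hölder continuity of Craig--Simon, valid for these ergodic operators), so that $dN$ has no atoms; an isolated point of the support of a non-atomic measure would itself be an atom, a contradiction. Hence $\Sigma(f,g)$ is closed, perfect and of empty interior, i.e.\ a Cantor set of Lebesgue measure zero, as claimed.
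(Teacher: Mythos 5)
The paper itself contains no proof of Lemma~\ref{l:cantor-jacobi}: it is imported verbatim as Theorem~3 of \cite{BP}, whose proof in turn extends the Schr\"odinger-case argument of \cite{Len} to Jacobi matrices. Your proposal reconstructs essentially that argument --- the identification $\Sigma(f,g)=\{E\in\RR : \Lambda(E)=0\}$ via Johnson's theorem combined with the equivalence ``uniform with positive exponent $\Leftrightarrow$ uniformly hyperbolic'' (\cite{Fur}, see also Theorem~3 of \cite{Len3}), Kotani theory plus aperiodicity for Lebesgue measure zero, and non-atomicity of the integrated density of states to exclude isolated points --- so in structure it is the intended proof, and each ingredient you name is the one actually assembled in \cite{BP}.

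The step that deserves scrutiny is the Kotani step, and you have in fact put your finger on a real mismatch between this proof and the statement as quoted. The rigidity ``reflectionless/deterministic plus finitely many values $\Rightarrow$ periodic'' genuinely requires $(f,g)$ to take finitely many values, whereas the lemma above assumes only continuity; your appeal to ``the intended applications'' imports a hypothesis that is not in the statement. This is not a dispensable convenience: for merely continuous sampling functions the conclusion itself can fail. Over an irrational rotation, the subcritical almost Mathieu operator has continuous, non-periodic coefficients $(f,g)=(1,\lambda\cos(2\pi\,\cdot\,))$ with $0<\lambda<2$; its cocycles $M^E$ are uniform at every energy (uniformly hyperbolic off the spectrum, and of uniformly subexponential growth, uniformly in the phase, on the spectrum by Avila's almost reducibility), yet its spectrum has Lebesgue measure $4-2\lambda>0$. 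So the finite-valuedness must be read into the lemma (as it effectively is in the quasicrystal setting of \cite{BP}); no argument can establish the statement in the literal generality of continuous $f,g$. For the present paper this is harmless, since its only application, Theorem~\ref{t:main-abstract}, assumes that $f$ and $g$ take finitely many values; but you should state explicitly that your proof (like the cited one) proves the lemma under that additional hypothesis.
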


Given this result it is not hard to prove the main application of
our article.

\begin{theorem}\label{t:main-abstract} Let $(\Omega,T)$ be
a uniquely ergodic minimal  subshift satisfying (LSC). Let $f,g :
\Omega \longrightarrow \RR$ be continuous functions taking only
finitely many values such that   $f(\omega)\neq 0$ for all
$\omega\in\Omega$ and $(f,g) : \Omega\longrightarrow \RR^2$ is not
periodic. Then, the spectrum $\Sigma(f,g)$ of the associated Jacobi
operators is a Cantor set of Lebesgue measure zero.
\end{theorem}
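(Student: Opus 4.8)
The plan is to reduce the statement to the main technical result via the spectral criterion recorded in Lemma~\ref{l:cantor-jacobi}. Since $(\Omega,T)$ is uniquely ergodic and minimal, $f,g$ are continuous with $f(\omega)\neq 0$ everywhere, and $(f,g)$ is assumed non-periodic, that lemma applies provided the Jacobi cocycle $M^E$ is uniform for every $E\in\RR$. Thus the entire task is to verify uniformity of $M^E$, and for this I would invoke Theorem~\ref{l:main-technical}, whose hypotheses on $(\Omega,T)$ are exactly those in force here. The only thing left to check is that $M^E$ is \emph{locally constant} for each fixed $E$, so that Theorem~\ref{l:main-technical} is applicable.

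The key observation making this work is that a continuous function on a subshift taking only finitely many values is automatically locally constant. Indeed, for each value $c$ the level set $f^{-1}(\{c\})$ is clopen: it is closed as the preimage of a point, and open because $c$ is isolated in the finite set of values. Clopen subsets of a subshift are finite unions of cylinder sets, hence depend only on finitely many coordinates. Applying this to both $f$ and $g$, I obtain an $N\in\NN$ such that $f(\omega)$ and $g(\omega)$ depend only on $\omega(-N)\ldots\omega(N)$; this is precisely where the hypothesis that $f$ and $g$ take only finitely many values is used, and it is what distinguishes the present setting from general continuous $f,g$.

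Since $M^E(\omega)$ is built only from $g(T\omega)$ and $f(T^2\omega)$, it then depends on a bounded window of coordinates about the origin, so $M^E$ is locally constant. A direct computation gives $\det M^E(\omega)=1$, and the entries are real, so $M^E:\Omega\longrightarrow\SL$ as required. Theorem~\ref{l:main-technical} therefore yields that $M^E$ is uniform for every $E\in\RR$.

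Feeding this uniformity back into Lemma~\ref{l:cantor-jacobi}, together with the non-periodicity of $(f,g)$, gives that $\Sigma(f,g)$ is a Cantor set of Lebesgue measure zero, which completes the argument. I do not expect any serious obstacle: once the technical result is available, the proof is essentially a verification chain, and the only point demanding care is the local constancy of $M^E$, which reduces to the elementary fact about continuous finitely-valued functions on subshifts noted above.
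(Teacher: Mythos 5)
Your proposal is correct and takes essentially the same route as the paper's proof: continuity together with finitely many values gives local constancy of $f$ and $g$, hence of the Jacobi cocycle $M^E$ for each $E\in\RR$, so Theorem~\ref{l:main-technical} yields uniformity, and Lemma~\ref{l:cantor-jacobi} then delivers the Cantor spectrum of Lebesgue measure zero. The only difference is expository: you spell out the clopen-level-set argument for local constancy and the check that $\det M^E=1$, both of which the paper treats as immediate.
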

\begin{proof} As the continuous functions $f,g$ take only finitely many values, they are
locally constant. Thus, the associated cocycle $M^E$ is locally
constant for every $E\in \RR$. Hence, by (LSC) and Theorem
\ref{l:main-technical} all $M^E$ are uniform. Now, the desired
statement follows directly from Lemma \ref{l:cantor-jacobi}.
\end{proof}

As we will see in the subsequent sections, there are ample classes of examples to which this theorem applies.

\section{Combinatorial criteria for (LSC)}\label{s:combinatorial}
In this section we discuss  combinatorial conditions on the subshift
ensuring (LSC). Clearly, ($\alpha$) is already a combinatorial
condition. So, we mainly work on providing combinatorial criteria
for $(\beta)$ and ($\gamma$).   In the end, we provide a
sufficient condition for a subshift to satisfy (LSC). While this
condition may not seem particularly pleasing  at first sight, it
turns out that it can rather easily be checked in examples. In fact,
this is how we will treat the examples discussed in the subsequent
sections  of the article.

\bigskip

Let $\mathcal{A}$ be a finite alphabet and $(\Omega,T)$ a subshift
over $\mathcal{A}$.  A function $F
:\mathcal{W}(\Omega)\longrightarrow \RR$ is called
\emph{subadditive} if $F(xy) \leq F(x) + F(y)$ holds for all
$x,y\in \mathcal{W}(\Omega)$ with $xy\in\mathcal{W}(\Omega)$. For a subadditive function, we define $F^{(n)} :=\max\{ \frac{F(x)}{n} : |x| = n\}$ and $\phi
:\NN\longrightarrow \RR, \phi(n) :=n F^{(n)}$. Then, $\phi (n+m)
\leq \phi (n) + \phi(m)$ for all $n,m\in \NN$ by subadditivity of
$F$. Thus, we infer $$\lim_{n\to \infty} \frac{\phi(n)}{n} = \inf_n
\frac{\phi(n)}{n} = :\overline{F}.$$

The sequence $p : \NN\longrightarrow \mathcal{A}$ is said to satisfy
the condition (PQ) if there exists a $c>0$ such that for any prefix
$v$ of $p$ the inequality
$$\liminf_{n\to \infty} \frac{\widetilde{\sharp}_v p(1)\ldots
p(n)}{n}|v| \geq c$$ holds. Here, $\widetilde{\sharp}_v w$ denotes
the maximal number of mutually disjoint copies of $v$ in $w$.

After these preparations we can now provide a characterization of
those $p$ for which $\lim_n \frac{F(p(1)\ldots p(n))}{n} =
\overline{F}$ holds.  The statement can be seen as a variant of a
main result in \cite{Len2} and the proof is inspired by methods from
\cite{Len2}.

\begin{lemma}[Combinatorial condition for ($\beta$)]
\label{lem:CombCondB}
Let $(\Omega,T)$ be a subshift, $\omega\in \Omega$ and $p = \omega(1 )\omega(2) \ldots ...$.

(a) The limit $\lim_{n\to\infty} \frac{F(p(1)\ldots p(n))}{n}$ exists for a subadditive function $F:\mathcal{W}(\Omega)\longrightarrow\RR$ if the following two assumptions hold:
\begin{itemize}
\item{$p$ satisfies (PQ);}
\item{there exists a sequence $p^{(n)}$ of prefixes of $p$ with $\lim_{n\to \infty} \frac{F(p^{(n)})}{|p^{(n)}|} = \overline{F}$.}
\end{itemize}

(b) If $\lim_{n\to \infty} \frac{F(p(1)\ldots p(n)) }{n} $ exists for every subadditive $F$, then $p$ satisfies (PQ).
\end{lemma}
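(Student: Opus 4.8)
\emph{Plan for part (a).} Write $w_n := p(1)\ldots p(n)$. Since $F$ is subadditive we have $F(w_n) \le n F^{(n)} = \phi(n)$, and $\phi(n)/n \to \overline{F}$, so $\limsup_n F(w_n)/n \le \overline{F}$ with no hypotheses at all. The second assumption supplies a subsequence along which $F(w_n)/n \to \overline{F}$, so in fact $\limsup_n F(w_n)/n = \overline{F}$. Thus the whole content of (a) is the matching lower bound $\liminf_n F(w_n)/n \ge \overline{F}$; once this is shown the limit exists and equals $\overline{F}$. The only lower bound on a prefix value that subadditivity affords is comparison with a \emph{longer} prefix: from $w_\ell = w_n\, p(n+1)\ldots p(\ell)$ one gets, for every $\ell \ge n$,
\[ F(w_n) \ge F(w_\ell) - \phi(\ell-n). \]
I would apply this with $w_\ell$ a near-optimal prefix, $F(w_\ell) \ge (\overline F - \varepsilon)\ell$, as furnished by the second assumption, and bound $\phi(\ell-n) \le (\overline F + \varepsilon')(\ell-n)$ (valid once $\ell-n$ is large, because $F^{(k)} \to \overline{F}$). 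The estimate then rearranges to
\[ \frac{F(w_n)}{n} \ge \overline F + \varepsilon' - \frac{\ell}{n}\,(\varepsilon+\varepsilon'), \]
which is useful precisely when $\ell$ can be chosen with $\ell/n$ close to $1$.

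The role of (PQ) is exactly to secure this. I would use it to show that near-optimal prefixes occur with \emph{relatively small gaps}: for each $\varepsilon$ the set of lengths $\ell$ with $F(w_\ell) \ge (\overline F - \varepsilon)\ell$ is relatively dense, so that for every $n$ one may pick such an $\ell$ with $\ell - n = o(n)$. Letting first $n \to \infty$ (so $\ell/n \to 1$) and then $\varepsilon,\varepsilon' \to 0$ in the displayed inequality yields $\liminf_n F(w_n)/n \ge \overline F$. The mechanism producing this density is the super-additivity of disjoint-occurrence counts, $\widetilde{\sharp}_v(xy) \ge \widetilde{\sharp}_v(x) + \widetilde{\sharp}_v(y)$: a prefix that recurred only sparsely could not meet the uniform bound $\liminf_n \widetilde{\sharp}_v(w_n)|v|/n \ge c$ demanded by (PQ), and this uniform positive frequency of disjoint copies is what gets converted, following the quasi-tiling scheme of \cite{Len2}, into density of near-optimal prefix-lengths.

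\emph{Main obstacle.} The hard part is exactly this conversion — turning the uniform positive disjoint-frequency of \emph{all} prefixes into relative density of the lengths at which $F(w_\ell)/\ell$ is within $\varepsilon$ of $\overline F$. Since subadditivity only ever bounds $F$ from above along subwords, the argument cannot be a naive ``each copy contributes $F(v)$'' estimate; it must instead marry the prefix-comparison inequality with a careful tiling of $w_n$ by translates of a long near-optimal prefix whose recurrence is controlled by (PQ). This is the step where I expect to have to reproduce, and adapt to one-sided infinite words, the technical core of \cite{Len2}.

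\emph{Plan for part (b).} I would argue by exhibiting explicit subadditive test functions. For a fixed prefix $v$ set $F_v(w) := -|v|\,\widetilde{\sharp}_v(w)$; this is subadditive precisely because $\widetilde{\sharp}_v$ is super-additive. By hypothesis $\lim_n F_v(w_n)/n$ exists for every prefix $v$, which is the same as existence of $d_v := \lim_n \widetilde{\sharp}_v(w_n)|v|/n$, so each $\liminf$ appearing in (PQ) is in fact a genuine limit. It then remains to produce a single $c>0$ valid for all prefixes, which I would do by contradiction: if $\inf_v d_v = 0$, choose prefixes $v_m$ with $d_{v_m} \to 0$ and assemble one subadditive function $F = -\sum_m \alpha_m |v_m|\,\widetilde{\sharp}_{v_m}$, with weights $\alpha_m>0$ chosen together with the sparsity scales of the $v_m$ so that $F(w_n)/n$ has distinct $\liminf$ and $\limsup$, contradicting the hypothesis that this limit exists. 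The delicate point here is engineering genuine oscillation rather than mere smallness of the limit, which is what forces the simultaneous use of infinitely many prefixes with $d_{v_m}\to 0$.
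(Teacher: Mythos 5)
Your reduction of part (a) to the lower bound $\liminf_n F(w_n)/n \ge \overline{F}$ is correct, as is the prefix-comparison inequality $F(w_n)\ge F(w_\ell)-\phi(\ell-n)$. The genuine gap is the step you yourself flag: the claim that (PQ) makes the set of $\varepsilon$-near-optimal lengths relatively dense. No argument is given for it, and the mechanism you point to cannot produce one, because you apply (PQ) to the wrong family of words. Subadditivity only yields \emph{upper} bounds on a word from its pieces; hence knowing that a near-optimal prefix $v$ has many disjoint copies inside $p$ says nothing about $F$ at prefixes ending near those copies (tiling a long prefix by copies of a good $v$ gives $F(w_N)\le r\ts F(v)+\sum_j F(y_j)$, an upper bound, which you already have for free). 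The usable direction is the opposite one: apply (PQ) to the \emph{arbitrary} prefix whose value you want to control, and tile the long near-optimal prefixes by disjoint copies of \emph{it}. This is exactly what the paper does, in contrapositive form: if $F(v_k)/|v_k|\le \overline{F}-\delta$ along a sequence of prefixes, then by (PQ) every sufficiently long prefix $w_N$ decomposes as $y_0 v_k y_1 v_k \cdots v_k y_r$ with at least $\tfrac{Nc}{4|v_k|}$ copies of $v_k$ and gaps of length $\ge |v_k|$ (after discarding every other copy), and subadditivity together with $F(y_j)\le(\overline{F}+\varepsilon)|y_j|$ forces $F(w_N)/N\le \overline{F}-\tfrac{c}{8}\delta$ for \emph{all} large $N$, contradicting the mere existence of the sparse sequence $p^{(n)}$ with $F(p^{(n)})/|p^{(n)}|\to\overline{F}$. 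Note that this argument never needs the good lengths to be dense: one sparse sequence suffices as the contradiction target, so your intermediate densification claim (which is essentially as strong as the conclusion itself) is both unproven and unnecessary.

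For part (b) you are on the paper's route: the test functions $-|v|\,\widetilde{\sharp}_v$, the observation that the (PQ) liminfs are then genuine limits, and an infinite combination designed to oscillate are precisely what the paper uses. What is missing is the one concrete idea that produces the oscillation: every prefix contains itself, so the term of scale $|v|$ has value $|v|\,\widetilde{\sharp}_v(v)/|v|=1$ at the length $|v|$ (the spike), while any term of scale $|v_{n(j)}|$ vanishes identically on prefixes shorter than $|v_{n(j)}|$; choosing the scales $n(k)$ inductively so that the accumulated contribution of the first $k$ terms stays below $1/2$ on all prefixes $w$ with $|v_{n(k+1)}|/2\le |w|<|v_{n(k+1)}|$ then gives $\limsup\ge 1$ and $\liminf\le 1/2$ with unit weights — no tuning of weights $\alpha_m$ is needed. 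You should also justify that the violating prefixes can be chosen with $|v_m|\to\infty$: if a fixed prefix $v$ has $d_v=0$, then so does every longer prefix $u$, since disjoint copies of $u$ contain disjoint copies of $v$ and therefore $d_u\le (|u|/|v|)\,d_v$.
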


\begin{Remark} It is possible to replace (PQ) in the previous lemma  by the seemingly
weaker condition (PW) given as follows: there exists a $c>0$ such that for any prefix
$v$ of $p$ the inequality
$$\liminf_{n\to \infty} \frac{{\sharp}_v p(1)\ldots
p(n)}{n}|v| \geq c$$ holds. This can be shown by  the same argument
as in \cite{Len2}.
\end{Remark}
\begin{proof} (a) Let $v_k$, $k\in\NN$, be an arbitrary sequence of prefixes of
$p$ with $|v_k|\to\infty$. Then,

$$\frac{F(v_k)}{|v_k|} \leq \frac{\phi(|v_k|)}{|v_k|}$$
by definition of $\phi$. This implies
$$\limsup_k \frac{F(v_k)}{|v_k|} \leq\limsup_k
\frac{\phi(v_k)}{|v_k|} = \overline{F}.$$

Thus, it remains to show
$$\liminf_k \frac{F(v_k)}{|v_k|}\geq \overline{F}.$$
Assume the contrary. Then, going to a subsequence if necessary we
can assume without loss of generality that there exists $\delta >0$ with
$$(\clubsuit)\;\: \frac{F(v_k)}{|v_k|}\leq \overline{F} -\delta$$
for all $k\in\NN$. Let $\varepsilon >0$ be arbitrary. By definition
of $\overline{F}$ and as $|v_k|\to\infty$,  there exists then $k_0\in\NN$ with
$$(\diamond)\;\: \frac{F(w)}{|w|}\leq \overline{F} + \varepsilon$$
for all $|w|\geq |v_{k_0}|$.  Consider now an arbitrary $k\geq k_0$
and $p(1)\ldots p (N)$ for a large $N\in\NN$. Then, by $(PQ)$ (and
as $N$ is large) we can write
$$p(1)\ldots p(N) = x_0 v_k x_1  v_k ....v_k x_{m}$$
with suitable (possibly empty) words $x_k$ and at least $\frac{N c}{2 |v_k|}$ copies of $v_k$. After removing every other copy of $v_k$ we arrive at
$$p(1)\ldots p(N) = y_0 v_k y_2  v_k ....v_k y_r,$$
where now  $$|y_j|\geq |v_k| \mbox{ for all $j \in \{1,\ldots, r\}$}$$
and the number $r$ of copies of $v_k$ is still at least  $\frac{N
c}{4 |v_k|}$. Then, by subadditivity of $F$ we can calculate
\begin{eqnarray*}
\frac{F(p(1)\ldots p(N))}{N} &\leq &
\frac{r|v_k|}{N}\frac{F(v_k)}{|v_k|} + \sum_{j=1}^r
\frac{|y_j|}{N}\frac{F(y_j)}{|y_j|}\\
(\clubsuit), (\diamond) &\leq &\frac{r |v_k|}{N} (\overline{F}
-\delta) + \left(\sum_{j=1}^r \frac{|y_j|}{N}\right) (\overline{F} +
\varepsilon)\\
&\leq & \overline{F} -\frac{r |v_k|}{N}\delta + \varepsilon\\
&\leq & \overline{F} - \frac{c}{4} \delta + \varepsilon.\\
\end{eqnarray*}
Now, $\varepsilon >0$ was arbitrary. Thus, we can chose it as
$\frac{c}{8} \delta$. In this case, we infer from the preceding
computation that
\[ \frac{F(p(1)\ldots p(N))}{N} \leq \overline{F} - \frac{c}{8}
\delta \]
for all sufficiently large $N$. This contradicts the second assumption, stating that $\frac{F(p^{(n)})}{|p^{(n)}|}\to\overline{F}$ for a suitable sequence of prefixes of $p$.

\smallskip

(b) For every prefix $v$ of $p$, we define $$l_v :
\mathcal{W}(\Omega)\longrightarrow \RR, l_v (x)
:=\widetilde{\sharp_v}(x) \cdot | v |.$$ Then, $-l_v$ is subadditive and, by
assumption, the limit
\begin{equation*}
\nu(v):= \lim_{N\to \infty} \frac{l_v(p(1)\ldots p(N))}{N}.
\end{equation*}
exists. Assume now that $p$ does not satisfy (PQ). Then there exists a sequence $(v_n)$ of prefixes of $p$ with
\begin{equation*}
|v_n| \longrightarrow \infty , \text{ for } n \to \infty, \quad \text{and} \quad \sum_{n=1}^\infty \nu(v_n)<\frac{1}{2} \, .
\end{equation*}
Set $l_n:= l_{v_{n}}$ for $n\in \NN$. By the preceding
considerations  we can choose inductively for each $k\in \NN$ a
number $n(k)$, with
\begin{equation*}
\sum_{j=1}^k \frac{l_{n(j)}(w)}{|w|} <\frac{1}{2}
\end{equation*}
 for every prefix $w$ of $p$ with $|w|\geq\frac{|v_{n(k+1)}|}{2}$. Note that
 the preceding inequality  implies
\begin{equation*}
|v_{n(k)}|< \frac{|v_{n(k + 1)}|}{2}
\end{equation*}
as $\frac{l_{n(k)}(v_{n(k)})}{|v_{n(k)}|}=1$. Define the function
$l:\mathcal{W}(\Omega)\longrightarrow \RR$ by
$$l(w):= \sum_{j=1}^{\infty} l_{n(j)}(w).$$
Note that the sum is actually finite for each $w\in
\mathcal{W}(\Omega)$ (as all but finitely many of its terms vanish).
Obviously, $(-l)$ is subadditive. Thus, by assumption, the limit
$\lim_{|w|\to \infty} \frac{l(w)}{|w|}$ exists. On the other hand,
we clearly have
$$\frac{l(v_{n(k)})}{|v_{n(k)}|}\geq
\frac{l_{n(k)}(v_{n(k)})}{|v_{n(k)}|}\geq 1$$ as well as by the
induction construction
$$\frac{l(w)}{|w|}=\sum_{j=1}^k \frac{l_{n(j)}(w) }{|w|}<\frac{1}{2}$$
for any prefix $w$ of $p$  with $\frac{ |v_{n(k+1)}| }{2} \leq |w|<
|v_{n(k+1)}|$. This gives a contradiction proving (b).
\end{proof}

\begin{lemma}[Combinatorial condition for ($\gamma$)] Let $(\Omega,T)$ be a subshift and $ \omega \in \Omega $. Assume that there exists a sequences  $(w_{k})_{k\in \NN}$, of finite words of increasing length such that $\omega$ looks around the origin as
\[ \omega = ....  w_k |w_k w_k \quad \text{or as } \quad \omega = ....  w_k w_k | w_k \]
for each $ k \in \NN $. Then, for every locally constant $A : \Omega \longrightarrow \SL$ and every $v\in \RR^{2} \setminus \{ 0 \} $, at most one of the limits $ \lim_{n \to \pm \infty} \frac{1}{\lvert n \rvert}\log\|A(n,\omega^{(j)})v\| $ is negative.
\end{lemma}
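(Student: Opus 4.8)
The plan is to argue by contradiction, using the three consecutive copies of $w_k$ together with the Cayley--Hamilton identity in $\SL$. Passing to a subsequence, I may assume that the same one of the two alternatives holds for all $k$; I will treat the first alternative $\omega = \ldots w_k | w_k w_k$, the second being completely symmetric with the roles of the forward and backward directions interchanged. Suppose, for contradiction, that both limits are negative, say equal to $-a$ and $-b$ with $a,b>0$. Writing $\ell_k := \length{w_k}$ and noting that $\ell_k, 2\ell_k \to \infty$, this gives, for all sufficiently large $k$,
\[ \|A(\ell_k,\omega)v\| \le e^{-a\ell_k/2}, \qquad \|A(2\ell_k,\omega)v\| \le e^{-a\ell_k}, \qquad \|A(-\ell_k,\omega)v\| \le e^{-b\ell_k/2} . \]

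The first step is to replace these cocycle products by powers of a single matrix. Let $N$ be such that $A(\omega)$ depends only on $\omega(-N)\ldots\omega(N)$. Since $\omega(-\ell_k)\ldots\omega(2\ell_k-1) = w_k w_k w_k$, for $\ell_k > 2N$ the product $M_k := A(\ell_k,\omega)$ over the middle copy (positions $0,\ldots,\ell_k-1$) is ``clean'': each of its factors only sees letters lying inside the block of three copies. The products over the left and the right copies differ from $M_k$ only through boundary factors supported on the outermost $N$ positions, where the block meets the unknown surroundings. As $A$ is locally constant and the alphabet is finite, these boundary factors range over a finite set of matrices in $\SL$; hence there are $E_k, F_k$ with $\|E_k^{\pm 1}\|, \|F_k^{\pm 1}\| \le C$ uniformly in $k$ such that, using $A(-\ell_k,\omega) = A(\ell_k, T^{-\ell_k}\omega)^{-1}$,
\[ A(-\ell_k,\omega) = E_k^{-1} M_k^{-1} \qquad \text{and} \qquad A(2\ell_k,\omega) = F_k M_k^2 . \]
Combining this with the bounds above, one obtains that $\|M_k v\|$, $\|M_k^2 v\|$ and $\|M_k^{-1} v\|$ all tend to $0$ as $k \to \infty$.

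The second step exploits that $M_k \in \SL$, so $\det M_k = 1$ and Cayley--Hamilton yields $M_k + M_k^{-1} = (\operatorname{tr} M_k)\,I$ as well as $M_k^2 - (\operatorname{tr} M_k) M_k + I = 0$. Applying the first identity to $v$ gives
\[ |\operatorname{tr} M_k|\,\|v\| = \|M_k v + M_k^{-1} v\| \le \|M_k v\| + \|M_k^{-1} v\| \longrightarrow 0 , \]
so $\operatorname{tr} M_k \to 0$. Applying the second identity to $v$ gives $v = (\operatorname{tr} M_k) M_k v - M_k^2 v$, whence
\[ \|v\| \le |\operatorname{tr} M_k|\,\|M_k v\| + \|M_k^2 v\| \longrightarrow 0 , \]
since $\operatorname{tr} M_k \to 0$ and both $\|M_k v\|$ and $\|M_k^2 v\|$ tend to $0$. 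This contradicts $v \ne 0$, so at most one of the two limits can be negative.

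I expect the main obstacle to be the bookkeeping in the first step: verifying that the middle copy is genuinely unaffected by the unknown letters outside the three-copy block, and that the outer copies differ from it only by uniformly bounded boundary matrices, so that the passage from the cocycle $A(\cdot,\omega)$ to the single matrix $M_k$ (and to $M_k^2$, $M_k^{-1}$) is legitimate. Once this identification is in place, the Cayley--Hamilton computation is immediate; it is precisely the presence of \emph{three} copies that makes both $M_k^{-1} v$ and $M_k^2 v$ small simultaneously, and this is what forces the contradiction.
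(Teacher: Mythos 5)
Your proposal is correct and follows essentially the same route as the paper: the Gordon-type argument, applying Cayley--Hamilton in $\SL$ to the matrix over one copy of $w_k$ inside the three consecutive copies, with the passage to the general locally constant case handled (as in the paper) by uniformly bounded boundary matrices coming from the finitely many values of $A$. The only cosmetic difference is that the paper's trace dichotomy yields the quantitative bound $\max\{\|v_{-|w_k|}\|,\|v_{|w_k|}\|,\|v_{2|w_k|}\|\}\geq \tfrac{1}{2}\|v\|$ directly, whereas you argue by contradiction via $\operatorname{tr} M_k \to 0$; both are the same mechanism.
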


\begin{proof} This follows by a variant of the so-called Gordon argument (see e.g. \cite{Dam} for background on this argument in the context of aperiodic order). We provide the details in the case that $A(\omega)$ only depends on $\omega(0)$. The general case follows after some slight modifications invoking Proposition~\ref{p:nutzlich}. Assume that $w_{k}$ exists such that $ \omega = ....  w_k |w_k w_k $ (the proof for $ \omega = ....  w_k w_k | w_k $ is similar). Define $ v_{n} := A(n,\omega) v $ for $ n \in \ZZ $. As $A$ takes values in $\SL$ we obtain from Cayley Hamilton theorem that
\[ A(|w_k|,\omega)^2 - \mbox{tr} ( A(|w_k|,\omega )) A(|w_k|,\omega) + Id = 0 \]
for all $ k \in \NN $. Here $Id$ is the $2\times 2$ identity matrix. Now, by definition of $v_n$ and our assumption we have
\[ A(|w_k|,\omega)^2 v = v_{2|w_k|} \mbox{ and } A(|w_k|,\omega) v = v_{|w_k|}.\]
So, we then obtain
\[ \|v_{2|w_k|} - \mbox{tr} ( A(|w_k|,\omega )) v_{|w_k|} + v\| = 0 \]
and after multiplication by $A(|w_k|,\omega)^{-1}$ then also
\[ \|v_{|w_k|} -\mbox{tr}( A(|w_k|,\omega )) v + v_{-|w_k|}\| = 0.\]
Given these equalities a short computation shows
\[ \max\{\|v_{-|w_k|} \|, \|v_{|w_k|}\|, \|v_{2|w_k|}\|\} \geq \frac{1}{2} \|v\|.\]
Hence, we infer that $\|v_n\|$ cannot tend to zero for $n\to\pm\infty$ if $\|v\|\neq 0$. So, in particular, we cannot have exponential decay of $\|v_n\|$ for $n\to \pm \infty$.

For the general case, where $A(\omega)$ depends on a finite word around the origin, the values of $ A( |w_k| , T^{-|w_k|} \omega) $,  $ A( |w_k| , \omega) $ and $ A( |w_k| , T^{|w_k|} \omega ) $ are not necessarily equal. More precisely, there is a finite number $N$ such that the leftmost and rightmost $N$ matrices in the products may differ. We can now consider $T^N \omega$, which satisfies
\[ A( |w_k| , T^{-|w_k|} \varrho) = A( |w_k| , \varrho) = B \cdot A( |w_k|-2N , T^{|w_k|} \varrho) \, ,\]
where $B$ is a product of $4N$ cocycle values and hence bounded. Similar to the computations above, the Cayley-Hamilton theorem yields
\[ \max \{\|v_{-|w_k|} \|, \|v_{|w_k|}\|, \|v_{2|w_k|}\|\} \geq \frac{ \|v\| }{2 \| B \| } \, .\]
This shows the claim for $T^N \omega$. By Proposition~\ref{p:nutzlich}, $\omega$ and $T^N \omega$ have the same exponential behaviour, which finishes the proof.
\end{proof}

\begin{Remark}\label{rem:absence-eigenvalues}
Note that the proof actually shows that there does not exist a $ v \in \RR^{2} \setminus \{ 0 \} $ such that $\|v_{n}\|$ tends to zero for both $ n \to \infty $ and $ n \to -\infty $. As discussed in Section~\ref{s:spectral}, the Jacobi cocycle $M^{E}$, $E\in\RR$, describes solutions $ u : \ZZ \longrightarrow \CC $ of the equation
\[ f(T^n \omega) u (n-1) + f(T^{n+1}\omega)  u (n+1) + g(T^n \omega) u (n) - E u(n) = 0 \, . \]
Thus, the Jacobi operator $H_\omega$ does not have eigenvalues if $\omega$ satisfies the condition of the Lemma.
\end{Remark}

\begin{prop}[Sufficient condition for (LSC)]\label{p:sufficient}
Let  $(\Omega,T)$ be a uniquely ergodic subshift. Then, $(\Omega,T)$
satisfies (LSC) if there exists a one-sided infinite word $p$ and
$r\in\NN$ and finite words $v^{(1)},\ldots, v^{(r)}$  and the
following conditions hold:

\begin{itemize}

\item[($\alpha'$)]  The words $\omega^{(j)}:= p^R v^{(j)} | p$ all belong to $\Omega$ and condition ($\alpha$) holds.

\item[($\beta'$)] The sequence $p$ satisfies the condition (PQ) and there is a sequence $p^{(n)}$ of prefixes of $p$
with $\lim_n F(p^{(n)}) / |p^{(n)}| \to \overline{F}$ for all
subadditive $F$.

\item[($\gamma'$)] For any $j = 1,\ldots, r$, there exists a sequence  $w_k$, $k\in \NN$, of finite words of increasing length such that $ \omega^{(j)}$ looks around the origin as
$$\omega^{(j)} = ....  w_k |w_k w_k  \mbox{ or as  } \omega^{(j)} = ....  w_k w_k | w_k ...  .$$
\end{itemize}
\end{prop}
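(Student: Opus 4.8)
The plan is to verify the three defining conditions of (LSC) in turn. Condition $(\alpha)$ is part of the hypothesis $(\alpha')$, so there is nothing to prove. Condition $(\gamma)$ is immediate as well: for each $j$ the words $w_k$ supplied by $(\gamma')$ are exactly the hypothesis of the combinatorial condition for $(\gamma)$ proved above, applied to the leading word $\omega^{(j)}$; that lemma then yields, for every locally constant $A$ and every $v \in \RR^2 \setminus \{0\}$, that at most one of the limits $\lim_{n\to\pm\infty}\frac{1}{|n|}\log\|A(n,\omega^{(j)})v\|$ is negative. Hence the entire content of the proposition is condition $(\beta)$, and I describe how to establish it.

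Fix a locally constant $A : \Omega \to \SL$. As in the proof of Theorem~\ref{l:main-technical}, I would first reduce by Proposition~\ref{p:nutzlich} to the case that $A(\omega)$ depends only on $\omega(0)$, and write $A_a \in \SL$ for the value of $A$ on configurations with $\omega(0) = a$. For a finite word $x = x(1)\ldots x(n)$ put $F_A(x) := \log\|A_{x(n)}\cdots A_{x(1)}\|$; submultiplicativity of the operator norm makes $F_A$ subadditive on $\mathcal{W}(\Omega)$. Since the half of every leading word to the right of the origin is $p$, the quantity $\log\|A(n,\omega^{(j)})\|$ equals $F_A(p(1)\ldots p(n))$ up to a bounded error, independently of $j$. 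By $(\beta')$ the sequence $p$ satisfies (PQ) and admits a sequence of prefixes $p^{(n)}$ with $F(p^{(n)})/|p^{(n)}| \to \overline{F}$ for every subadditive $F$, so Lemma~\ref{lem:CombCondB}(a) gives that $\Lambda_+ := \lim_{n\to\infty}\frac{1}{n}\log\|A(n,\omega^{(j)})\|$ exists and is the same for all $j$. For the backward direction I would read $\omega^{(j)} = p^R v^{(j)}|p$ to the left of the origin: this yields $(v^{(j)})^R$ followed again by $p$. Using Proposition~\ref{p:nutzlich}(a) together with $\|B\| = \|B^{-1}\|$ for $B \in \SL$, the quantity $\log\|A(-n,\omega^{(j)})\|$ equals $\tilde F_A(p(1)\ldots p(n-|v^{(j)}|))$ up to an additive constant depending only on the fixed word $v^{(j)}$, where $\tilde F_A(x) := F_A(x^R)$ is again subadditive. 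Applying Lemma~\ref{lem:CombCondB}(a) to $\tilde F_A$ shows that $\Lambda_- := \lim_{n\to\infty}\frac{1}{n}\log\|A(-n,\omega^{(j)})\|$ exists and is likewise independent of $j$, since the finite word $v^{(j)}$ affects only the bounded error and disappears in the limit.

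It remains to prove $\Lambda_+ = \Lambda_-$, and this is the main obstacle: for a general sequence of matrices the forward and backward exponents need not agree, so one must exploit the structure. This is where $(\gamma')$ enters a second time. Fix $j$ and take the words $w_k$ from $(\gamma')$, with $L_k := |w_k| \to \infty$. Let $M_k := A(L_k, \omega^{(j)})$ be the product of matrices along the copy of $w_k$ occupying the positions $0, \ldots, L_k - 1$. In either of the two admissible block patterns $\ldots w_k | w_k w_k$ and $\ldots w_k w_k | w_k$, the positions $-L_k, \ldots, -1$ also carry a copy of $w_k$; reading them in the backward cocycle contributes exactly the inverse product, so that $A(-L_k, \omega^{(j)}) = M_k^{-1}$. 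Since $\|M_k\| = \|M_k^{-1}\|$ in $\SL$, we obtain the exact identity $\|A(L_k, \omega^{(j)})\| = \|A(-L_k, \omega^{(j)})\|$ for every $k$. Dividing by $L_k$ and letting $k \to \infty$, the left-hand side tends to $\Lambda_+$ and the right-hand side to $\Lambda_-$ (both limits exist by the previous paragraph), whence $\Lambda_+ = \Lambda_-$.

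Putting the pieces together, all $2r$ limits occurring in $(\beta)$ exist and share the common value $\Lambda_+ = \Lambda_-$, so $(\beta)$ holds and the subshift satisfies (LSC). For a genuinely locally constant $A$, depending on a window $\omega(-N)\ldots\omega(N)$, the identity $A(-L_k,\omega^{(j)}) = M_k^{-1}$ holds only up to a bounded factor stemming from the at most $4N$ boundary matrices, exactly as in the general case of the combinatorial condition for $(\gamma)$ and in Proposition~\ref{p:nutzlich}; as this factor is $O(1)$ it disappears after division by $L_k$, leaving the argument intact.
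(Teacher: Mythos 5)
Your proposal is correct, and its skeleton coincides with the paper's: $(\alpha)$ is immediate from $(\alpha')$, $(\gamma)$ follows by citing the Gordon-type lemma with the words $w_k$ from $(\gamma')$, and the existence of the forward and backward exponents is obtained from Lemma~\ref{lem:CombCondB}(a), applied once to a subadditive function induced by the cocycle and once to its reflection. Where you genuinely deviate is the final step, $\Lambda_+=\Lambda_-$. The paper deduces from $\omega^{(j)}=p^R v^{(j)}|p$ and $(\gamma')$ that the prefixes of $p$ of length $|w_k|-|v^{(j)}|$ are palindromes, and then equates the two (already existing) limits along this subsequence, since a subadditive function and its reflection agree on palindromes. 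You instead argue at the matrix level: in either block pattern of $(\gamma')$ the positions $-|w_k|,\dots,-1$ carry the same word $w_k$ as the positions $0,\dots,|w_k|-1$, whence $A(-|w_k|,\omega^{(j)})=A(|w_k|,\omega^{(j)})^{-1}$, and $\|B\|=\|B^{-1}\|$ in $\SL$ gives equality of the norms along the subsequence. The two arguments encode the same combinatorial fact, but yours buys a small simplification: since you define $F_A$ directly from letters (a product of matrices), its reflection $\tilde F_A(x)=F_A(x^R)$ is defined on all of $\mathcal{W}(\Omega)$ without further ado, whereas the paper's $F$ (a maximum over $\omega\in\Omega$ with prescribed prefix) forces it to first prove that the language is closed under reflection --- which the paper does by invoking minimality, a hypothesis that Proposition~\ref{p:sufficient} does not actually list (only unique ergodicity). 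Both you and the paper treat the passage from window-size one to general locally constant $A$ at the same level of detail, deferring to Proposition~\ref{p:nutzlich} and bounded boundary factors, so there is no gap on that point either.
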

\begin{proof} Clearly ($(\alpha')$ implies validity of ($\alpha$).
Thus, it remains to  show that the subshift satisfies ($\beta$) and
($\gamma$):

\smallskip

\textit{The subshift satisfies ($\beta$):} We first  gather a few
simple consequences of ($\beta'$) and ($\gamma'$). Clearly, the
subshift is palindromic (i.e. $w\in \mathcal{W}(\Omega)$ implies
$w^R\in\mathcal{W}(\Omega)$). Indeed, by minimality any $w\in
\mathcal{W}(\Omega)$ must appear in $p$ infinitely often and then
$w^R$ appears in $p^R$ infinitely often. Moreover, with $F :
\mathcal{W}(\Omega)\longrightarrow \RR$ also the function
$$F^R :\mathcal{W}(\Omega) \longrightarrow \RR, F^R (w) := F(w^R),$$
is subadditive. Hence, by (PQ) and Lemma~\ref{lem:CombCondB} (a)  for any
subadditive $F$ also the limit
$$\lim_{n\to\infty} \frac{F(p(n)\ldots p(1))}{n} $$
exists. If $F$ is induced by a locally constant cocycle $A$ via
$$F(w) = \max\{ \log\|A(|w|,\omega)\| : \omega \in \Omega \mbox{
with } \omega(1)\ldots \omega (|w|) = w\}$$ it is easy to see from
Proposition \ref{p:nutzlich} that
$$\Lambda_+:=\lim_{n\to \infty} \frac{ \log\|A(n,\omega^{(j)})\|}{n} =
\lim_{n\to \infty} \frac{F(p(1)\ldots p(n))}{n}$$ and
$$\Lambda_-:=\lim_{n\to \infty} \frac{ \log\|A(-n,\omega^{(j)})\|}{n} =
\lim_{n\to \infty} \frac{F(p(n)\ldots p(1))}{n}$$ both exist. Moreover, since $\omega^{(j)}= p^R v^{(j)} | p$ and ($\gamma'$) yield
\[ p( |w_k| - |v^{(j)}|) \ldots p( 1 )  =  p( 1 ) \ldots p( |w_k| - |v^{(j)}|) \]
and the words $w_k$ get arbitrary large, we must have $\Lambda_+ = \Lambda_-$. This gives ($\beta$).

\smallskip

\textit{The subshift satisfies ($\gamma$):} This follows from the previous lemma.
\end{proof}

%\begin{Remark} It would be very interesting to have an example
%where ($\gamma$) fails. Can we show a dichotomy here: either
%interesting because ($\gamma$) holds or interesting because it
%fails?
%\end{Remark}

\section{Simple Toeplitz subshifts satisfy (LSC)}\label{s:simple}
In this section we consider simple Toeplitz subshifts. It is well-known that these are aperiodic. We show that they satisfy (LSC). Then, by Theorem \ref{l:main-technical} every locally constant
cocycle over such a subshift is uniform and by Theorem
\ref{t:main-abstract} the associated Jacobi operators have
Cantor spectrum. As discussed in the introduction, this generalizes
the results of \cite{LiuQu} and can be seen as the main results of
this article. We only have to show that simple Toeplitz subshifts
satisfy the conditions of Proposition \ref{p:sufficient}.

\bigskip

First, we recall the definition of a (simple) Toeplitz subshift:
Let
$ \omega \in \mathcal{A}^{\ZZ} $ be a two-sided infinite word such
that for all $ j \in \ZZ $ there exists $ p \in \NN $ with $ \omega(
j+kp ) = \omega( j ) $ for all $ k \in \ZZ $. The orbit closure of
such an $\omega$ under the shift action is called a Toeplitz
subshift. For more details, we refer the reader to the survey
\cite{Downarowicz} and the references therein. Here, we will only
consider the subclass of so called simple Toeplitz subshifts, which
exhibit additional structure and are defined as follows: Let
$(a_{k})_{k} \in \mathcal{A}^{\NN \cup \{ 0 \}} $ be a sequence of
letters and $ (n_{k})_{k} \in ( \NN \setminus \{1\} )^{\NN \cup \{ 0
\}}$ a sequence of period lengths that are greater or equal two.
Those sequences are called \emph{coding sequences} of a simple
Toeplitz subshift. Let $\widetilde{\mathcal{A}}$ denote the
\emph{eventual alphabet}, that is, the set of letters that appear
infinitely often in $(a_{k})$. In the following, we will always
assume $\# \widetilde{\mathcal{A}} \geq 2$ in order to exclude
periodic words. Moreover, we assume $ a_{k+1} \neq a_{k} $, since
consecutive occurrences of the same letter can be expressed as a
single occurrence if $n_{k}$ is increased accordingly. We use
$\widetilde{K}$ to denote a number such that $ a_{k} \in
\widetilde{\mathcal{A}} $ holds for all $ k \geq \widetilde{K}$. We
define the subshift from palindromic blocks: Let
\[ \block{-1} := \epsilon \qquad \text{and} \qquad \block{k+1} := \block{k} a_{k+1} \block{k} \hdots \block{k} a_{k+1} \block{k} \]
with $n_{k+1}$-many $\block{k}$-blocks and $(n_{k+1}-1)$-many
$a_{k+1}$'s. Clearly, $\block{k}$ is a palindrome for every $k \in
\NN \cup \{ 0 \}$. Note that $ \length{\block{k+1}} + 1= n_{k+1}
(\length{\block{k}} + 1)$ holds for all $k \geq -1 $, which implies
$ \length{\block{k+1}} + 1 = \prod_{j=0}^{k+1} n_{j}$. Moreover,
$\block{k}$ converges to a one-sided infinite word $\block{\infty}
:= \lim_{k \to \infty} \block{k}$. Define $ \Omega := \{ \omega \in
\mathcal{A}^{\ZZ} : \mathcal{W}( \omega ) \subseteq
\mathcal{W}(\block{\infty}) \} $.

Alternatively, simple Toeplitz subshifts can also be defined via a ``hole filling procedure''. As this is instructive, we include some details next (see Section \ref{s:family} and \cite{LiuQu} as well): Let $ ? \notin \mathcal{A} $ be an additional letter which represents the ``hole''. In addition to the sequences $ (a_{k}) $ and $ (n_{k}) $ from above, let $ (r_{k})_{k \in \NN \cup \{ 0 \}} $ be a sequence of integers with $ 0 \leq r_{k} < n_{k} $. We define the two-sided infinite, periodic words
\[ (a_{k}^{n_{k}-1}?)^{\infty} := \hdots a_{k} \hdots a_{k} ? \underbrace{a_{k} \hdots a_{k}}_{n_{k}-1 \text{-times}} ? a_{k} \hdots a_{k} ? \hdots \]
with period length $n_{k}$ and holes at $ n_{k} \ZZ + r_{k} $. We now insert $(a_{1}^{n_{1}-1}?)^{\infty}$ into the holes of $(a_{0}^{n_{0}-1}?)^{\infty}$, that is, we define a new word $ (a_{0}^{n_{0}-1}?)^{\infty} \triangleleft (a_{1}^{n_{1}-1}?)^{\infty} $ by
\[ ((a_{0}^{n_{0}-1}?)^{\infty} \triangleleft (a_{1}^{n_{1}-1}?)^{\infty})(j) := \begin{cases}
(a_{0}^{n_{0}-1}?)^{\infty}(j) & \text{for } j \notin n_{0} \ZZ + r_{0}\\
(a_{1}^{n_{1}-1}?)^{\infty}( \frac{j-r_{0}}{n_{0}} ) & \text{for } j \in n_{0} \ZZ + r_{0}
\end{cases} \, . \]
By inserting $(a_{2}^{n_{2}-1}?)^{\infty}$ into the holes of the obtained word, then inserting $(a_{3}^{n_{3}-1}?)^{\infty}$, and so on, we obtain a sequence of two-sided infinite words
\[ \omega_{k} := (a_{0}^{n_{0}-1}?)^{\infty} \triangleleft (a_{1}^{n_{1}-1}?)^{\infty} \triangleleft (a_{2}^{n_{2}-1}?)^{\infty} \triangleleft \hdots  \triangleleft (a_{k}^{n_{k}-1}?)^{\infty} \, . \]
When we take the limit $ \omega_{\infty} := \lim_{k \to \infty} \omega_{k} $ in $ ( \mathcal{A} \cup \{ ? \} )^{\ZZ} $, there is at most one position where $ \omega_{\infty} $ has a hole. If such a position exists, then we fill the hole by an arbitrary letter from $ \widetilde{\mathcal{A}} $. If now $\omega \in \mathcal{A}^{\ZZ} $ denotes the word that was obtained this way, we define the simple Toeplitz subshift as $ \Omega := \overline{ \{ T^{k} \omega : k \in \NN \} } $. It is equal to the subshift that was defined above in terms of $\block{\infty}$ (see for example \cite{Sell_SimpToepCombinat}, Proposition~2.6).

It was shown in \cite{LiuQu}, Corollary~2.1 that every simple
Toeplitz subshift $(\Omega_{\omega}, T)$ is minimal and uniquely
ergodic. In addition, $ \# \widetilde{\mathcal{A}} \geq 2 $ implies
that every simple Toeplitz word defined by $(a_{k})_{k}$ is
non-periodic (see for example \cite{Sell_SimpToepCombinat},
Proposition~2.2). Conversely, $\# \widetilde{\mathcal{A}} =1 $
clearly gives periodicity of the subshift. In Proposition~2.4 in
\cite{LiuQu} it  was shown that every element in the subshift can be
obtained by the hole filling procedure with the same coding
sequences $(a_{k})$ and $(n_{k}) $. From this, it easily follows
that for every $k \in \NN \cup \{ 0 \}$ and every $ \omega \in
\Omega $, there is a unique decomposition of $\omega$ in the form
\[ \omega = \hdots \block{k} \star \block{k} \star \block{k} \star \block{k} \hdots \quad , \]
where $ \star $ denotes elements from $ \{ a_{j} : j \geq k+1 \} $.

To prove that simple Toeplitz subshifts satisfy (LSC), we will show
that they satisfy the sufficient conditions of
Proposition~\ref{p:sufficient}. This will be done in two  steps. In
the first one, we discuss the words $\omega^{(j)}$ and study their
combinatorial properties. In the second one, we treat asymptotic averages of subadditive functions.

We start with the discussion of the words $\omega^{(j)}$.
Consider the one-sided infinite word $ p = \block{\infty} $.
With $ r = \# \widetilde{\mathcal{A}} $,
we write $ \widetilde{\mathcal{A}} = \{ a^{(1)}, \hdots , a^{(r)} \} $
and define the words $v^{(j)}$ of length one by $v^{(j)} = a^{(j)}$. Then for every $j$, there are arbitrary large $k$ with $a_{k+1} = a^{(j)}$. Thus the words $ \block{k} a^{(j)} \block{k} $ occur in the subshift and hence all
$$\omega^{(j)}:= p^R v^{(j)} | p
= (\block{\infty})^R a^{(j)} | \block{\infty} $$
belong to $\Omega$.
Next we check that all sufficiently long finite words occur in some
$\omega^{(j)}$ close to the origin. Recall that $\widetilde{K}$
denotes a number such that $ a_{k} \in \widetilde{\mathcal{A}} $
holds for all $ k \geq \widetilde{K}$.

\begin{prop}[Occurrence of words around the origin]
For all $ L \geq \length{\block{\widetilde{K}}} $ and all $ u \in
\mathcal{W}(\Omega)_{L} $, there are $ j \in \{ 1 , \hdots , r \} $
and $ k \in \{ 1, \ldots, L\}$ such that $ u = \omega^{(j)}(-k)
\ldots \omega^{(j)}(-k + L-1) $ holds.
\end{prop}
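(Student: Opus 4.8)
The plan is to translate the statement into a condition on the one-sided word $p = \block{\infty}$ and then to locate $u$ symmetrically inside a single block. Since $\omega^{(j)} = p^R a^{(j)} \mid p$, reading off its letters around the origin gives $\omega^{(j)}(n) = p(n+1)$ for $n \ge 0$, $\omega^{(j)}(-1) = a^{(j)}$ and $\omega^{(j)}(-n) = p(n-1)$ for $n \ge 2$. The requirement $k \in \{1,\dots,L\}$ forces the window $[-k, -k+L-1]$ to contain the position $-1$, so a word realised this way is exactly one of the form
\[ \big(p(1)\ldots p(k-1)\big)^R \, a^{(j)} \, p(1)\ldots p(L-k). \]
Hence it suffices to show that every $u \in \mathcal{W}(\Omega)_L$ with $L \ge \length{\block{\widetilde{K}}}$ can be written as $w^R a^{(j)} w'$ with $a^{(j)} \in \widetilde{\mathcal{A}}$ and $w, w'$ prefixes of $p$; one then reads off $k = \length{w}+1$.

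Because $\Omega = \{\omega : \mathcal{W}(\omega) \subseteq \mathcal{W}(\block{\infty})\}$, the word $u$ occurs in $p$, and I fix one such occurrence $u = p(t+1)\ldots p(t+L)$. First I would choose the level at which to centre $u$: let $m$ be the smallest integer such that this occurrence lies inside a single block $\block{m}$ of the level-$m$ decomposition $p = \block{m} \star \block{m} \star \ldots$, where $\star \in \{a_l : l \ge m+1\}$. Such an $m$ exists because, for $m$ large, $\length{\block{m}} \ge L$ and $u$ meets no separator of level greater than $m$. By minimality of $m$, the occurrence is not contained in a single $\block{m-1}$-block, so it crosses an interior separator of the surrounding $\block{m}$-block; this separator is a letter $a_m$, and I denote its position by $i^\ast$ and set $k := i^\ast - t \in \{1, \ldots, L\}$.

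Two things now follow. From $u \subseteq \block{m}$ I get $L \le \length{\block{m}}$, and combined with $L \ge \length{\block{\widetilde{K}}}$ this forces $m \ge \widetilde{K}$, hence $a_m \in \widetilde{\mathcal{A}}$, so $a_m = a^{(j)}$ for some $j$. For the symmetric structure, recall that $p$ begins with $\block{m} = \block{m-1} a_m \block{m-1} a_m \ldots \block{m-1}$ and that $\block{m-1}$ is a palindrome. To the right of $i^\ast$, and still inside the $\block{m}$-block, $p$ reads $\block{m-1} a_m \block{m-1} a_m \ldots$, which is a prefix of $p$; since $u$ does not leave the block, $u(k+1)\ldots u(L) = p(1)\ldots p(L-k)$. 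Reading to the left of $i^\ast$ inside the block produces the same string $\block{m-1} a_m \block{m-1} \ldots$, and as $\block{m-1}$ is a palindrome it equals its reversal, so $u(1)\ldots u(k-1) = (p(1)\ldots p(k-1))^R$. Together these give $u = w^R a_m w'$ with $w = p(1)\ldots p(k-1)$ and $w' = p(1)\ldots p(L-k)$ prefixes of $p$, completing the proof.

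The step I expect to be the crux is exactly the choice of the centring separator, because its two required features pull against each other: to sit in $\widetilde{\mathcal{A}}$ and to have flanking blocks long enough to cover all of $u$ one wants a high-level separator, yet high-level separators are sparse and a short $u$ need not contain one. Taking the \emph{smallest} level $m$ that still confines $u$ to one $\block{m}$-block resolves the tension in one stroke: it guarantees an $a_m$ inside $u$ (from non-containment at level $m-1$), it yields $L \le \length{\block{m}}$ and hence $m \ge \widetilde{K}$, and it makes the flanking $\block{m-1}$-blocks automatically long enough, so that palindromicity of $\block{m-1}$ and the fact that $p$ opens with $\block{m}$ deliver the prefix / reflected-prefix form. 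The degenerate case $m = 0$ (where $u$ is a power of a single letter and the $\block{-1}$-blocks are empty) is covered by the same argument.
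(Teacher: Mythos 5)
Your proof is correct and follows essentially the same route as the paper's: both locate an occurrence of a letter $a^{(j)}$ of the eventual alphabet inside $u$, arising as a high-level separator of the block decomposition, and align it with $\omega^{(j)}(-1)$, the hypothesis $L \geq \length{\block{\widetilde{K}}}$ guaranteeing that the separator's level is at least $\widetilde{K}$. The differences are organizational rather than conceptual: the paper fixes the level via $\length{\block{k}} < L \leq \length{\block{k+1}}$ and splits into two cases (inside a block versus crossing a higher separator), whereas you choose the minimal confining level, which merges the two cases into one, work directly with an occurrence in $\block{\infty}$ rather than in an arbitrary $\omega \in \Omega$, and spell out the palindrome-based alignment that the paper leaves implicit.
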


\begin{proof}
For every $ u \in \mathcal{W}(\Omega)_{L} $ there exists an $\omega
\in \Omega$ such that $u$ occurs in $\omega$. Let $k$ denote the
unique number such that $ \length{\block{k}} < L \leq
\length{\block{k+1}} $ holds and decompose $\omega$ as $ \omega =
\hdots \block{k+1} \star \block{k+1} \star \block{k+1} \hdots $ with
single letters $ \star \in \{ a_{j} : j \geq k+2 \} $. We
distinguish two cases: Firstly, if $u$ is completely contained in
$\block{k+1} = \block{k} a_{k+1} \block{k} \hdots \block{k} $, then
$u$ contains at least once the single letter $a_{k+1}$. Choose $j$
such that $ a^{(j)} = a_{k+1} $ holds. Around the origin,
$\omega^{(j)}$ has the form
\[ \hdots \block{k+1} a^{(j)} | \block{k+1} \hdots = \hdots \block{k} a_{k+1} \block{k} \hdots \block{k} a_{k+1} | \block{k} a_{k+1} \block{k} \hdots \block{k} \hdots \quad . \]
Now the claim follows by aligning an occurrence of $a_{k+1}$ in $u$
with $\omega^{(j)}(-1)$. Secondly, if $u$ is not contained in a
single $\block{k+1}$-block, then there is a letter $ a \in \{ a_{j}
: j \geq k+2 \} \subseteq \widetilde{\mathcal{A}} $ such that $u$ is
contained in $ \block{k+1} a \block{k+1} $ and $a$ is contained in
$u$. Choose $j$ such that $ a^{(j)} = a $. Around the origin,
$\omega^{(j)}$ is of the form $ \hdots \block{k+1} a^{(j)} |
\block{k+1} \hdots $ and thus $ u $ occurs in $\omega^{(j)}$ as
claimed.
\end{proof}

Finally, we check property ($\gamma'$) of the sufficient conditions
in Proposition~\ref{p:sufficient}:

\begin{prop}[Occurrences of three blocks] For any $j=1,\ldots r$, there exists a sequence $w_i$ of words of increasing length such that $\omega^{(j)} = ...w_i w_i |w_i...$ holds for all $i \in \NN$.
\end{prop}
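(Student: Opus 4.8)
The plan is to take each $w_i$ to be a block $\block{k_i}$ with the single letter $a^{(j)}$ appended, and then to read off three consecutive copies of it straddling the origin of $\omega^{(j)}$. First I would fix $j\in\{1,\ldots,r\}$ and use that $a^{(j)}\in\widetilde{\mathcal{A}}$, which by definition of the eventual alphabet means that $a^{(j)}$ occurs infinitely often in the coding sequence $(a_k)$. Hence there is a strictly increasing sequence of indices $k_i$ with $a_{k_i+1}=a^{(j)}$ for all $i\in\NN$. I then set
$$ w_i := \block{k_i}\,a^{(j)} = \block{k_i}\,a_{k_i+1}, $$
so that $\length{w_i}=\length{\block{k_i}}+1\to\infty$ as $i\to\infty$, which supplies the required words of increasing length.

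Next I would record the shape of $\omega^{(j)}$ near the origin. Since $\omega^{(j)}=p^R\,a^{(j)}\,|\,p$ with $p=\block{\infty}$, and since $\block{k_i+1}$ is a palindrome and a prefix of $p$, the same computation as in the preceding proposition shows that around the origin
$$ \omega^{(j)} = \ldots\, \block{k_i+1}\,a^{(j)}\,|\,\block{k_i+1}\,\ldots, $$
with $a^{(j)}=a_{k_i+1}$ at position $-1$, the right copy of $\block{k_i+1}$ occupying positions $0,\ldots,\length{\block{k_i+1}}-1$, and the left copy occupying positions $-1-\length{\block{k_i+1}},\ldots,-2$.

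Finally I would expand both copies of $\block{k_i+1}$ via the recursion $\block{k_i+1}=\block{k_i}\,a_{k_i+1}\,\block{k_i}\,a_{k_i+1}\cdots\block{k_i}$ (with $n_{k_i+1}\geq 2$ blocks $\block{k_i}$ separated by $a_{k_i+1}$'s) and read the letters from position $-2\length{w_i}$ to position $\length{w_i}-1$. The right copy begins with $\block{k_i}\,a_{k_i+1}=w_i$ on positions $0,\ldots,\length{w_i}-1$. On the left, the left copy of $\block{k_i+1}$ \emph{ends} in $\block{k_i}$ (not in $a_{k_i+1}$), while the letter at position $-1$ is exactly the separating $a^{(j)}=a_{k_i+1}$; consequently the last $\block{k_i}$ of the left copy together with this central letter gives $w_i$ on positions $-\length{w_i},\ldots,-1$, and the preceding $\block{k_i}$ together with its following separator gives a further $w_i$ on positions $-2\length{w_i},\ldots,-\length{w_i}-1$. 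This exhibits $\omega^{(j)}=\ldots\,w_i w_i\,|\,w_i\,\ldots$, which is precisely the second alternative in condition $(\gamma')$.

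The step needing the most care is this left-hand alignment. The key point is that the phase is right because the left $\block{k_i+1}$ terminates with a full $\block{k_i}$ and the separating $a_{k_i+1}$ lands exactly at position $-1$, so that the factor $\block{k_i}\,a_{k_i+1}$ repeats leftward; the standing assumption $n_{k_i+1}\geq 2$ then guarantees that the left $\block{k_i+1}$ contains at least two blocks $\block{k_i}$, so that two full copies of $w_i$ fit to the left of the origin (in the extreme case $n_{k_i+1}=2$ the leftmost copy exactly reaches the left boundary of that $\block{k_i+1}$-block).
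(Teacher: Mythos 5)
Your proof is correct and is essentially the paper's own argument: up to an index shift (the paper picks $k_i$ with $a_{k_i}=a^{(j)}$ and sets $w_i=\block{k_i-1}a_{k_i}$, while you pick $k_i$ with $a_{k_i+1}=a^{(j)}$ and set $w_i=\block{k_i}a_{k_i+1}$), both proofs use the palindromic prefix structure to write $\omega^{(j)}=\ldots\block{k}a^{(j)}|\block{k}\ldots$ and then expand the blocks via the recursion to read off $\ldots w_iw_i|w_i\ldots$. Your extra care about the left-hand alignment and the case $n_{k_i+1}=2$ is a correct, slightly more detailed account of the same step the paper dispatches in one displayed equation.
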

\begin{proof} For every $ a^{(j)} \in \widetilde{\mathcal{A}}$, there is an
increasing sequence $k_{i}$ with $ a_{k_{i}} = a^{(j)} $. Consider
the finite words $ w_{i} = \block{k_{i}-1} a_{k_{i}} $. Since every $\block{k}$ is a prefix as well as a suffix of $\block{k+1}$, the element $\omega^{(j)}$ looks around the origin like
\[ \hdots \block{k_{i}} a^{(j)} | \block{k_{i}} \hdots = \hdots \block{k_{i}-1} a_{k_{i}} \block{k_{i}-1} a_{k_{i}} | \block{k_{i}-1} a_{k_{i}} \hdots = \hdots w_{i} w_{i} | w_{i} \; . \qedhere \]
\end{proof}

After having discussed the words $\omega^{(j)}$, we now discuss averages of subadditive functions.  First we show that the blocks
$\block{k}$ are prefixes with the limit property that is required in
condition $(\beta ')$ in Proposition~\ref{p:sufficient}.

\begin{prop}
Let $F : \mathcal{W}(\Omega)\longrightarrow \RR$ be a subadditive
function and let
 $$ \overline{F} := \lim_{L \to \infty} \max_{x :
\length{x} = L} \frac{F( x )}{L}.$$
 Then $\lim_{k \to \infty}
\frac{F( \block{k} )}{\length{ \block{k} }} = \overline{F}$ holds.
\end{prop}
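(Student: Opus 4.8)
The plan is to establish the two one-sided bounds $\limsup_k \frac{F(\block{k})}{\length{\block{k}}} \le \overline{F}$ and $\liminf_k \frac{F(\block{k})}{\length{\block{k}}} \ge \overline{F}$ separately. Throughout I write $L_k := \length{\block{k}}$ and $c := \max_{a\in\mathcal{A}} F(a)$, and I record the two elementary facts about $\phi(n) := \max_{\length{x}=n} F(x)$ that $\overline{F}\, n \le \phi(n) \le c\, n$ for all $n$ — the left inequality because $\overline{F} = \inf_n \phi(n)/n$, the right because $\phi$ is subadditive with $\phi(1)=c$.

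The upper bound is immediate: $\block{k}$ is one of the words of length $L_k$, so $F(\block{k}) \le \phi(L_k)$ and hence $F(\block{k})/L_k \le \phi(L_k)/L_k \to \overline{F}$, giving $\limsup_k F(\block{k})/L_k \le \overline{F}$.

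For the lower bound — which is the real content — I would argue by contradiction, assuming a $\delta>0$ and a subsequence of $k$'s along which $F(\block{k}) \le (\overline{F} - \delta)L_k$. Fixing such a $k$, I take for each large $N$ a maximizing word $w$ of length $N$, so that $F(w) = \phi(N) \ge \overline{F} N$. Since $w \in \mathcal{W}(\Omega)$ it is a factor of some $\omega\in\Omega$, and the unique $\block{k}$-decomposition $\omega = \ldots\block{k}\star\block{k}\star\ldots$ (with single-letter separators $\star$) cuts $w$ into $m$ complete copies of $\block{k}$, at most $m+1$ single letters, and two boundary fragments of length below $L_k$. Subadditivity of $F$ then gives
$$F(w) \le 2cL_k + m\,F(\block{k}) + (m+1)c .$$
Feeding in $F(w)\ge \overline{F} N$ and $F(\block{k}) \le (\overline{F}-\delta)L_k$, dividing by $N$, and letting $N\to\infty$ with $k$ held fixed (so that $m/N \to 1/(L_k+1)$ and the $2cL_k/N$ term disappears), I obtain $\overline{F} \le \frac{(\overline{F}-\delta)L_k + c}{L_k+1}$, which rearranges to $\delta L_k \le c - \overline{F}$. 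As $L_k\to\infty$ along the subsequence while the right-hand side is constant, this is the desired contradiction, so $\liminf_k F(\block{k})/L_k \ge \overline{F}$ and the limit equals $\overline{F}$.

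The hard part is the lower bound, and the difficulty is structural: subadditivity only ever produces upper bounds for $F$, so obtaining a lower bound on $F(\block{k})$ looks impossible head-on. The device that resolves this is to push the a priori lower bound $\phi(N)\ge \overline{F} N$ for a near-optimal long word onto its $\block{k}$-decomposition; since that word consists of about $N/(L_k+1)$ copies of $\block{k}$ plus negligibly many separator letters, subadditivity forces the individual blocks to carry almost all of the value, i.e.\ $F(\block{k})/L_k \ge \overline{F} - O(1/L_k)$. The one point demanding care is the order of limits: one must send $N\to\infty$ first (so the $O(L_k)$ boundary contribution is killed by dividing by $N$) and only afterwards let $k\to\infty$ to make the $O(1/L_k)$ error vanish.
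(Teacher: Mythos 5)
Your proof is correct and is essentially the paper's own argument: the upper bound is read off from the definition of $\overline{F}$, and the lower bound comes from cutting a long (near-)maximizing word along the unique decomposition $\omega = \hdots \block{k} \star \block{k} \star \block{k} \hdots$ into complete blocks, single-letter separators and two short boundary fragments, applying subadditivity, and sending the word length to infinity before letting $k \to \infty$. The only difference is presentational: you run the lower bound as a contradiction with an explicit $\delta$, whereas the paper derives the equivalent inequality $\overline{F} \leq \frac{F(\block{k})}{\length{\block{k}}} + \frac{2D}{\length{\block{k}}}$ directly and then takes $k \to \infty$.
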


\begin{proof}
By definition of $\overline{F}$, we have $ \limsup_{k \to \infty}
\frac{F( \block{k} )}{\length{ \block{k} }} \leq \overline{F} $.
Therefore, it only remains to show that $ \liminf_{k \to \infty}
\frac{F( \block{k} )}{\length{ \block{k} }} \geq \overline{F} $
holds: Let $ D := \max \{ F( a ) : a \in \mathcal{A} \} $ and fix an
arbitrary $ k \in \NN $. Let $ L \geq \length{\block{k}} $ and let
$x$ be an arbitrary word of length $L$. Then $x$ is contained in
some element $\omega \in \Omega$ and every $\omega$ can be
decomposed as $ \omega = \hdots \block{k} \star \block{k} \star
\block{k} \hdots $ with single letters $ \star \in \mathcal{A} $. We
obtain $ x = u \star \block{k} \star \block{k} \hdots \block{k}
\star v $, where $u$ is a suffix and $v$ is a prefix of $\block{k}$.
Note that we have at most $\frac{L}{\length{\block{k}}}$ blocks
$\block{k}$ and $\frac{L}{\length{\block{k}}} + 1 <
\frac{2L}{\length{\block{k}}}$ single letters $\star$ in $x$. Hence
we obtain
\begin{align*}
\frac{F( x )}{L}
& \leq \frac{F( u )}{L} + \frac{\frac{2L}{\length{\block{k}}} \cdot D}{L} + \frac{\frac{L}{\length{\block{k}}} \cdot F( \block{k} )}{L} + \frac{F( v )}{L}\\
&\leq \frac{\length{\block{k}} D}{L} + \frac{2
D}{\length{\block{k}}} + \frac{F( \block{k} )}{\length{\block{k}}} +
\frac{\length{\block{k}} D}{L} \, .
\end{align*}
Since $x$ was arbitrary, the above yields for every $k$ and every $L
\geq \length{\block{k}}$ the inequality
\[ \max_{x : \length{x} = L} \frac{F( x )}{L} \leq \frac{\length{\block{k}} D}{L} + \frac{2 D}{\length{\block{k}}} + \frac{F( \block{k} )}{\length{\block{k}}} + \frac{\length{\block{k}} D}{L} \, . \]
In particular we can take the limit $ L \to \infty$ and obtain for
every $k$
\[\overline{F} \leq \frac{2 D}{\length{\block{k}}} + \frac{F( \block{k} )}{\length{\block{k}}} \quad \text{and thus } \quad \overline{F} \leq \liminf_{k \to \infty} \frac{F( \block{k} )}{\length{\block{k}}} \, . \qedhere\]
\end{proof}

To prove (LSC) for simple Toeplitz subshifts, it only remains to
show that condition (PQ) is satisfied.

\begin{prop}[Validity of (PQ)]
For every prefix $v$ of $\block{\infty}$, the inequality
\[ \liminf_{L \to \infty} \frac{ \widetilde{\#}_{v} \block{\infty}(1) \hdots \block{\infty}(L) }{L} \length{v} \geq \frac{1}{8} \]
holds, that is, the sequence $\block{\infty}$ satisfies (PQ).
\end{prop}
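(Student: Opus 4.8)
The plan is to fix a prefix $v$ of $\block{\infty}$ and to produce, inside the first $L$ letters of $\block{\infty}$, at least about $\frac{L}{8\length{v}}$ pairwise disjoint copies of $v$. First I would locate the relevant scale: let $k$ be the unique index with $\length{\block{k-1}} < \length{v} \leq \length{\block{k}}$. Since $v$ and $\block{k}$ are both prefixes of $\block{\infty}$ and $\length{v}\le\length{\block{k}}$, the word $v$ is a prefix of $\block{k}$; writing $d := \length{\block{k-1}}+1$, one also has $\length{v}\geq d$. Using the unique decomposition $\block{\infty} = \block{k} \star \block{k} \star \cdots$ into $\block{k}$-blocks separated by single letters (as recorded above for every element of the subshift), the first $L$ letters contain at least $\frac{L}{\length{\block{k}}+1}-1$ complete $\block{k}$-blocks. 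Copies of $v$ found inside distinct $\block{k}$-blocks are automatically disjoint, so everything reduces to a lower bound on the number of disjoint copies of $v$ inside a single $\block{k}$-block.

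For the count inside one block I would exploit the self-similar form $\block{k} = (\block{k-1} a_k)^{n_k-1}\block{k-1}$. The suffix of $\block{k}$ starting at position $id$ (the $i$-th $\block{k-1}$-boundary) equals $(\block{k-1}a_k)^{n_k-1-i}\block{k-1}$, which is again a prefix of $\block{k}$; hence $v$ occurs at every position $id$ with $0 \leq i \leq M$, where $M := \lfloor (\length{\block{k}}-\length{v})/d \rfloor$, and each such copy lies entirely inside the block. These candidate starts are equally spaced by $d \leq \length{v}$, so a greedy left-to-right selection (take the next candidate at distance $\geq \length{v}$ from the last chosen one) yields a set of pairwise disjoint copies of cardinality $D \geq \max\{1,\, Md/(2\length{v})\}$; the two bounds come respectively from always keeping at least one copy and from the fact that each chosen copy blocks a window of length less than $2\length{v}$ for the next.

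It remains to turn this into the constant $\tfrac18$. Using $\max\{1,x\} \geq \tfrac12(1+x)$ together with $Md \geq \length{\block{k}}-\length{v}-d$ and $\length{v} \geq d$, a short computation gives $D\,\length{v} \geq \tfrac14(\length{\block{k}}+\length{v}-d) \geq \tfrac14\length{\block{k}}$, whence the per-block density satisfies $\frac{D\,\length{v}}{\length{\block{k}}+1} \geq \frac{\length{\block{k}}}{4(\length{\block{k}}+1)} \geq \tfrac18$, since $\length{\block{k}} \geq 1$. Multiplying by the number of complete blocks and letting $L \to \infty$ (with $k$, $v$ and $D$ fixed) makes the boundary term $-1$ disappear and yields $\liminf_{L\to\infty} \frac{\widetilde{\#}_v\, \block{\infty}(1)\cdots\block{\infty}(L)}{L}\length{v} \geq \tfrac18$. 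The main obstacle I anticipate is precisely this last balancing act: short prefixes occur very often but contribute little length each, whereas long prefixes (with $\length{v}$ close to $\length{\block{k}}$) occur only about once per block, so neither the trivial bound $D\geq 1$ nor the packing bound $D \geq Md/(2\length{v})$ suffices alone. The device of combining them through $\max \geq$ average is what lets a single clean constant survive both regimes, and the floor and boundary bookkeeping must be carried out carefully so as not to erode it.
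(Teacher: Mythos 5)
Your proof is correct and takes essentially the same route as the paper's: fix the scale $k$ with $\length{\block{k-1}} < \length{v} \leq \length{\block{k}}$, use the prefix/periodic structure of $\block{k}$ to place copies of $v$ at the $\block{k-1}$-boundaries (multiples of $d=\length{\block{k-1}}+1$) inside a single $\block{k}$-block, and then multiply by the number of complete $\block{k}$-blocks among the first $L$ letters, losing a harmless boundary term as $L\to\infty$. The only difference is cosmetic: where the paper selects every $(m+1)$-st boundary occurrence and splits into two cases to get the per-block constant $\tfrac14$, you get the same constant via a greedy packing bound combined with $\max\{1,x\}\geq\tfrac12(1+x)$.
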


\begin{proof}
Let $v$ be a prefix of $\block{\infty}$. Let $K$ be such that $
\length{ \block{K} } < \length{v} \leq \length{ \block{K+1} } $
holds and let $m$ be such that $ m( \length{ \block{K} } + 1 ) \leq
\length{v} < (m+1)( \length{ \block{K} } + 1 ) $ holds. Note that
this implies $ 1 \leq m < n_{K+1} $. First we compute an auxiliary
result. Clearly
\[ \widetilde{\sharp}_{v} \block{K+1} \frac{ \length{v} }{\length{ \block{K+1} } + 1} \geq \left\lfloor \frac{n_{K+1}}{m+1} \right\rfloor \frac{ m( \length{ \block{K} } +1 ) }{n_{K+1}( \length{ \block{K} } +1 )} = \left\lfloor \frac{n_{K+1}}{m+1} \right\rfloor \frac{ m }{n_{K+1} } \]
holds. To see that this term is bounded away from zero, we distinguish
two cases:

\begin{itemize}
\item{If $ \frac{n_{K+1} }{m+1} < 2 $ holds, then we obtain $ \left\lfloor \frac{n_{K+1}}{m+1} \right\rfloor \frac{ m }{n_{K+1} } = 1 \cdot \frac{m}{m+1} \frac{m+1}{n_{K+1}} > \frac{1}{4} $.}
\item{If $ \frac{n_{K+1} }{m+1} \geq 2 $ holds, then we obtain $ \left\lfloor \frac{n_{K+1}}{m+1} \right\rfloor \frac{ m }{n_{K+1} } > \left( \frac{n_{K+1}}{m+1} - 1 \right) \frac{ m }{n_{K+1} } = \big( 1 - \frac{m+1}{n_{K+1}} \big) \frac{m}{m+1} \geq \frac{1}{4} $.}
\end{itemize}
It is now easy to provide the necessary bound on
$\widetilde{\sharp}_v \block{\infty}(1) \ldots \block{\infty}(L)$ from below:
\begin{align*}
\widetilde{\sharp}_{v} \block{\infty}(1) \ldots \block{\infty}(L) \cdot \frac{
\length{v} }{L}
&\geq \widetilde{\sharp}_{v}\block{K+1} \cdot \frac{ \length{v} }{ \length{\block{K+1}} + 1 } \cdot \widetilde{\sharp}_{\block{K+1}} \block{\infty}(1) \ldots \block{\infty}(L) \cdot \frac{ \length{\block{K+1}} + 1 }{L} \\
&> \frac{1}{4} \cdot \left( \frac{ L }{ \length{\block{K+1}} + 1 } - 1 \right) \cdot \frac{ \length{\block{K+1}}+1  }{L} \\
&= \frac{1}{4} \cdot \left(1 - \frac{ \length{\block{K+1}}+1  }{L}
\right)
\end{align*}
For all sufficiently large $L$ we have $ \frac{
\length{\block{K+1}}+1  }{L} \leq \frac{1}{2} $, which yields the
claim.
\end{proof}

We summarize the content of the preceding propositions in the next
theorem.

\begin{theorem}\label{thm-simple-toeplitz-lsc}
Any simple Toeplitz subshift satisfies (LSC). In particular, all
locally constant cocycles over simple Toeplitz subshifts are
uniform.
\end{theorem}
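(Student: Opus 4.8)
The plan is to verify that every simple Toeplitz subshift meets the sufficient conditions assembled in Proposition~\ref{p:sufficient}, and then to conclude by combining that proposition with Theorem~\ref{l:main-technical}. Accordingly, I would first fix the combinatorial data: take the one-sided infinite word $p := \block{\infty}$, set $r := \# \widetilde{\mathcal{A}}$, enumerate the eventual alphabet as $\widetilde{\mathcal{A}} = \{ a^{(1)}, \ldots, a^{(r)} \}$, and let the length-one words be $v^{(j)} := a^{(j)}$. This produces the candidate leading sequences $\omega^{(j)} = (\block{\infty})^R a^{(j)} | \block{\infty}$, and it then remains to check conditions $(\alpha')$, $(\beta')$ and $(\gamma')$ for this choice.

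Next I would establish $(\alpha')$. Since each $a^{(j)}$ occurs as $a_{k+1}$ for arbitrarily large $k$, the words $\block{k} a^{(j)} \block{k}$ lie in the subshift, so each $\omega^{(j)}$ indeed belongs to $\Omega$. Condition $(\alpha)$ is then exactly the content of the Proposition on occurrence of words around the origin: every word of length $L \geq \length{\block{\widetilde{K}}}$ occurs in some $\omega^{(j)}$ within the first $L$ positions to the left of the origin, and the case distinction there (according to whether the word fits inside a single $\block{k+1}$-block or straddles two of them) supplies the required alignment with $\omega^{(j)}(-1)$. Condition $(\gamma')$ is the content of the Proposition on occurrences of three blocks: for each $j$ one chooses an increasing sequence $k_i$ with $a_{k_i} = a^{(j)}$ and sets $w_i := \block{k_i - 1} a_{k_i}$, whereupon the prefix/suffix structure of the palindromic blocks produces the pattern $\ldots w_i w_i | w_i \ldots$ around the origin.

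The remaining condition $(\beta')$ splits into two parts. The limit property for subadditive functions follows from the Proposition showing $\lim_k \frac{F(\block{k})}{\length{\block{k}}} = \overline{F}$, so the blocks $\block{k}$ serve as the required sequence of prefixes; the estimate there decomposes an arbitrary length-$L$ word as $u \star \block{k} \star \cdots \star v$ and controls the $\block{k}$-blocks and the $O(L/\length{\block{k}})$ single-letter separators separately. That $p = \block{\infty}$ satisfies (PQ) is exactly the Proposition on validity of (PQ). I expect this last step to be the main obstacle in spirit: it requires counting how many \emph{mutually disjoint} copies of an arbitrary prefix $v$ fit into a single block $\block{K+1}$, organized through the floor estimate $\lfloor n_{K+1}/(m+1) \rfloor \cdot m / n_{K+1}$ together with a case distinction (whether $n_{K+1}/(m+1)$ is below or above $2$) that bounds this quantity away from zero; the self-similar nesting of the blocks then propagates the bound to all of $\block{\infty}$, yielding the constant $\tfrac{1}{8}$.

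With $(\alpha')$, $(\beta')$ and $(\gamma')$ verified, Proposition~\ref{p:sufficient} gives that every simple Toeplitz subshift satisfies (LSC). Since such subshifts are minimal and uniquely ergodic, Theorem~\ref{l:main-technical} then applies and yields uniformity of every locally constant $\SL$-valued function over the subshift, which is the second assertion of the theorem.
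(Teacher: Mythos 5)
Your proposal is correct and follows essentially the same route as the paper: choosing $p = \block{\infty}$ and $v^{(j)} = a^{(j)}$, verifying $(\alpha')$, $(\beta')$ and $(\gamma')$ via the same block-structure arguments, and concluding through Proposition~\ref{p:sufficient} and Theorem~\ref{l:main-technical}. (Indeed, your citation of Theorem~\ref{l:main-technical} for the uniformity statement is the apt one, since minimality and unique ergodicity of simple Toeplitz subshifts are known from the literature.)
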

\begin{proof} The preceding propositions show that the assumptions
of Proposition \ref{p:sufficient} are satisfied. This proves the
first statement. The last statement is an immediate consequence of
the first statement and Theorem \ref{t:main-abstract}.
\end{proof}

\begin{Remark}[Purely singular continuous spectrum]
When combined with Theorem \ref{t:main-abstract}, the previous
theorem  implies that  the spectrum of an aperiodic  Jacobi operator
associated to a simple Toeplitz subshift is a Cantor set of Lebesgue
measure zero. Thus, the spectrum is singular. Moreover, it can be
shown that, for almost all $ \omega \in \Omega $ with respect to the
unique ergodic probability measure, $H_{\omega}$ does not have
eigenvalues \cite{Sell-PhD}. If $n_{k} \geq 4$ holds for all $k \geq
0$, then the spectrum of $H_{\omega}$ is actually purely singular
continuous for all $ \omega \in \Omega$, that is, no $H_{\omega}$
has eigenvalues (see also Theorem~1.3 in \cite{LiuQu}).
\end{Remark}

As mentioned above, simple Toeplitz subshifts with eventual alphabet
containing at least two letters  are aperiodic. This aperiodicity is
stable under taking (suitable) morphisms. This will be relevant in
the application to Jacobi operators. Specifically, we will need the
following proposition.

\begin{prop}\label{prop:aperiodicity} Let $(\Omega,T)$ be a simple
Toeplitz subshift with eventual alphabet $ \widetilde{\mathcal{A}}$
containing at least two letters. Let $\mathcal{B}$ be an arbitrary
finite set and $\Phi : {\mathcal{A}} \longrightarrow \mathcal{B}$
not constant on $\widetilde{\mathcal{A}}$. Define for $\omega
\in\Omega$ the word $\Phi (\omega)$ in $\mathcal{B}^\ZZ$ via
$$\Phi(\omega)(n) :=\Phi(\omega(n)).$$
Then, $(\Phi(\Omega),T)$ is an aperiodic simple Toeplitz subshift.
\end{prop}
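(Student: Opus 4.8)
The plan is to realize $(\Phi(\Omega),T)$ as the simple Toeplitz subshift whose coding sequences are produced from $(a_k)$ and $(n_k)$ by applying $\Phi$ letter by letter and then merging neighbouring repetitions; aperiodicity will then be read off from the size of the resulting eventual alphabet. First I would note that $\Phi$, acting letter by letter, is continuous and commutes with $T$, so $\Phi(\Omega)$ is a subshift, being the continuous image of the compact invariant set $\Omega$. Writing $q^{(k)} := \Phi(\block{k})$ and $b_k := \Phi(a_k)$ and applying $\Phi$ to the defining recursion $\block{k+1} = \block{k} a_{k+1} \block{k} \hdots \block{k} a_{k+1} \block{k}$, one gets $q^{(k+1)} = q^{(k)} b_{k+1} q^{(k)} \hdots q^{(k)} b_{k+1} q^{(k)}$ with $n_{k+1}$ copies of $q^{(k)}$, i.e. exactly the Toeplitz recursion for the pair $(b_k),(n_k)$, with limit word $q^{(\infty)} = \Phi(\block{\infty})$. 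Since $\Phi$ commutes with passing to subwords, $\mathcal{W}(\Phi(\omega)) = \Phi(\mathcal{W}(\omega)) \subseteq \mathcal{W}(\Phi(\block{\infty})) = \mathcal{W}(q^{(\infty)})$ for all $\omega \in \Omega$, while every word occurring in $q^{(\infty)}$ is the $\Phi$-image of a word occurring in $\block{\infty}$ and hence occurs in $\Phi(\Omega)$. Thus $\Phi(\Omega)$ and the candidate simple Toeplitz subshift have the same language and therefore coincide.

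The step I expect to be the main obstacle is that $(b_k),(n_k)$ need not satisfy the standing requirement $b_{k+1} \neq b_k$, since $\Phi$ may identify neighbouring letters. I would deal with this exactly as in the normalization remarked upon in the definition of simple Toeplitz subshifts: if $b_{k+1} = b_k =: b$, then substituting $q^{(k)} = q^{(k-1)} b q^{(k-1)} \hdots b q^{(k-1)}$ (with $n_k$ copies) into the recursion shows that $q^{(k+1)}$ is just $q^{(k-1)}$ repeated $n_k n_{k+1}$ times with all separators equal to $b$. Carrying this out over each maximal run of equal letters in $(b_k)$ and replacing the corresponding $n_k$'s by their product yields coding sequences $(\tilde a_l),(\tilde n_l)$ with $\tilde a_{l+1} \neq \tilde a_l$ and $\tilde n_l \geq 2$ (a product of integers $\geq 2$ is again $\geq 2$), whose blocks form a subsequence of the $q^{(k)}$ and hence share the same limit word $q^{(\infty)}$. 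Consequently $(\Phi(\Omega),T)$ is a bona fide simple Toeplitz subshift with coding sequences $(\tilde a_l),(\tilde n_l)$.

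Finally I would establish aperiodicity, which by the conventions of this section is equivalent to the eventual alphabet of $(\tilde a_l)$ containing at least two letters. This is where the hypothesis is used: as $\Phi$ is not constant on $\widetilde{\mathcal{A}}$, there are $a,a' \in \widetilde{\mathcal{A}}$ with $\Phi(a) \neq \Phi(a')$, and since $a$ and $a'$ occur infinitely often in $(a_k)$, both $\Phi(a)$ and $\Phi(a')$ occur infinitely often in $(b_k)$. In particular $(b_k)$ is not eventually constant, so every run of equal letters in it is finite; a letter occurring infinitely often in $(b_k)$ must therefore lie in infinitely many runs and so still occurs infinitely often in the merged sequence $(\tilde a_l)$. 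Hence $\Phi(a)$ and $\Phi(a')$ are two distinct letters of the eventual alphabet of $(\tilde a_l)$, which gives aperiodicity and completes the argument.
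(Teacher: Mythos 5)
Your proposal is correct and takes essentially the same approach as the paper: the paper's proof simply asserts that $\Phi(\Omega)$ is a simple Toeplitz subshift with eventual alphabet $\Phi(\widetilde{\mathcal{A}})$, which by hypothesis has at least two letters, and concludes aperiodicity from that. Your argument fills in the details hidden behind the paper's ``Clearly'' --- pushing the block recursion through $\Phi$, identifying the languages, and merging equal consecutive letters of the coding sequence (justified because $(\Phi(a_k))$ is not eventually constant) --- and all of these steps are sound.
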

\begin{proof} Clearly, $\Phi (\Omega)$ is simple Toeplitz with
eventual alphabet $\Phi(\widetilde{\mathcal{A}})$. By assumption on
$\Phi$ this alphabet has at least two elements and the statement
follows.
\end{proof}

\begin{Remark}[Boshernitzan condition and simple Toeplitz subshifts]\label{rem-char-Bosh}
As  mentioned in the introduction of this article not all simple
Toeplitz satisfy Boshernitzan condition. Indeed, an explicit
characterization of the Toeplitz subshifts satisfying this condition
is given in \cite{LiuQu}. In Section \ref{s:family}, it will be seen
that the class of simple Toeplitz subshifts where $n_k$ is a power of two for all $k\in
\NN\cup\{0\}$, will be of particular interest to us. When restricted
to this case one obtains that validity of Boshernitzan condition is
equivalent to existence of a natural number  $C$ and a sequence
$(t_r)$ of natural numbers with $t_r \to\infty$ and
$\{a_{t_r},\ldots, a_{t_r + C}\} = \{a_{s} : s\geq t_r\}$ for all
$r$ (Corollary 6.5 in \cite{Sell_SimpToepCombinat}).
\end{Remark}

\section{Sturmian Subshifts satisfy (LSC)}\label{s:sturmian}
In this section we show that Sturmian subshifts  satisfy (LSC). By
our main results the associated Jacobi operators then have Cantor
spectrum of Lebesgue measure zero. Of course, this is well known, \cite{BIST}, but we include the discussion for completeness.

\bigskip

We will show that Sturmian subshifts satisfy the conditions of
Proposition \ref{p:sufficient}.  Let us first recall how Sturmian
subshifts are defined. Here we freely follow \cite{Ber} (see
\cite{Len4} as well).  Let $\alpha$ be an irrational number with
continued fraction expansion
\begin{equation*} \alpha=[a_1,a_2,\ldots]=\cfrac{1}{a_1+
\cfrac{1}{a_2+ \cfrac{1}{a_3 + \cdots}}}.
\end{equation*}
Define recursively the words $s_n$, $n\in \NN \cup \{ 0 \} $ over the alphabet
$\{0,1\}$ via
\begin{equation*}
s_{-1} := 1, \;\:\;\;\:\; s_0 := 0, \;\:\;\;\:\; s_1 := s_0^{a_1 -
1} s_{-1}, \;\:\;\;\:\; s_n := s_{n-1}^{a_n} s_{n-2}, \;\:\; n \ge
2.
\end{equation*}

Define
$$\mathcal{W}(\alpha):=\cup_{n} \mathcal{W}(s_n)$$ and set
$$\Omega(\alpha):=\{ \omega\in\{0,1\}^\ZZ :
\mathcal{W}(\omega)\subset \mathcal{W}(\alpha)\}.$$ Then,
$\Omega(\alpha)$ is invariant under $T$ and closed, and
$(\Omega(\alpha),T)$ is called the \emph{Sturmian subshift} with
\emph{rotation number} $\alpha$.

There exist palindromes $\pi_n$, $n \ge 2$ with
\begin{equation*}
  s_{2n} = \pi_{2n}10,
\end{equation*}
\begin{equation*}
\label{odd}
  s_{2n+1} = \pi_{2n+1}01.
\end{equation*}
Moreover, clearly  $s_{n-1}$ is a prefix of $s_n$ for $n\geq 2$ and
we can therefore define the 'right limit'
\begin{equation*}
  \label{Rechtsgrenzwert}
c_\alpha := \lim_{n \rightarrow \infty} s_n.
\end{equation*}
Analogously, for  $n\geq 2$ also $s_{n-1}$ is a suffix of $s_{n+1}$.
This gives existence of the 'left-limits'
\begin{equation*}  \label{Linksgrenzwert}
\dalpha := \lim_{n \rightarrow \infty} s_{2n},\hspace{5ex} \ealpha:=
\lim_{n \rightarrow \infty} s_{2n+1}.
\end{equation*}
Define $u_\alpha$ to be the two-sided infinite word which agrees
with $\calpha$ on $\NN$ and with $\dalpha$ on $-\NN\cup\{0\}$ and
define $v_\alpha$ to be the  two-sided infinite word which agrees
with $\calpha$ on $\NN$ and with $\ealpha$ on $-\NN\cup\{0\}$. Then,
both $u_\alpha$ and $\valpha$ belong to $\Omega(\alpha)$.

\begin{lemma} Every Sturmian subshift satisfies (LSC) with $r =2$ and
$\omega^{(1)} = \ualpha$ und $\omega^{(2)} = \valpha$.
\end{lemma}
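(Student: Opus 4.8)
The plan is to verify the three defining conditions $(\alpha)$, $(\beta)$ and $(\gamma)$ of (LSC) directly for $\omega^{(1)}=\ualpha$ and $\omega^{(2)}=\valpha$, the Sturmian subshift being well known to be minimal and uniquely ergodic. The computations run in close parallel to the proof of Proposition~\ref{p:sufficient}, with $\calpha$ playing the role of the one-sided word $p$ and the blocks $s_n$ playing the role of the palindromic blocks $\block{k}$ of the simple Toeplitz setting; the two parities of the index of $s_n$ account for the two leading sequences. Since $\ualpha$ and $\valpha$ are not literally palindromic about the origin, I would match the two growth directions through the palindromicity of the subshift rather than through a formal $p^Rv^{(j)}|p$ decomposition. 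Throughout I would use the recursion $s_n=s_{n-1}^{a_n}s_{n-2}$, the nesting of $s_{n-1}$ as a prefix of $s_n$ and as a suffix of $s_{n+1}$, and the palindrome relations $s_{2n}=\pi_{2n}10$ and $s_{2n+1}=\pi_{2n+1}01$.

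I would settle $(\gamma)$ first, as it is the cleanest. The combinatorial heart is the observation that $\calpha$ begins with $s_ns_n$ for every sufficiently large $n$: since $\calpha$ begins with $s_{n+1}s_n$ and, by the recursion together with the prefix nesting, $s_{n-1}s_n$ itself begins with $s_n$, one unwinds $s_{n+1}s_n=s_n^{a_{n+1}}s_{n-1}s_n$ to see that its prefix of length $2|s_n|$ is $s_ns_n$. By construction $\dalpha$ ends with $s_{2n}$ and $\ealpha$ ends with $s_{2n+1}$, while the positive half of both $\ualpha$ and $\valpha$ is $\calpha$; hence
\[ \ualpha=\ldots s_{2n}\,|\,s_{2n}s_{2n}\ldots \qquad\text{and}\qquad \valpha=\ldots s_{2n+1}\,|\,s_{2n+1}s_{2n+1}\ldots \]
for all large $n$. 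Taking $w_k=s_{2k}$ for $\ualpha$ and $w_k=s_{2k+1}$ for $\valpha$, the combinatorial condition for $(\gamma)$ (the Gordon-type lemma above) applies and yields $(\gamma)$.

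For $(\beta)$ I would appeal to Lemma~\ref{lem:CombCondB} with $p=\calpha$. Two inputs are required. First, that $\calpha$ satisfies (PQ); this is proved exactly as for simple Toeplitz subshifts, decomposing a prefix $v$ with $|s_K|<|v|\le|s_{K+1}|$ against $s_{K+1}=s_K^{a_{K+1}}s_{K-1}$ and bounding below the density of mutually disjoint copies of $v$ by a universal constant. Second, that the blocks realise the optimal growth, $\lim_n F(s_n)/|s_n|=\overline{F}$ for every subadditive $F$; this follows from the same block-decomposition estimate used for the blocks $\block{k}$, since any long factor splits into $s_n$-blocks separated by boundedly many single letters. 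Lemma~\ref{lem:CombCondB}(a) then gives $\lim_{n\to\infty}\log\|A(n,\omega^{(j)})\|/n=\overline{F}$ for the positive direction. For the negative direction, Proposition~\ref{p:nutzlich}(a) rewrites the limit as $\lim_n F^R(\dalpha^R(1)\ldots\dalpha^R(n))/n$ for $j=1$ (and with $\ealpha^R$ for $j=2$), where $F^R(w):=F(w^R)$; the subshift being palindromic, $F^R$ is subadditive with $\overline{F^R}=\overline{F}$, the reversed sequence $\dalpha^R$ again satisfies (PQ), and its prefixes $s_{2n}^R$ realise $\overline{F^R}$, so Lemma~\ref{lem:CombCondB}(a) returns $\overline{F}$ once more. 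Hence all limits in $(\beta)$ exist and share the value $\overline{F}$.

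Finally there is $(\alpha)$, which I expect to be the main obstacle, as it is the step least templated by the simple Toeplitz computation and the one where the value $r=2$ and the even/odd dichotomy are indispensable. I would argue that every sufficiently long factor occurs near the origin of $\ualpha$ or of $\valpha$: given $u\in\mathcal{W}(\Omega)_L$, choose $k$ with $|s_k|<L\le|s_{k+1}|$; the decomposition of an element of $\Omega$ containing $u$ into level-$k$ blocks shows that $u$ sits inside a product of at most two consecutive blocks $s_{k+1}$, and the parity of $k$ decides whether this junction is realised at the seam $s_{2n}|s_{2n}$ of $\ualpha$ or at the seam $s_{2n+1}|s_{2n+1}$ of $\valpha$. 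Aligning the occurrence of $u$ with the corresponding seam (as in the occurrence-around-the-origin argument for simple Toeplitz subshifts) places $u$ in the window indexed by some $k\in\{0,\ldots,L\}$ of the appropriate leading sequence. The delicate point is the bookkeeping of this alignment, namely checking that the two seams together catch all $L+1$ factors of length $L$ with the correct parities; once this is done, $(\alpha)$ holds and, in combination with $(\beta)$ and $(\gamma)$, establishes (LSC).
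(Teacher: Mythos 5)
Your verification of $(\gamma)$ is correct and essentially the paper's (the paper derives the seams $\ldots w_k|w_kw_k\ldots$ from the identity $(s_ns_{n+1})^{*}=(s_{n+1}s_n)^{*}$; your unwinding of $s_{n+1}s_n$ via prefix nesting does the same job), and your plan for $(\alpha)$ is also the paper's, which settles precisely the ``delicate bookkeeping'' you flag by quoting Lemma~6.6 of \cite{Len4}. The genuine gap is in your treatment of $(\beta)$, and it is created by the sentence in which you discard the decomposition $\omega^{(j)}=p^R v^{(j)}|p$. That decomposition is not spoiled by the fact that $\ualpha,\valpha$ fail to be palindromic about the origin; it holds \emph{exactly}: since $s_{2n}=\pi_{2n}10$ and $s_{2n+1}=\pi_{2n+1}01$ with $\pi_n$ palindromes, one has $\dalpha=\calpha^R10$ and $\ealpha=\calpha^R01$, hence $\ualpha=\calpha^R\,10\,|\,\calpha$ and $\valpha=\calpha^R\,01\,|\,\calpha$. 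This identity is what allows the paper (via Proposition~\ref{p:sufficient}) to write the negative-direction limit as $\lim_n F^R\bigl(\calpha(1)\ldots\calpha(n)\bigr)/n$ and to apply Lemma~\ref{lem:CombCondB}(a) to the subadditive function $F^R$ along the \emph{same} word $\calpha$, whose property (PQ) is already available.

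You instead apply Lemma~\ref{lem:CombCondB}(a) along the word $\dalpha^R$ (resp.\ $\ealpha^R$), which requires your unproven assertion that $\dalpha^R$ ``again satisfies (PQ)''. This assertion is false in general. Indeed $\dalpha^R=01\calpha$, so its prefixes of length $|s_{2j}|+2$ are the words $01s_{2j}$. For $j\geq 2$ the block $s_{2j}$ ends in $10$ while $s_{2j-1}$ ends in $01$, and the two return words of $s_{2j}$ in $\calpha$ are $s_{2j}$ and $s_{2j}s_{2j-1}$; hence $01s_{2j}$ occurs in $\calpha$ only immediately after the long return word, i.e.\ once per isolated $s_{2j-1}$-block of the level-$2j$ decomposition, and these occurrences have density at most $1/(a_{2j+1}|s_{2j}|)$. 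Therefore
\[
\liminf_{n\to\infty}\frac{\widetilde{\sharp}_{01s_{2j}}\;\dalpha^R(1)\ldots\dalpha^R(n)}{n}\,\bigl(|s_{2j}|+2\bigr)\;\leq\;\frac{2}{a_{2j+1}},
\]
so if the odd-indexed partial quotients $(a_{2j+1})_j$ are unbounded there is no constant $c>0$ as demanded by (PQ); the same computation with $\ealpha^R=10\calpha$ uses the even-indexed quotients. By Lemma~\ref{lem:CombCondB}(b) this failure even produces a subadditive function along which the limit does not exist, so no argument quantified over all subadditive functions can run along $\dalpha^R$: the existence of the negative-direction limits for the cocycle-induced $F$ is true, but it must be obtained by transporting them to prefixes of $\calpha$, which is exactly what the identity $\dalpha=\calpha^R10$ achieves. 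As it stands, your argument covers at best the Sturmian subshifts with bounded partial quotients, i.e.\ the linearly repetitive ones.
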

\begin{proof}
We show that the assumptions of Proposition \ref{p:sufficient} are
satisfied with $p =c_\alpha$, $v^{(1)} = 01$ and $v^{(2)} = 10$:

\smallskip

\textit{Condition ($\alpha'$) is satisfied:}  As discussed above
$s_n=\pi_n a b$ with $a,b\in\{0,1\}$ for $n\geq 2$. From the
definition of $\ualpha$ and $\valpha$ we then directly  infer that
$\omega^{(1)} = p^R v^{(1)} p$ and $\omega^{(2)} = p^R v^{(2)} p$. So, it remains to show the statement on occurrences of
words around the origin in $\omega^{(1)}$ and $\omega^{(2)}$. This
is not hard to see and is given e.g. in  Lemma 6.6 of \cite{Len4}.
This finishes the proof of  ($\alpha'$).

\smallskip

\textit{Condition ($\beta'$) is satisfied:} This follows from
Theorem 11 of \cite{Len4} (but it is not hard to give a direct
proof).

\smallskip

\textit{Condition $(\gamma')$ is satisfied:} A short computation
shows that $(s_n s_{n+1})^{*} = (s_{n+1} s_n)^*$ for $n\geq 3$,
where the $*$ indicates that the last two letters are removed. From
this we easily see that
$$ \omega^{(1)} = .... s_{2n+1} | s_{2n+1} s_{2n+1} ... \qquad \text{and} \qquad \omega^{(2)} = .... s_{2n} | s_{2n} s_{2n} ... $$
for each $n\geq 1$.
\end{proof}

\section{Spectra of Schreier graphs of spinal groups}\label{s:family}
In this section we will apply our results to an interesting class of
examples that served as an initial motivation to this work. Our
examples are from a seemingly different context, that of finitely
generated groups of automorphisms of rooted trees. These groups came
into light chiefly after the discovery in this class of first
examples of groups on intermediate growth \cite{Gr84}. The
construction provides an uncountable family $\{G_\xi\}$,
$\xi\in\{0,1,2\}^\mathbb N$ of groups of automorphisms of the
infinite binary tree with growth strictly between polynomial and
exponential, if the sequence $\xi$ is not eventually constant. Each
group $G_\xi$ is generated by $4$ involutions.
%$G_\xi = \langle a_\xi, b_\xi, c_\xi, d_\xi\rangle $.
One example that has been particularly well studied and enjoys
additional nice properties, like being generated by a finite
automaton, is the so-called ``first Grigorchuk's group'' $\mathcal G =
\langle a,b,c,d\rangle $ (see e.g. Chap. VIII in \cite{dlH}). In the
family $\{G_\xi\}$ it corresponds to  $\xi =(012)^\infty$.
% This generating set that we denote $S_\omega$  is not minimal, as $ b_\omega, c_\omega, d_\omega =1$ in $G_\omega$.

An action of a group $G$ by automorphisms on a rooted spherically
homogeneous tree $T$  (that we will always assume transitive on the
levels of the tree) extends to an action by homeomorphisms on the
boundary of the tree $\partial T$. If a finite generating set $S$ is
chosen in $G$, then the orbital partition corresponding to the
action of $G$ on $\partial T$ gives rise to a map $\Phi$ from
$\partial T$ to the space of (isomorphism classes of) rooted regular
graphs with edges labelled by elements of $S$. The closure of its
image with isolated points removed is called the \emph{space of (orbital)
Schreier graphs} of $G$ (with respect to $S$) and is denoted
$Sch(G)$. The group acts on it by changing the root. Then, the
dynamical  system $(Sch(G), G)$ is minimal and uniquely ergodic
(with unique invariant probability measure given by  the pushfoward
of the uniform measure on the boundary of the tree under $\Phi$).
Schreier graphs are interesting objects in their own right and serve
as a useful tool in the study of the group. More generally, given a
finitely generated group $G$ and a subset $S\subset G$, a \emph{Schreier
graph} can be defined for any transitive action of $G$ on a set $X$:
the vertex set of the graph is the set $X$ and the set of (labelled
oriented) edges is $\{(x,s\cdot x)|x\in X,s\in S\}$. The graph is
connected if and only if $S$ generates $G$. It is regular of degree
$|S|$. As a particular case, if $S=S^{-1}$ and the action of $G$ on
$X$ is free, we get the Cayley graph of $G$ with respect to $S$.

For a finitely generated group $G$ and a chosen finite symmetric
generating set $S$, the corresponding Cayley graph and Schreier
graphs for natural group actions present in particular an
interesting class of examples in spectral graph theory that
investigates the spectra of Laplacians acting on the $l^2$-space on
the vertices of the graph.  Here, we prefer to consider Markov
operators
$$M=\sum_{s\in S} p_s s$$
with $p_s>0$, $p_s=p_{s^{-1}}$, for all $s\in S$,  and $\sum_{s \in
S} p_s =1$ (or, more generally, $\sum_{s \in S} p_s \leq 1$). The
corresponding Laplacian is then just $1-M$. Clearly, the spectral
type does not change if we add constants to the operator. Thus, we
can deal with Laplacians as well. The question about spectral type
of Schreier graphs and Cayley graphs of finitely generated groups is
in general widely open.

The paper \cite{BG} was one of the first to address the spectral
theory of Schreier graphs of groups acting on rooted trees. It
presents an example of a group whose orbital Scherier graphs for the
action on the boundary of the tree have Cantor spectrum of Lebesgue
measure $0$ and a countable set of points, and another example, the
group $\mathcal G$ mentioned above, where this spectrum is a union
of two disjoint intervals. They only considered the isotropic Markov
operator, i.e., with $p_s = 1/|S|$ for all $s\in S$. Note that, as
the groups in question are amenable, the spectrum as a set is an
invariant of the space of Schreier graphs and does not depend on a
particular orbit.

The construction from \cite{Gr84} has been generalized in a number
of ways. One way, initially suggested in \cite{BS}, leads to the
so-called spinal groups, of which we describe here one particular
construction. For each $d\geq 2, m\geq 1$, we consider an
uncountable family of groups $\{G_\xi\}$ acting by automorphisms on
the infinite $d$-regualr rooted tree $T_d$, with $\xi\in \Xi_{d,m}
\subset Epi(B,A)^\mathbb N$, where $A=\mathbb Z/d\mathbb Z$,
$B=(\mathbb Z/d\mathbb Z)^m$ and $\Xi_{d,m}$ consists of all
infinite sequences of epimorphisms that have trivial intersection of
kernels over any tail.  For $\xi\in \Xi_{d,m}$, the group $G_\xi$ is
generated by the automorphism $a$ that cyclically permutes the
branches at the root of the tree and a copy $B_\xi$ of $B$ in
$\Aut(T_d)$. The action of the elements from  $ B_\xi$ on the tree
can be described as follows: Any $b_\xi \in B_\xi$ acts trivially
everywhere but on the subtrees rooted at the vertices of the rightmost infinite ray in
the tree. In the subtree rooted in the vertex at the $r$-th level of
$T_d$, it acts by permuting the branches at the root of the subtree
as $\xi_r(b)$ (see \cite{NP,GNP} for a more detailed description of
this specific class of spinal groups). For $d=2,m=2$ we recover the
family from \cite{Gr84}. More generally, all of these examples with
$d=2$ are of intermediate growth.

 For $d=2$, the Schreier graphs of a spinal group $G_\xi$ with respect to the generating set $S_\xi=\{a\}\cup B_\xi \setminus \{ \text{id} \}$ have the similar structure: they are lines with loops and multiple edges.
The linear structure of Schreier graphs allows to associate a
subshift to the dynamical system $(Sch(G_\xi), G_\xi)$. It was shown
in \cite{GLN} that the Markov operators on the Schreier graphs
become then unitary equivalent to the Schr\"odinger operators on the
associated subshift. It is also shown there that the subshift
associated with the first example $\mathcal G$ is linearly
repetitive, and hence the Cantor spectrum of Lebesgue measure $0$
theorem for Schr\"odinger operators on linearly repetitive subshifts
applies and yields new information about the Laplacian spectrum on
the orbital Schreier graphs for the action of $\mathcal G$ on the
boundary of $T_2$. Namely, while the periodic potential
corresponds to the isotropic Markov operator whose spectrum was
already known from \cite{BG}, the case of aperiodic potential shows
that the spectrum of the anisotropic Markov operator ($p_b, p_c,
p_d$ not all equal) is a Cantor set of Lebesgue measure $0$. This
has interesting consequences, as it implies  in particular Cantor
spectrum of Lebesgue measure $0$ for the isotropic Markov operator
on the Schreier graph of $\mathcal G$ with the minimal generating
set $\{a,b,c\}$, see \cite{GNP}.

The question arises as to which extent our results hold for other
spinal groups acting on the binary tree. It turns out that very few
groups in the families $\{G_\xi\}$, $\xi\in \Xi_{2,m}$, $m\geq 2$
give rise to  linearly repetitive subshifts. It follows from
\cite{NP} that the set of $\xi\in \Xi_{d,m}$ with linearly
repetitive Schreier graphs is of measure $0$ with respect to the
Bernoulli measure on the set of parameters. The more general
Boshernitzan condition which also would be enough to imply the
Cantor spectrum in the aperiodic case is verified on Schreier graphs
of groups forming a set of measure $1$ in the space of parameters
\cite{NP}, but not all $G_\xi$ satisfy it either (compare Remark
\ref{rem-char-Bosh} as well). Here we prove that for all $m\geq 2$
and all $\xi \in \Xi_{2,m}$, the subshift defined by  Schreier
graphs of $G_\xi$ is simple Toeplitz. The results from the previous
sections then allow us to extend the result from \cite{GLN} and to
deduce the Cantor spectrum of Lebesgue measure $0$ for anisotropic
Markov operators on orbital Schreier graphs of an arbitrary spinal
group $G_\xi, \xi \in \Xi_{2,m}$, $m\geq 2$.

Indeed, consider a new alphabet $\mathcal A = \{a\}\cup\{
\alpha_\phi | \phi\in Epi(B,A)\}$, so that a letter in the alphabet
$\mathcal A$ is either $a$ or represents $B\setminus Ker(\phi)$,
$\phi\in  Epi(B,A)$, a possible set of labels on a multi-edge
between two vertices in the Schreier graph. Consider the one-sided
infinite sequence $\eta$ in this alphabet that we read on the
Schreier graph rooted at the boundary point $1^\infty$, and
associate to $G_\xi$ the corresponding two-sided subshift. We now
observe that, as the infinite, rooted at $1^\infty$ Schreier graph
is the limit, as $n\rightarrow\infty$, of the finite Schreier graphs
on the vertex set of the $n$-th level of  $T_d$ rooted at $1^n$, the
sequence $\eta$ is the limit of the words in the alphabet $\mathcal
A$ read on these finite Schreier graphs starting from the root
$1^n$. The structure of the finite Schreier graphs for the action of
spinal groups on the levels of $T_d$ is well understood and can be
described recursively, see \cite{NP} and also \cite{MB} for the
special case of Grigorchuk's family. Translated in words in
$\mathcal A$, this recursion means that $\eta$ is the limit of words
of the shape
$$p^{(n+1)}=p^{(n)}\alpha_{\xi(n+1)}p^{(n)},$$ with $n>0$ and
$p^{(0)} = a$. Hence, by the definition of a simple Toeplitz subshift as given in Section~\ref{s:simple}, we conclude that our subshift is simple Toeplitz with eventual alphabet
$$\widetilde{\mathcal{A}}_{\xi} = \{ \alpha_\phi \in \mathcal A : \xi_j = \phi \ \text{ for infinitely many } j\in \mathbb N \} .$$
By definition of $\Xi_{2,m}$, for every $ \xi \in \Xi_{2,m}$, the intersection of kernels of the epimorphisms $\xi_{j}$ is trivial along any tail of $\xi$, hence the eventual alphabet consists of at least two letters for every $\xi\in \Xi_{2,m}$, $ m\geq 2$. The subshifts that we associate to the groups $ \{ G_{\xi} \ | \xi \in \Xi_{2,m} \}$, $ m\geq 2$, are therefore all aperiodic. We arrive at the following result.

\begin{theorem}\label{t:spinal-simple-Toeplitz}
The subshift over the alphabet $\mathcal A$ associated with a spinal
group $G_\xi$, $\xi\in \Xi_{2,m}$, is an aperiodic simple Toeplitz subshift.
\end{theorem}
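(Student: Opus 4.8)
The plan is to check the claim directly against the definition of a simple Toeplitz subshift recalled in Section~\ref{s:simple}, and then to rule out periodicity by exhibiting at least two letters in the eventual alphabet. First I would pin down the combinatorial structure of the one-sided sequence $\eta$ read on the Schreier graph rooted at $1^\infty$. The essential input, coming from the recursive description of the finite Schreier graphs of spinal groups on the levels of $T_2$ (see \cite{NP} and \cite{GLN}), is that $\eta$ is the limit of the blocks $\block{n+1} = \block{n}\, \alpha_{\xi(n+1)}\, \block{n}$ with $\block{0} = a$. In particular $\eta = \block{\infty}$ in the notation of Section~\ref{s:simple}, so the two-sided subshift associated with $G_\xi$ is exactly the orbit closure determined by $\block{\infty}$.

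Next I would match this recursion to the simple Toeplitz block construction $\block{k+1} = \block{k} a_{k+1} \block{k} \ldots \block{k}$ by taking all period lengths $n_k = 2$ and coding letters $a_0 = a$, $a_k = \alpha_{\xi(k)}$ for $k \geq 1$; since $\block{n+1} = \block{n}\, \alpha_{\xi(n+1)}\, \block{n}$ is precisely the case of two blocks, the identification is immediate. The only bookkeeping concerns the convention $a_{k+1}\neq a_k$: a run of equal consecutive epimorphisms in $\xi$ is absorbed into a single step, which replaces the corresponding $n_k=2$'s by a power of two, and no issue arises at the bottom level because $a\neq\alpha_\phi$ for every $\phi$. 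Hence the associated subshift is simple Toeplitz with all $n_k$ powers of two.

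It then remains to determine the eventual alphabet and to see that it has at least two elements, which is the only genuinely new verification. By definition it is the set of letters occurring infinitely often in the coding sequence $(a_k)$, that is $\widetilde{\mathcal{A}}_\xi = \{\alpha_\phi : \xi_j = \phi \text{ for infinitely many } j\}$; note that $a$ occurs only once in $(a_k)$ and is therefore absent, even though it recurs in $\eta$. As $\Epi(B,A)$ is finite and $\xi$ is infinite, $\widetilde{\mathcal{A}}_\xi$ is nonempty. Were it a singleton $\{\alpha_{\phi_0}\}$, then only $\phi_0$ would occur along some tail of $\xi$, so the intersection of kernels along that tail would equal $\Ker\phi_0$; for $d=2$ and $m\geq 2$ this kernel has order $2^{m-1}\geq 2$, contradicting the defining property of $\Xi_{2,m}$. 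Thus $\#\widetilde{\mathcal{A}}_\xi\geq 2$, which by the discussion in Section~\ref{s:simple} forces the simple Toeplitz word to be non-periodic, giving aperiodicity.

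I expect the main obstacle to be the first step, namely translating the recursive structure of the finite Schreier graphs into the clean block recursion for $\eta$; once this is in hand, the match to the simple Toeplitz definition is automatic and the aperiodicity check reduces to the short kernel argument coming from the definition of $\Xi_{2,m}$.
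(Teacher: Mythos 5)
Your proposal is correct and follows essentially the same route as the paper: both take the recursive description of the finite Schreier graphs from \cite{NP,GLN} as the key input to obtain the block recursion $\block{n+1}=\block{n}\alpha_{\xi(n+1)}\block{n}$ with $\block{0}=a$, match it to the simple Toeplitz definition of Section~\ref{s:simple}, identify the eventual alphabet as $\{\alpha_\phi : \xi_j=\phi \text{ for infinitely many } j\}$, and derive $\#\widetilde{\mathcal{A}}_\xi\geq 2$ (hence aperiodicity) from the trivial-tail-kernel condition defining $\Xi_{2,m}$. Your additional bookkeeping (absorbing runs of equal epimorphisms into period lengths that are powers of two so as to respect the convention $a_{k+1}\neq a_k$, and the explicit observation that $\Ker\phi_0$ has order $2^{m-1}\geq 2$) only spells out details the paper leaves implicit.
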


This allows us to apply the Theorems~\ref{t:main-abstract} and \ref{thm-simple-toeplitz-lsc} to conclude Cantor spectrum result for anisotropic Markov operators. The anisotropicity of the Markov operator translates into a condition on the weights attached to the letters of the alphabet $\mathcal A$ that describe connections in
the Schreier graph. Namely, to the letter $ \alpha_{\phi} \in \mathcal A $ is attached the weight $ q_{\phi} = \sum_{ b \in B \setminus \Ker(\phi) } p_{b} $, which is positive as we have assumed $ p_{s} > 0 $ for all $ s \in S_{\xi} $. Here is the precise statement.

\begin{coro}\label{c:spectrum-spinal}
Let $G_\xi$ be a spinal group with $\xi\in \Xi_{2,m}$, $m\geq 2$. If
$M$ is a Markov operator on a graph $X\in Sch(G_\xi)$ such that the
numbers $q_\phi$ are not all equal over the essential alphabet $\widetilde{\mathcal{A}}_{\xi}$,
then the spectrum of $M$ is a Cantor set of Lebesgue measure $0$.
\end{coro}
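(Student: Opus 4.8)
The plan is to reduce the statement to Theorem~\ref{t:main-abstract} by realizing the Markov operator $M$ as a Jacobi operator on the simple Toeplitz subshift attached to $G_\xi$. First I would invoke Theorem~\ref{t:spinal-simple-Toeplitz}: the subshift $(\Omega,T)$ over $\mathcal{A}$ associated with $G_\xi$, $\xi\in\Xi_{2,m}$, is an aperiodic simple Toeplitz subshift. By Theorem~\ref{thm-simple-toeplitz-lsc} it therefore satisfies (LSC), and, being simple Toeplitz, it is minimal and uniquely ergodic. Next, as announced in the introduction, the realization of the Laplacians on the Schreier graphs as Jacobi operators carries over unchanged from \cite{GLN}: for the boundary point defining $X$ there is an $\omega\in\Omega$ and locally constant functions $f,g:\Omega\to\RR$, built from the weights $p_a$ and $q_\phi=\sum_{b\in B\setminus\Ker(\phi)}p_b$, so that $M$ is unitarily equivalent, up to an affine change of the spectral parameter, to the Jacobi operator $H_\omega$. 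Since $\mathcal{A}$ is finite and $f,g$ are locally constant, they take only finitely many values, and since $p_s>0$ for all $s\in S_\xi$ we have $q_\phi>0$, whence $f(\omega)\neq 0$ for all $\omega$.

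The decisive point is to verify that $(f,g)$ is not periodic; this is exactly where the hypothesis enters. Here I would apply Proposition~\ref{prop:aperiodicity} to the letter-to-weight map $\Phi:\mathcal{A}\to\RR$ given by $\Phi(\alpha_\phi)=q_\phi$ and $\Phi(a)=p_a$. The assumption that the numbers $q_\phi$ are not all equal over $\widetilde{\mathcal{A}}_{\xi}$ says precisely that $\Phi$ is not constant on the eventual alphabet $\widetilde{\mathcal{A}}_{\xi}$. Proposition~\ref{prop:aperiodicity} then shows that $(\Phi(\Omega),T)$ is again an aperiodic simple Toeplitz subshift, i.e.\ the sequence of weights read along any $\omega$ is not periodic. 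Because the coefficients $f,g$ depend, within a fixed finite window, only on these weights and in turn determine them, periodicity of $(f,g)$ would force periodicity of the weight sequence, contradicting the aperiodicity of $\Phi(\Omega)$. Hence $(f,g):\Omega\to\RR^2$ is not periodic.

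With these facts in hand the conclusion is immediate. Theorem~\ref{t:main-abstract} applies to the minimal, uniquely ergodic, (LSC) subshift $(\Omega,T)$ together with the nonvanishing, finitely valued, non-periodic pair $(f,g)$, and yields that $\Sigma(f,g)$ is a Cantor set of Lebesgue measure zero. Since $\sigma(M)$ coincides with $\Sigma(f,g)$ up to an affine transformation of $\RR$, which preserves both being a Cantor set and having Lebesgue measure zero, the spectrum of $M$ is a Cantor set of Lebesgue measure zero.

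I expect the main obstacle to be the non-periodicity step: although the aperiodicity of the underlying subshift is already available through Theorem~\ref{t:spinal-simple-Toeplitz}, one must check that the passage through the weights $q_\phi$ genuinely transfers aperiodicity to the Jacobi data $(f,g)$, i.e.\ that the operator coefficients see the difference between eventual letters of distinct weight. This is precisely the role of the hypothesis that the $q_\phi$ are not all equal on $\widetilde{\mathcal{A}}_{\xi}$, and it is what rules out the isotropic periodic situation of \cite{BG}. The realization of $M$ as $H_\omega$ is not itself an obstacle, since it is taken over verbatim from \cite{GLN}.
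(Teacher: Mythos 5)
Your proposal is correct and follows essentially the same route as the paper's own proof: Theorem~\ref{t:spinal-simple-Toeplitz} plus Theorem~\ref{thm-simple-toeplitz-lsc} for (LSC), the realization of $M$ as a Jacobi operator taken from \cite{GLN}, Proposition~\ref{prop:aperiodicity} for non-periodicity of $(f,g)$, and finally Theorem~\ref{t:main-abstract}. You merely spell out in more detail the step the paper leaves terse, namely how Proposition~\ref{prop:aperiodicity} applied to the letter-to-weight map $\Phi$ rules out periodicity of $(f,g)$.
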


\begin{proof}
By the previous theorem and the construction of the subshift, the
Markov operators in question can be considered as Jacobi operators
on a simple Toeplitz subshift. By
Theorem~\ref{thm-simple-toeplitz-lsc} simple Toeplitz subshifts
satisfy (LSC). In fact, the Markov operator on the Schreier graph
becomes exactly the Jacobi operator on the subshift with the
function $f$ taking values $p_{a}$ and $q_{\phi}$ over the essential
alphabet $\widetilde{\mathcal{A}}_{\xi}$. Proposition
\ref{prop:aperiodicity} then  ensures, in notations of
Theorem~\ref{t:main-abstract}, that $(f,g)$ is aperiodic and that
$f(\omega) \neq 0$ for all $\omega$, as required in
Theorem~\ref{t:main-abstract}.
\end{proof}

\begin{Remark}[Condition on the weights]
From the proof above we see that the conditions on the weights that
we really need are: $ p_{s} = p_{s^{-1}} $ for all $ s \in S_{\xi}$,
$p_{a} > 0$ as well as  $q_{\phi} > 0 $ and not all equal over the
essential alphabet $\widetilde{\mathcal{A}}_{\xi}$. The positivity
conditions ensure that the weighted graph is
connected\footnote{Should on the other hand $q_\phi=0$ hold for one
of the $\phi$ appearing in $\xi$, the corresponding graphs will be
an infinite union of finitely many finite graphs each. Hence, their
spectrum consists of finitely many eigenvalues (each with infinite
multiplicity).} and the last condition, as mentioned above, is the
anisotropicity of $M$ and is necessary for the aperiodicity of the
Jacobi operator on the corresponding subshift.
\end{Remark}

\begin{Remark}[Spectrum in the isotropic case]
The previous corollary deals with the spectrum of the anisotropic
Markov operator. Note that the spectrum of the isotropic Markov
operator can be computed explicitly. Namely, it is proven in
\cite{GNP} that if $G_\xi$ is a spinal group with $\xi\in
\Xi_{2,m}$, $m\geq 2$, then the spectrum of the isotropic Markov
operator on the orbital Schreier graph for the action on the
boundary of the tree is
\[ \left\lbrack - \frac{1}{2^{m-1}},0 \right\rbrack \cup \left\lbrack 1 - \frac{1}{2^{m-1}}, 1 \right\rbrack. \]
Moreover it coincides with the spectrum of the isotropic Markov
operator on the Cayley graph of this group with respect to the generating set $ S_{\xi} $.
\end{Remark}

\begin{Remark}[Spectrum of the Schreier graph vs spectrum of the Cayley graph]
As noted in the previous remark, for spinal groups $ G_{\xi} $, $
\xi \in \Xi_{2,m} $, the spectrum of the Markov operator on the
orbital Schreier graph for the action on the boundary of the
infinite binary tree coincides with that of the Markov operator on
the Cayley graph, with respect to the spinal generators $ S_{\xi} $.
This result is certainly not true in general, and even for these
groups it is now known if the spectra are the same in the
anisotropic case. Note however that under certain natural conditions
the spectrum of a Schreier graph embeds in the spectrum of the
Cayley graph (see \cite{DG}). More specifically, whenever the group
has intermediated growth part (b) of that theorem directly gives
inclusion of spectra. This inclusion holds also for amenable groups
acting on trees. Indeed, then the infinite Schreier graph $\Gamma$
is a limit of finite Schreier graphs $\Gamma_n$ (corresponding to
levels of the tree), and in the case this infinite graph  is
amenable then its spectrum is the closure of the union of spectra of
graphs $\Gamma_n$ (see \cite{BG}).
%As the  Cayley graph of  G covers
%graphs $\Gamma_n$, the spectra of the $\Gamma_n$ are included in the
%spectrum of $G$ by (a) of Theorem 6 of \cite{DG} and the desired
%inclusion follows.
\end{Remark}


\begin{thebibliography}{10}



\bibitem{TAO}
M.~Baake and U.~Grimm, \textit{Aperiodic Order. Vol.\ $1$: A
Mathematical Invitation}, Cambridge Univ.\ Press, Cambridge (2013).

\bibitem{BMoo} M.~Baake, R. V.~Moody (eds),
\textit{Directions in mathematical quasicrystals},
 CRM Monograph Series \textbf{13},  American Mathematical Society,
Providence, RI, (2000).

\bibitem{BG} L.~Bartholdi and R.~I.~Grigorchuk:
\textit{On the spectrum of Hecke type operators related to some
fractal groups},  Tr. Mat. Inst. Steklova {\bf 231} (2000), Din.
Sist., Avtom. i Beskon. Gruppy, 5--45; translation in {\it Proc.
Steklov Inst. Math.} (2000), no. 4 ({\bf 231}), 1--41.


\bibitem{BS} B.~Bartholdi, Z.~Sunic,
{On the word and period growth of some groups of tree
               automorphisms}, \textit{Comm. Algebra},
     \textbf{29} (2001), 4923--4964.


\bibitem{BP}
S.~Beckus, F.~Pogorzelski, Spectrum of Lebesgue measure zero for
Jacobi matrices of quasicrystals, \textit{ Math. Phys. Anal. Geom.}
\textbf{16} (2013), 289--308.

\bibitem{Ber} J.~Berstel,
On the index of Sturmian words, in:  Jewels are forever, 287--294,
Springer, Berlin, (1999).

\bibitem{BIST} J.~Bellissard, B.~Iochum, E.~Scoppola, D.~Testard, Spectral properties of one-dimensional quasi-crystals. Comm. Math.
Phys.  \textbf{125}  (1989), 527--543.



\bibitem{BJ} J.\ Bourgain and S.\ Jitomirskaya, Continuity of the Lyapunov exponent
for quasiperiodic operators with analytic potential, \textit{ J.\
Statist.\ Phys.} {\bf  108} (2002), 1203--121


%\bibitem{CFKS} H. L. Cycon, R. G. Froese, W. Kirsch, B. Simon,
%\textit{Schr\"odinger Operators with Application to Quantum
%Mechanics and Global Geometry}, Springer, Berlin (1987)


\bibitem{Dam} D.~Damanik, Gordon-type arguments in the spectral theory of one-dimensional
quasicrystals, in: \cite{BMoo}, 277--305.



\bibitem{DL} D.~Damanik, D.~Lenz, A condition of Boshernitzan and uniform convergence in the
multiplicative ergodic theorem, \textit{Duke Math. J.} \textbf{133}
(2006), 95--123.


\bibitem{DEG} D.~Damanik, M.~Embree, A.~Gorodetski,
Spectral properties of Schr\"odinger operators arising in the study
of quasicrystals, in \cite{KLS},   307--370.

\bibitem{DG} A.~Dudko, R.~Grigorchuk,  On the question
{``Can one hear the shape of a group?''} and Hulanicki type theorem
for graphs, preprint 2018, arXiv:1809.04008.

\bibitem{dlH} P.~de la Harpe, \textit{Topics in geometric group
theory}, Chicago lectures in mathematics, University of Chicago
Press (2000).



\bibitem{Downarowicz} Tomasz Downarowicz, Survey of odometers and
{T}oeplitz flows, in {\em Algebraic and topological dynamics},
volume 385 of {\em Contemp. Math.}, pages 7--37. Amer. Math. Soc.,
Providence, RI, 2005.


%\bibitem{Dur} F. Durand, Linearly recurrent subshifts have a finite
%  number of non-periodic subshift factors, {\it Ergod. Th. \&
%    Dynam. Sys.} {\bf 20} (2000), 1061--1078

\bibitem{PFogg} N.~Pytheas Fogg, Substitutions in dynamics, arithmetics and combinatorics, Springer-Verlag, Berlin, 2002. Edited by V. Berth{\'e}, S. Ferenczi, C. Mauduit and A. Siegel.

\bibitem{Fur}
A. Furman, On the multiplicative ergodic theorem for uniquely
ergodic ergodic systems, \textit{Ann. Inst. Henri Poincar\'{e}
Probab. Statist.} {\bf 33} (1997), 797--815

%\bibitem{FK} H. Furstenberg, H. Kesten, Products of random matrices, \textit{A%nn. Math. Stat.} {\bf 31} (1960) 457--469

\bibitem{FW}  B. Weiss, private communication.


\bibitem{Gr80} R.~I.~Grigorchuk: {On Burnside's problem on periodic groups.
(Russian)}, \textit{Funktsional. Anal. i Prilozhen.} \textbf{14}
(1980), 53 -- 54.

\bibitem{Gr84} R.~I.~Grigorchuk: {Degrees of growth of finitely generated groups and the theory of invariant means (Russian)},
\textit{Izv. Akad. Nauk SSSR Ser. Mat.} \textbf{48} (1984),
939--985.





\bibitem{GLN_Survey} R.~Grigorchuk, D.~Lenz, and T.~Nagnibeda, {S}chreier graphs of {G}rigorchuk's group and a subshift associated to a nonprimitive substitution. In {\em Groups, graphs and random walks}, volume 436 of {\em London Math. Soc. Lecture Note Ser.}, pages 250--299. Cambridge Univ. Press, Cambridge, 2017.

\bibitem{GLN} R.~Grigorchuk, D.~Lenz, and T.~Nagnibeda, Spectra of {S}chreier
graphs of {G}rigorchuk's group and {S}chroedinger operators with
aperiodic order. {\em Math. Ann.}, \textbf{370}, 1607--1637, 2018.


\bibitem{GNP} R.~Grigorchuk, T.~Nagnibeda, A.~Perez, Spectra of
Schreier graphs of spinal groups, preprint 2019.

%\bibitem{GLNS} R.~Grigorchuk, D.~Lenz, T.~Nagnibeda, D.~Sell,
%\textit{Group actions with linear Schreier graphs}, in preparation.

\bibitem{GS} R.~Grigorchuk, B.~Simanek, Spectra of Cayley  graphs of  the  Lamplighter  groups and  random Jacobi-Schrodinger operators in the  Anderson
model,  preprint 2019.

\bibitem{Her1}   M.-R.  Herman, Construction d'un  diff\'{e}omorphisme minimal  d'entropie non nulle, \textit{Erg. Th. \& Dynam. Syst} {\bf 1}  (1981), 65--76

\bibitem{Her2}  M.-R. Herman, Une m\'{e}thode pour minorer les exposants de Lyapunov et quelques exemples montrant le caract\`{e}re local d'un  th\'{e}or\`{e}me d'Arnold et de Moser sur le tore de dimension $2$, \textit{Comment. Math. Helv} {\bf 58} (1983), 453--502





%\bibitem{Jit4} S. Jitomirskaya, Metal-insulator transition for the almost-Mathieu operator, \textit{Ann. of Math. (2)} {\bf 150} (1999), 1159--1175



%\bibitem{Jit5}  S. Jitomirskaya, Almost everything about the almost-Mathieu operator, II.\textit{ XIth  International Congress of Mathematical Physics (Paris, 1994),   373--382, } Internat. Press, Cambridge, MA, (1995)

\bibitem{KLS} J.~Kellendonk, D.~Lenz, J.~Savinien (eds), \textit{Mathematics of aperiodic order},
Progress in Mathematics \textbf{309}. Birkh\"auser/Springer, Basel,
(2015).



%\bibitem{Kni1} O. Knill, The upper Lyapunov exponent of ${\rm SL} (2,{R})$ cocycles:  discontinuity and the problem of positivity, \textit{Lyapunov exponents (Oberwolfach, 1990)}, 86--97, Lecture Notes in Math., 1486, Springer, Berlin (1991)




\bibitem{LS} Y.~Last, B.~Simon, Eigenfunctions, transfer matrices, and absolutely
continuous spectrum for one-dimensional Schr\"odinger operators,
\textit{Invent. Math.} {\bf 135} (1999), 329--367.



\bibitem{Len} D.~Lenz, Singular spectrum of Lebesgue measure zero for one-dimensional
quasicrystals, \textit{Commun. Math. Phys.} {\bf 227} (2002),
129--130.

\bibitem{Len2} D. Lenz, Uniform ergodic theorems on subshifts over a finite alphabet, \textit{ Ergod. Th. \& Dynam. Sys.} {\bf 22} (2002), 245--255

 \bibitem{Len3} D. Lenz, Existence of non-uniform cocycles on uniquely ergodic systems. (English, French summary)
\textit{Ann. Inst. H. Poincar\'{e}  Probab. Statist.} \textbf{40},
(2004), 197--206.

\bibitem{Len4} D. Lenz, Ergodic theory and discrete one-dimensional random Schr\"odinger
operators: uniform existence of the Lyapunov exponent,
\textit{Contemporary Mathematics} \textbf{ 327},  (2003), 223--238.

\bibitem{LiuQu} Q.-H. Liu, Y.-H. Qu, Uniform convergence of {S}chr\"odinger cocycles over simple {T}oeplitz
subshift, \textit{Ann. Henri Poincar\'{e}} \textbf{12}  (2011),
153--172.

\bibitem{Lot} M.~Lothaire,
\textit{Combinatorics on words},  Encyclopedia of Mathematics and
its Applications, \textbf{17},  Addison-Wesley Publishing Co.,
Reading, Mass., (1983).

\bibitem{MB} N.~Matte~Bon, {Topological full groups of minimal subshifts with subgroups of
intermediate growth}, \textit{Journal of Modern Dynamics},
     \textbf{9}, (2015), 67--80.


\bibitem{NP} T.~Nagnibeda, A.~Perez, Schreier graphs of spinal groups and
associated subshifts, preprint (2019).


\bibitem{Rue} D. Ruelle, Ergodic theory of differentiable dynamical
  systems, \textit{Inst. Hautes \'Etudes Sci. Publ. Math.} {\bf  50} (1979), 27--58; MR 81f:58031

\bibitem{Sell_SimpToepCombinat}
D.~Sell.
\newblock Combinatorics of one-dimensional simple {T}oeplitz subshifts, to appear in
\textit{Erg. Th. \& Dynam. Syst}, preprint January 2018.


\bibitem{Sell-PhD} D. Sell, \textit{Simple Toeplitz subshifts: Combinatorial properties and uniformity of cocycles}, PhD thesis,  in preparation.

\bibitem{Sut} A.~S\"uto,
Schr\"odinger difference equation with deterministic ergodic
potentials, in: Beyond quasicrystals (Les Houches, 1994), 481--549,
Springer, Berlin, (1995).

\bibitem{Vee} W. A. Veech, Strict ergodicity in zero-dimensional
  dynamical systems and the Kronecker-Weyl theorem modulo 2,
  \textit{Trans. A. M. S.}  {\bf 140 } (1969), 1--33

\bibitem{Wal2} P. Walters,  Unique ergodicity and random matrix products, \textit{Lyapunov exponents (Bremen, 1984)}, 37--55, Lecture Notes in Math., 1186, Springer, Berlin (1986)


\end{thebibliography}
\end{document}